\numberwithin{equation}{section} 
\let\realequation\equation
\def\equation{\setcounter{equation}{\arabic{subsection}}%
   \refstepcounter{subsection}%
   \realequation}
\let\realmultline\multline
\def\multline{\setcounter{equation}{\arabic{subsection}}%
   \refstepcounter{subsection}%
   \realmultline}
\newtheorem{theorem}[subsection]{Theorem}
\newtheorem*{mthm}{Main Theorem}
\newtheorem{corollary}[subsection]{Corollary}
\newtheorem{lemma}[subsection]{Lemma}
\newtheorem{proposition}[subsection]{Proposition}
\theoremstyle{definition}
\newtheorem{definition}[subsection]{Definition}
\newtheorem{remark}[subsection]{Remark}
\newtheorem{example}[subsection]{Example}
\newtheorem{construction}[subsection]{Construction}
\newtheorem{observation}[subsection]{Observation}
\newcommand{\bS}{\mathbb{S}}
\newcommand{\bZ}{\mathbb{Z}}
\newcommand{\cA}{\mathcal{A}}
\newcommand{\cC}{\mathcal{C}}
\newcommand{\cI}{\mathcal{I}}
\newcommand{\cK}{\mathcal{K}}
\newcommand{\cN}{\mathcal{N}}
\newcommand{\cR}{\mathcal{R}}
\newcommand{\cS}{\mathcal{S}}
\newcommand{\ucI}{*}
\DeclareMathOperator{\hocolim}{hocolim}
\DeclareMathOperator{\colim}{colim}
\DeclareMathOperator{\const}{const}
\DeclareMathOperator{\Sing}{Sing}
\newcommand{\ot}{\leftarrow}
\newcommand{\taut}{J}
\newcommand{\tautko}{\mathrm{taut}_{\mathrm{KO}}}
\newcommand{\tautku}{\mathrm{taut}_{\mathrm{KU}}}
\newcommand{\ttt}{\natural} 
\newcommand{\op}{{\mathrm{op}}}
\newcommand{\id}{{\mathrm{id}}}
\newcommand{\tensor}{\otimes}
\newcommand{\sm}{\wedge}
\newcommand{\barsm}{\barwedge}
\newcommand{\iso}{\cong}
\newcommand{\concat}{\sqcup}
\newcommand{\bld}[1]{{\mathbf{#1}}}
\newcommand{\Spsym}[1]{{\mathrm{Sp}^{\Sigma}_{#1}}}
\newcommand{\sset}{\mathrm{sSet}}
\newcommand{\tp}{\mathrm{Top}}
\newcommand{\SpsymR}{{\mathrm{Sp}^{\Sigma}_{\cR}}}
\DeclareMathOperator{\capitalGL}{GL}
\newcommand{\GLoneIof}[1]{\capitalGL^{\cI}_1\!{#1}}
\newcommand{\GLoneof}[1]{\capitalGL_1\!{#1}}
\newcommand{\BGLoneIof}[1]{B^{\boxtimes}(\capitalGL^{\cI}_1\!{#1})}
\newcommand{\BGLoneIcofof}[1]{B^{\boxtimes}((\capitalGL^{\cI}_1\!{#1})^{\mathrm{cof}})}
\newcommand{\BGLoneof}[1]{B\!\capitalGL_1\!{#1}}
\title[Homotopical and operator algebraic twisted \texorpdfstring{$K$}{K}-theory]{Homotopical and operator algebraic\\ twisted \texorpdfstring{$K$}{K}-theory}
\author{Fabian Hebestreit} \address{Mathematical Institute, University
  of Bonn, Germany} \email{f.hebestreit@math.uni-bonn.de}
\author{Steffen Sagave} \address{IMAPP, Radboud University Nijmegen, The Netherlands} \email{s.sagave@math.ru.nl}
\date{\today}
\begin{document}

\begin{abstract}
  Using the framework for multiplicative parametrized homotopy theory
  introduced in joint work with C. Schlichtkrull, we produce a
  multiplicative comparison between the homotopical and operator
  algebraic constructions of twisted $K$-theory, both in the real and
  complex case. We also improve several comparison results about
  twisted $K$-theory of $C^*$-algebras to include multiplicative
  structures. Our results can also be interpreted in the
  $\infty$-categorical setup for parametrized spectra.
\end{abstract}

\keywords{parametrized spectrum, twisted $K$-theory}
\subjclass[2010]{55P43; 55P42, 19L50}

\maketitle
\setcounter{tocdepth}{1} 

\tableofcontents
\section{Introduction}
Twisted homology and cohomology, or (co)homology with local coefficients, was originally invented by Steenrod \cite{Steenrod}. It was designed to capture cohomological invariants that can only be made sense of in ordinary (co)homology under orientation assumptions. Examples include obstruction classes against sections in fiber bundles, fundamental classes of manifolds and Thom classes of vector bundles. Donovan and Karoubi~\cite{Donovan} generalized this to obtain a twisted version of topological $K$-theory. 

In integral (co)homology, a twisting datum is given by a line bundle and thus determined by its first Stiefel--Whitney class or, equivalently, by a map to $K(\bZ/2,1) \simeq \tau_{\leq 1}\mathrm{BO} \simeq \mathrm B(\mathrm{O}/\mathrm{SO})$.  Donovan and Karoubi described twists in $K$-theory using principal $\mathrm P \mathcal U$-bundles, where $\mathrm P \mathcal U$ denotes the projective unitary group of some (infinite dimensional) Hilbert space. By Kuiper's theorem $\mathcal U$ is contractible, and hence principal $\mathrm P \mathcal U$-bundles are classified by their Dixmier--Douady classes in the third integral cohomology. Equivalently, they are encoded by maps to $K(\bZ,3) \simeq \mathrm B(\mathrm{SO} \sslash \mathrm{Spin}^c)$, where $\sslash$ denotes homotopy quotients. The theory can easily be extended to graded Hilbert spaces and to maps into $\mathrm B(\mathrm O \sslash \mathrm{Spin}^c)$. Moreover, there is an analogous theory in the case of real $K$-theory for principal $\mathrm P \mathcal O$-bundles and maps into $K(\bZ/2,2) \simeq \tau_{\leq 2}\mathrm{BSO} \simeq \mathrm B(\mathrm{SO} \sslash \mathrm{Spin})$ and more generally $\tau_{\leq 2}\mathrm{BO} \simeq \mathrm B(\mathrm O \sslash \mathrm{Spin})$ for graded bundles.

While the construction of Donovan and Karoubi relies on the operator algebraic framework for $K$-theory, there are also purely homotopical constructions of twisted $K$-theory. As a motivating example for the relevance of the interplay between these approaches we take the following conjecture of Stolz: By design, any closed $d$-manifold $M$ determines a fundamental class $[M] \in \mathrm{ko}_d(M,\mathrm{or}(M))$ in its twisted, connective $\mathrm{KO}$-homology, where $\mathrm{or}(M) \colon M \rightarrow \tau_{\leq 2}\mathrm{BO}$ records the first two (normal) Stiefel--Whitney classes of $M$. If the universal cover of $M$ admits a spin structure, then the twist $\mathrm{or}(M)$ factors through the classifying map $c \colon M \rightarrow \mathrm{B}\pi_1(M)$. Stolz conjectured that the vanishing of $c_*[M] \in \mathrm{ko}_d(\mathrm{B}\pi_1(M),\mathrm{or}(M))$ is sufficient for the existence of a metric of positive scalar curvature on $M$. This conjecture is intimately tied to the Gromov--Lawson--Rosenberg conjecture. The latter asserts that the vanishing of a certain index class $\mathrm{ind}(M) \in \mathrm{KO}_d(C^*(\pi_1(M),\mathrm{or}(M)))$, which is a well-known necessary condition for the existence of such a metric, is also sufficient (see \cite{Stolz-conc}). The connection between the two conjectures is established via an assembly map $\mathrm{ko}_*(\mathrm{B}G,\tau) \rightarrow \mathrm{KO}_*(\mathrm{B}G,\tau) \rightarrow \mathrm{KO}_*(C^*(G,\tau))$, which takes the class $c_*[M]$ to $\mathrm{ind}(M)$.  This relation therefore relies crucially on the operator algebraic interpretation of $K$-homology and assembly (see e.g. the survey \cite{RoSt} and the references therein). In contrast, one reason why the conjecture of Stolz is useful is that the groups $\mathrm{ko}_*(\mathrm{B}G,\tau)$ are more amenable to computations with the best available tools coming from homotopy theory. In particular, Stolz' conjecture and many cases of the Gromov-Lawson-Rosenberg conjecture are known in the case of vanishing twists, i.e., when $M$ itself admits a spin structure, with largely homotopical proofs.

Let us now outline the homotopical approach to twisted (co)homology theories, and, in particular, twisted $K$-theory. In general, one first builds a topological monoid $\GLoneof{R}$ from a structured ring spectrum $R$. This monoid governs the twists that the (co)homology theory represented by $R$ allows. Its classifying space $\BGLoneof{R}$ comes equipped with a universal parametrized spectrum~$\gamma_R$. Such parametrized spectra give rise to twisted (co)homology theories, and this machinery applied to the real and complex $K$-spectra $\mathrm{KO}$ and $\mathrm{KU}$ provides the homotopy theorist's version of twisted $K$-theory. To simplify the exposition, let us restrict attention to $\mathrm{KO}$. In this case, the space $\BGLoneof{\mathrm{KO}}$ is equivalent to the classifying space of $\{\pm 1\} \times \mathrm{BO}$, a monoid under the tensor product of virtual vector bundles. The construction of Donovan and Karoubi provides a map $\kappa \colon \tau_{\leq 2}\mathrm{BO} \rightarrow \BGLoneof{\mathrm{KO}}$, which allows to interpret any principal $\mathrm P \mathcal O$-bundle as a twist in this more general sense. To compare the operator algebraic and homotopical twisted $K$-theory, it therefore becomes necessary to

\begin{enumerate}[(1)] 
\item determine the map $\kappa \colon \tau_{\leq 2}\mathrm{BO} \rightarrow \BGLoneof{\mathrm{KO}}$ induced by the construction of Donovan and Karoubi in homotopical terms and 
\item identify the twisted cohomology theory  $\kappa^*(\gamma_{\mathrm{KO}})$ associated to the parametrized spectrum with the operator algebraic construction. 
\end{enumerate}

Importing the work of Donovan and Karoubi into modern homotopy theory, one can define a parametrized spectrum $\mathrm{KO} \sslash \mathrm P \mathcal O$ over $\tau_{\leq 2}\mathrm{BO} \simeq \mathrm{BP}\mathcal O$ that represents the operator algebraic version of twisted $K$-theory. Task (2) therefore boils down to showing that $\kappa^* (\gamma_\mathrm{KO})$ coincides with $\mathrm{KO} \sslash \mathrm P \mathcal O$. Concerning task (1), there is well known homotopical candidate for $\kappa$: Employing general constructions for Thom spectra, the $J$-homomorphism $J \colon \mathrm{BO} \rightarrow \BGLoneof{\bS}$ induces a map $\taut_{\mathrm{Spin}} \colon \tau_{\leq 2}\mathrm{BO} \simeq \mathrm{B}(\mathrm{O} \sslash \mathrm{Spin}) \rightarrow \BGLoneof{\mathrm{MSpin}}$ via the unit $\bS \rightarrow \mathrm{MSpin}$.  Its composite with the Atiyah--Bott--Shapiro orientation $\alpha \colon \mathrm{MSpin} \rightarrow \mathrm{KO}$ is a map $\tautko = \BGLoneof{(\alpha)} \circ \taut_{\mathrm{Spin}}$ from $\tau_{\leq 2}\mathrm{BO}$ to $\BGLoneof{\mathrm{KO}}$.

It was recently proven in \cite{AGG-Uniqueness} that when restricted to $\tau_{\leq 2}\mathrm{BSO}$, the map  $\kappa$ from the construction of Donovan and Karoubi and $\tautko$ do indeed agree. This was done by simply classifying all maps $\tau_{\leq 2}\mathrm{BSO} \simeq K(\bZ/2,2) \rightarrow \BGLoneof{\mathrm{KO}}$ up to homotopy, of which there turn out to be only two homotopy classes. However, these methods do not obviously extend to cover the more general source. More importantly, they also do not suffice to identify them as maps that preserve the multiplicative structure we explain next. Given a ring spectrum with an $E_{\infty}$ structure, i.e., a homotopy coherent commutative multiplication, the space $\BGLoneof{R}$ is an $E_{\infty}$ space and $\gamma_R$ is an $E_\infty$ object in parametrized ring spectra. These structures become important when considering for example multiplications on twisted cohomology. They also govern the theory of power operations, an important tool in computations. Both $\kappa$ and $\tautko$ can be refined to $E_{\infty}$ maps. In \cite[Appendix C]{HeJo}, Joachim and the first author sketched an identification of $\kappa$ and $\tautko$ as $E_{\infty}$ maps. However, the setup of that paper did not  allow for a multiplicative comparison of the resulting parametrized spectra.

In the present paper, we use foundational joint work with Schlichtkrull~\cite{HSS-retractive} to provide such a comparison. The main feature of the framework established in~\cite{HSS-retractive} is a point set level implementation of the convolution smash product of parametrized spectra that was previously only available in $\infty$-categorical contexts. By considering natural refinements of $\tautko^*(\gamma_\mathrm{KO})$, $\kappa^*(\gamma_\mathrm{KO})$ and  $\mathrm{KO} \sslash \mathrm P \mathcal O$ to commutative parametrized ring spectra in this setting, we show:

\begin{mthm}
  There is a preferred equivalence in the homotopy category of
  commutative parametrized ring spectra relating the refinements of
  $\mathrm{KO} \sslash \mathrm P \mathcal O$ and
  $\tautko^*(\gamma_\mathrm{KO})$. Consequently, operator algebraic
  and homotopical twisted $K$-theory agree including their
  multiplicative structure.
\end{mthm}
We will give a more precise formulation in Theorem \ref{thm:d} below,
after reviewing the setup of \cite{HSS-retractive}.
 
The base spaces of the parametrized ring spectra appearing in the theorem both come with equivalences to the $E_{\infty}$ space $\tau_{\leq 2}\mathrm{BO}$ obtained by truncating the Whitney sum $E_\infty$ structure on $\mathrm{BO}$. When passing to the base spaces, the equivalence in the theorem specializes to the identification of $\kappa$ and $\tautko$ as $E_{\infty}$ maps discussed above.

The extensive comparison results of~\cite[Section~10]{HSS-retractive} allow us to obtain the statement of the theorem also in the $\infty$-categorical models for parametrized spectra considered in \cite{Ando-B-G-H-R_infinity-Thom,Ando-B-G_parametrized}. To an $E_\infty$-space the authors of these papers associate a symmetric monoidal $\infty$-category of parametrized spectra, which is equivalent to the underlying $\infty$-category of the category of parametrized spectra with the convolution smash product employed in the present paper. Furthermore, under this equivalence commutative parametrized ring spectra such as $\tautko^*(\gamma_{\mathrm{KO}})$ correspond to the $E_\infty$-parametrized ring spectra of the same name. Objects arising from universal constructions such as $\gamma_{\mathrm{KO}}$ and the map  $\tautko$ are particularly amenable to manipulations in the higher categorical setup, and we think of our framework as a useful bridge between the operator algebraic and the $\infty$-categorical world.

\subsection{Parametrized symmetric spectra}
We now outline the definition of the parametrized spectra that we will be using throughout the paper. These are the \emph{symmetric spectra in retractive spaces} developed in a companion paper joint with Schlichtkrull~\cite{HSS-retractive}. 
 
Starting from the category of spaces $\cS$, we first pass to the category of retractive spaces $\cS_\cR$, also called ex-spaces. They consists of pairs of spaces $(E,X)$ with structure maps maps $X \rightarrow E$ and $E \rightarrow X$ that compose to the identity of $X$. We think of $E$ and $X$ as the total and base spaces, respectively. Maps of retractive spaces are allowed to vary both the total and the base space. Retractive spaces admit a model category structure with weak equivalences the maps that are weak homotopy equivalences on both the base and the total space. Furthermore, $\cS_\cR$ comes with a symmetric monoidal structure given by the \emph{fiberwise smash-product}~$\barsm$. It restricts to the cartesian product on base spaces, has $(S^0,*)$ as monoidal unit, and equips $\cS_\cR$ with the structure of a symmetric monoidal model category.

A symmetric spectrum in retractive spaces is then defined to be a sequence of retractive spaces $(E,X)_n$ together with actions of the symmetric groups $\Sigma_n$ and structure maps $  (E,X)_n\barsm (S^1,*)\rightarrow (E,X)_{n+1}$ that are suitably equivariant. We write $\SpsymR$ for the resulting category and simply refer to its objects as parametrized symmetric spectra. By results of Hovey~\cite{Hovey_symmetric-general},  $\SpsymR$ inherits both a symmetric monoidal structure, again denoted by $\barsm$, and a compatible model structure that we refer to as the \emph{local} model structure. The base spaces of a parametrized symmetric spectrum $(E,X)$ assemble into an $\cI$-space $X$, that is,  a functor $\cI \rightarrow \cS$ from the category of finite sets and injections $\cI$. Here the $\Sigma_n$-action on $X_n$ provides functoriality in the endomorphisms of $\bld{n}=\{1,\dots,n\}$, the structure maps give the functoriality in the subset inclusions $\bld{n} \hookrightarrow \bld{n+1}$, and their compatibility implies that this data extends to a functor $\cI \to \cS$.

The category of $\cI$-spaces $\cS^\cI$ is also equipped with a symmetric monoidal Day convolution product $\boxtimes$. It has a compatible model structure with weak equivalences the $\cI$-equivalences, i.e., the maps that induce weak homotopy equivalences when applying homotopy colimits~\cite[\S{}3]{Sagave-S_diagram}. The projection to the base $\pi_b \colon \SpsymR \rightarrow \cS^\cI$ is strong symmetric monoidal for the two products and both left and right Quillen. In particular, it carries local equivalences to $\cI$-equivalences. For an $\cI$-space $X$ one can consider the category $\Spsym{X} = \pi_b^{-1}(X)$ of $X$-relative symmetric spectra. It is the subcategory of $\SpsymR$ whose objects have $X$ as base $\cI$-space and whose morphisms project to the identity on~$X$. Since the base $\cI$-space may not be constant, the levels of these spectra can live in different categories. We show in~\cite[Theorem~1.2]{HSS-retractive} that $\Spsym{X}$ inherits a model category structure from $\SpsymR$ and that conversely the model structure on $\SpsymR$ can be assembled out of the various model categories $\Spsym{X}$ by a Grothendieck construction. In particular, the categories $\Spsym{X}$ are functorial in $X$ via Quillen adjunctions $f_! \colon \Spsym{X} \rightleftarrows \Spsym{Y} \colon f^*$ for maps $f \colon X \rightarrow Y$. These adjoints are important in the formation of twisted (co)homology theories. If $f$ is an $\cI$-equivalence then $(f_!, f^*)$ is a Quillen equivalence. When $X = \const_{\cI}K$ is a constant $\cI$-diagram on a space $K$, the category of $X$-relative symmetric spectra is equivalent to the category of symmetric spectrum objects in spaces over and under $K$ and thus models the usual category of parametrized spectra. 

A central feature of $\cI$-spaces is that they allow to model $E_{\infty}$ spaces by \emph{commutative $\cI$-space monoids}, i.e., by strictly commutative monoids with respect to $\boxtimes$~\cite[Theorem 1.2]{Sagave-S_diagram}. If $M$ is a commutative $\cI$-space monoid, we show that the category $\Spsym{M}$ inherits a well-behaved symmetric monoidal structure, the \emph{convolution smash product}~\cite[Section~4]{HSS-retractive}. To our knowledge, this product is not present in other point-set approaches to parametrized homotopy theory. It is essential for even stating the main results of the present paper including the multiplicative structure.

Finally, we note that the universal $R$-line bundle $\gamma_R$ considered above has a convenient point set level model in parametrized symmetric spectra. To define it, we consider a (positive fibrant) commutative symmetric ring spectrum and let $G$ be a (cofibrant replacement of) of $\GLoneIof{R}$, the commutative $\cI$-space monoid model of the units of $R$ introduced by Schlichtkrull~\cite{Schlichtkrull_units}. Writing  $\bS_t^\cI \colon \cS^\cI \rightarrow \SpsymR$ for the fiberwise suspension spectrum functor with $\bS^\cI_t[X]_n = (X_n \times S^n, X_n)$, we define $\gamma_R$ to be (a fibrant replacement of) the two-sided bar construction $B^\barsm(\bS, \bS_t^\cI[G],R)$ in $\SpsymR$. The base space of $\gamma_R$ is thus the classifying space $B^{\boxtimes}(G) = B^\boxtimes(*,G,*)$ of the units of $R$, which is again a commutative $\cI$-space monoid.  By construction, $\gamma_R$ is a commutative monoid in the category $\Spsym{B^{\boxtimes}(G)}$ under the convolution smash product. It is this object which for $R = \mathrm{KO}$ appears in the main theorem.

\subsection{Actions of cartesian \texorpdfstring{$\cI$}{I}-monoids} In the present paper,  we introduce another construction principle for parametrized symmetric ring spectra that we use to define $\mathrm{KO} \sslash \mathrm P\mathcal O$. To describe it, we say that a \emph{cartesian $\cI$-monoid} is an $\cI$-diagram of simplicial or topological monoids. Equivalently, it is a monoid in $\cS^{\cI}$ with respect to the cartesian product. An action of a cartesian $\cI$-monoid $H$ on a commutative symmetric ring spectrum $R$ is a family of $H(\bld{n})$-actions on $R_n$ compatible with base points and structure maps. Special cases of such actions were already considered by Dadarlat--Pennig~\cite{DP-I} and May--Sigurdsson~\cite[\S{}23.4]{May-S_parametrized}. When $H$ acts on $R$, we can form the \emph{homotopy quotient} $R \sslash H$ with $ (R \sslash H)_n = B^\times(*, H(\bld{n}), R_n)$ where $B^{\times}$ denotes the bar construction with respect to the cartesian product. The base $\cI$-space of $R \sslash H$ is $B^{\times}(H) = B^{\times}(*,H,*)$, the bar construction of $H$ with respect to the cartesian product. 

There is a canonical map $H \boxtimes H \to H \times H$. By restriction along it, a cartesian $\cI$-monoid $H$ can also be viewed as an $\cI$-space monoid, i.e., as a monoid in $\cS^{\cI}$ with respect to the $\boxtimes$-product. We say that a cartesian $\cI$-monoid is \emph{$\cI$-commutative} if its underlying $\cI$-space monoid is commutative. This condition can also be formulated directly in terms of an Eckmann--Hilton condition on $H$, but the salient feature for now is that it is strictly weaker than requiring each $H(\bld{n})$ to be commutative. For example, the orthogonal groups $O(n)$ assemble to an $\cI$-commutative cartesian $\cI$-monoid denoted by $O_{\cI}$. As we will see later, many other examples of interest have this property, in particular the projective orthogonal groups of tensor powers of a Hilbert space. If $H$ is $\cI$-commutative and the action of $H$ on $R$ also satisfies an Eckmann--Hilton condition, then $B^{\times}(H)$ is a commutative $\cI$-space monoid and $R \sslash H$ is a commutative monoid in $\Spsym{B^\times\!(H)}$ under the convolution smash product. Moreover, in this case the action is adjoint to a map of commutative $\cI$-space monoids $\mu^{\flat} \colon H \to \Omega^{\cI}(R)$ first considered in~\cite{DP-I}. It sends $h \in H(\bld{n})$ to the composite $S^n \to R_n \to R_n$ of the unit with the action of $h$. Here $\Omega^{\cI}(R)$ is the commutative $\cI$-space monoid model for the underlying multiplicative $E_{\infty}$ space of $R$ defined on objects by $(\Omega^{\cI}R)(\bld{n}) = \mathrm{Map}(S^n,R_n)$. If the induced multiplication on the Bousfield--Kan homotopy colimit $H_{h\cI} = \hocolim_{\cI}(H)$ is in addition grouplike, we get a map of commutative $\cI$-space monoids $\mu^{\mathrm{cof}}\colon H^{\mathrm{cof}} \to G = (\GLoneIof{R})^{\mathrm{cof}}$ between cofibrant replacements of $H$ and the units of $R$. The induced map of classifying spaces $B^{\boxtimes}(\mu^{\mathrm{cof}}) \colon B^{\boxtimes}(H^{\mathrm{cof}}) \rightarrow B^{\boxtimes}(G)$ allows us to relate $R \sslash H$ to the universal bundle $R$-line bundle~$\gamma_R$. The following statement is the main technical result of the present paper.

\begin{theorem}\label{thm:RsslashH-identification}
Suppose that $H$ and its action on $R$ satisfy the above mentioned commutativity assumptions, that $H_{h\cI}$ is grouplike, and that both $H$ and $R$ are suitably fibrant. Then the comparison map $\iota \colon B^{\boxtimes}(H^{\mathrm{cof}}) \rightarrow B^\times(H)$ is an $\cI$-equivalence. Moreover, the parametrized spectra $B^{\boxtimes}(\mu^{\mathrm{cof}})^* (\gamma_R)$  and $\iota^*(R \sslash H)$ are related by a natural zig-zag of local equivalences of commutative parametrized ring spectra in $\Spsym{B^{\boxtimes}(H^{\mathrm{cof}})}$ with respect to the convolution smash product. 
\end{theorem}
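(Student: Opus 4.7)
My plan is to prove both assertions by constructing a common intermediate commutative parametrized ring spectrum, namely $\widetilde{M} = B^{\barsm}(\bS, \bS_t^{\cI}[H^{\mathrm{cof}}], R)$, the two-sided bar construction in $\SpsymR$ in which $H^{\mathrm{cof}}$ acts on $R$ through $\mu^{\mathrm{cof}} \colon H^{\mathrm{cof}} \to G = (\GLoneIof{R})^{\mathrm{cof}}$ followed by the canonical action of the units. Since $H^{\mathrm{cof}}$ is a commutative $\cI$-space monoid and the action is Eckmann--Hilton, $\widetilde{M}$ is a commutative monoid in $\Spsym{B^{\boxtimes}(H^{\mathrm{cof}})}$ under the convolution smash product, just as $\gamma_R$ is for $G$. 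The zig-zag will then take the shape of a span with $\widetilde{M}$ at its apex, mapping to $B^{\boxtimes}(\mu^{\mathrm{cof}})^*(\gamma_R)$ on one side and to $\iota^*(R \sslash H)$ on the other.

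For the first assertion, I would show that $\iota$ is an $\cI$-equivalence by inspecting the induced map on Bousfield--Kan homotopy colimits. The $\cI$-commutativity of $H$ ensures that the canonical transformation $H \boxtimes H \to H \times H$ becomes an equivalence after applying $(-)_{h\cI}$, since Day convolution models the cartesian product on homotopy colimits; iterating gives an equivalence at every simplicial degree of the bar construction. The grouplike hypothesis on $H_{h\cI}$ then guarantees that geometric realization and $(-)_{h\cI}$ interact so that both $B^{\boxtimes}(H^{\mathrm{cof}})_{h\cI}$ and $B^{\times}(H)_{h\cI}$ compute the classifying space $B(H_{h\cI})$, making $\iota$ an $\cI$-equivalence.

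For the span, the first arm $\widetilde{M} \to B^{\boxtimes}(\mu^{\mathrm{cof}})^*(\gamma_R)$ arises from functoriality of $B^{\barsm}$ along $\mu^{\mathrm{cof}}$; after fibrant replacement it is a local equivalence because $\mu^{\mathrm{cof}}$ is a weak equivalence onto the units of $R$ and the right factor $R$ is unchanged. The second arm $\widetilde{M} \to \iota^*(R \sslash H)$ is assembled from the canonical natural transformation at each simplicial degree $q$ comparing the Day convolution $\bS_t^{\cI}[H^{\mathrm{cof}}]^{\barsm q}$ with the levelwise-cartesian expression $\bld{n} \mapsto \bS_t^{\cI}[H^{\times q}]_n$, combined with the $H(\bld{n})$-action on $R_n$. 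After realization this yields a map of parametrized spectra; it is a local equivalence by the $\cI$-equivalence established for the first assertion, together with the fibrancy assumptions on $H$ and $R$.

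The main obstacle is upgrading this second arm to a morphism of commutative monoids under the convolution smash product. The Eckmann--Hilton hypothesis already makes $R \sslash H$ a commutative monoid in $\Spsym{B^{\times}(H)}$, and $\iota^*$ preserves this structure, but matching the commutative structure on the convolution side with that on $\widetilde{M}$ at the point-set level requires the Day-to-cartesian comparison to be strictly symmetric monoidal across all simplicial degrees and $\cI$-levels. I expect this to be the subtlest point, likely handled via a cofibrant resolution of $H$ that is simultaneously $\boxtimes$-cofibrant and whose underlying $\times$-monoid structure remains levelwise equivalent to $H$; such a resolution should make the natural transformations strictly symmetric monoidal on the nose, producing the required span directly in commutative parametrized ring spectra.
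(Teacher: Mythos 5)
Your overall plan matches the paper's strategy — the apex $\widetilde{M} = B^{\barsm}(\bS, \bS_t^{\cI}[H^{\mathrm{cof}}], R)$ is exactly the middle term of the paper's zig-zag (Theorem~\ref{comparison theorem} and Corollary~\ref{comparison corollary}) — but there is a genuine error in the justification of the left arm of the span. You assert it is a local equivalence ``because $\mu^{\mathrm{cof}}$ is a weak equivalence onto the units of $R$''. This is false: $\mu^{\mathrm{cof}}\colon H^{\mathrm{cof}} \to G = (\GLoneIof{R})^{\mathrm{cof}}$ is merely a cofibrant replacement of the map of commutative $\cI$-space monoids $\mu^\sharp \colon H \to \GLoneIof{R}$; there is no hypothesis forcing $\mu^\sharp$ to be an $\cI$-equivalence, and in the motivating example $H = \mathrm P\mathcal O_\cI$, $R = \mathrm{KO}$ it emphatically is not — the source has only three nonvanishing homotopy groups, while $\GLoneIof{\mathrm{KO}}$ has infinitely many. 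The correct mechanism for that arm is a base-change statement: since $H^{\mathrm{cof}}$ is flat as an $\cI$-space, pulling back the fibrant replacement $\gamma_R = B^{\barsm}(\bS,\bS_t^\cI[G],R)^{\mathrm{fib}}$ along $B^\boxtimes(\mu^{\mathrm{cof}})$ computes the derived pullback, and $B^{\barsm}(\bS, \bS_t^{\cI}[H^{\mathrm{cof}}], R)$ is a cofibrant model for that derived pullback; this is \cite[Proposition~8.5]{HSS-retractive}, and it has nothing to do with $\mu^{\mathrm{cof}}$ being an equivalence.

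Two further remarks. First, your argument for the $\cI$-equivalence of $\iota$ is slightly misattributed: it is not the $\cI$-commutativity of $H$ that makes the Day-to-cartesian comparison $\rho_{H,H}$ an $\cI$-equivalence, but rather a flatness/semistability or positive fibrancy condition on $H$ (this is what Proposition~\ref{prop:homotopical-comparison-product} exploits); $\cI$-commutativity is needed only for the multiplicative structure to exist at all. Second, the ``subtlest point'' you worry about — promoting the Day-to-cartesian comparison to commutative ring maps — does not require any special simultaneous $\boxtimes$- and $\times$-cofibrant resolution of $H$ (which would be problematic, as cofibrant replacements in $\CSJ$ do not carry cartesian monoid structures). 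The map $\rho$ is already lax symmetric monoidal (Lemma~\ref{lem:boxtimes-times-monoidal}), so once $H$ is $\cI$-commutative and the action Eckmann–Hilton, the comparison $B^{\barsm}(\bS, \bS_t^\cI[H],R) \to R \sslash H$ is a map of commutative parametrized ring spectra on the nose (Lemma~\ref{lem:BSIMR-to-RsslashM-commutative}); the cofibrant replacement is precomposed and lives entirely on the $\boxtimes$-side.
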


\subsection{Two applications to twisted \texorpdfstring{$K$}{K}-theory}
As advertised, the first application concerns the two models of twisted $K$-theory. We briefly sketch the constructions in the present language (ignoring cofibrant or fibrant replacements). 

We first consider the homotopical setup. Let $\theta \colon H \rightarrow O_\cI$ be a map of grouplike $\cI$-commutative cartesian $\cI$-monoids into the cartesian $\cI$-monoid formed by the orthogonal groups. Then there is a tautological map
\[\taut_\theta \colon O_\cI \sslash H \longrightarrow \GLoneIof{M\theta},\] 
where $O_{\cI} \sslash H = B^\times(*,H,O_{\cI})$ is a model for the homotopy fiber of $\theta$ and $M\theta$ is the associated Thom spectrum. When $H = \mathrm{Spin}_\cI$, the latter map can be composed with the Atiyah--Bott--Shapiro orientation $\alpha \colon M\mathrm{Spin} \rightarrow \mathrm{KO}$. This provides a map 
\[\tautko \colon B^{\boxtimes}(O_\cI \sslash \mathrm{Spin}_\cI) \longrightarrow \BGLoneIof{\mathrm{KO}}.\]
In the complex case, we obtain $\tautku \colon B^{\boxtimes}(O_\cI \sslash \mathrm{Spin}^c_\cI) \to \BGLoneIof{\mathrm{KU}}$. These maps are known as the inclusion of the lower twists of $K$-theory. As described above, one can pull the bundles $\gamma_{\mathrm{KO}}$ and $\gamma_\mathrm{KU}$ back to obtain commutative parametrized ring spectra over $B^{\boxtimes}(O_\cI \sslash \mathrm{Spin}^{(c)}_\cI)$ representing the homotopical version of twisted $K$-theory. 

Now we switch to the operator algebraic setup. Most good operator algebraic models of $K$-theory spectra admit actions of projective orthogonal (or unitary) groups. We shall use the symmetric spectra $\mathrm{KO}$ and $\mathrm{KU}$ introduced by Joachim~\cite{Jo-coherence}, which also come equipped with a simple model for the Atiyah--Bott--Shapiro orientation. They admit natural Eckmann-Hilton actions by the groups $P\mathcal O_\cI$ or $P\mathcal U_\cI$ given in degree $n$ as the projective orthogonal or unitary group of $L^2(\mathbb R^n)$. This allows us to form the commutative parametrized ring spectra $\mathrm{KO} \sslash P\mathcal O_\cI$ and $\mathrm{KU} \sslash P\mathcal U_\cI$. In Proposition \ref{prop:k-cycles}, we explicitly relate the associated twisted cohomology theory to operator algebraic definitions of twisted $K$-theory using Kasparov's $\mathrm{KK}$-theory including the multiplicative structure. 

We then apply Theorem \ref{thm:RsslashH-identification} to show:

\begin{theorem}\label{thm:d}
There exist preferred zig-zags of local equivalences
\[(\tautko)^*(\gamma_\mathrm{KO}) \simeq \mathrm{KO} \sslash P\mathcal O_\cI \quad \text{and} \quad (\tautku)^*(\gamma_\mathrm{KU}) \simeq \mathrm{KU} \sslash P\mathcal U_\cI\]
of commutative parametrized ring spectra. 
\end{theorem}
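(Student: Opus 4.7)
The plan is to apply Theorem \ref{thm:RsslashH-identification} with $R = \mathrm{KO}$ and $H = P\mathcal O_\cI$, together with its analogue for $R = \mathrm{KU}$ and $H = P\mathcal U_\cI$, and then to match up the resulting base-space comparison with $\tautko$ and $\tautku$. One first verifies the hypotheses of Theorem \ref{thm:RsslashH-identification}: $\cI$-commutativity of $P\mathcal O_\cI$, the Eckmann--Hilton property of its action on Joachim's ring spectrum $\mathrm{KO}$, and grouplikeness of $(P\mathcal O_\cI)_{h\cI}$. Each of these follows from the monoidal identification $L^2(\mathbb R^n) \otimes L^2(\mathbb R^m) \cong L^2(\mathbb R^{n+m})$ that is built into the construction, together with the fact that $B^{\boxtimes}((P\mathcal O_\cI)^{\mathrm{cof}})$ has the homotopy type of $\tau_{\leq 2}\mathrm{BO}$. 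With the hypotheses in place, Theorem \ref{thm:RsslashH-identification} supplies a preferred zig-zag of local equivalences
\[\iota^*(\mathrm{KO} \sslash P\mathcal O_\cI) \simeq B^{\boxtimes}(\mu^{\mathrm{cof}})^*(\gamma_\mathrm{KO})\]
of commutative parametrized ring spectra over $B^{\boxtimes}((P\mathcal O_\cI)^{\mathrm{cof}})$.

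What remains is to identify $B^{\boxtimes}(\mu^{\mathrm{cof}})$ with $\tautko$, up to a natural zig-zag of $\cI$-equivalences relating their sources. I would first construct a map of $\cI$-commutative cartesian $\cI$-monoids $O_\cI \sslash \mathrm{Spin}_\cI \to P\mathcal O_\cI$ as follows: the change-of-variables action of $O(n)$ on $L^2(\mathbb R^n)$ lifts $\mathrm{Spin}(n) \to O(n) \to \mathcal O(L^2(\mathbb R^n))$ through the Clifford-algebraic spinor representation, which becomes null-homotopic after projecting into $P\mathcal O$. Kuiper's theorem renders the ambient $\mathcal O_\cI$ levelwise contractible, forcing the resulting map to be an $\cI$-equivalence upon cofibrant replacement. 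After applying $B^{\boxtimes}$, this provides the desired identification of base $\cI$-spaces. Under this identification, the remaining task is to show that $\mu^{\mathrm{cof}} \simeq \GLoneIof{(\alpha)} \circ \taut_{\mathrm{Spin}_\cI}$ as maps of commutative $\cI$-space monoids into $\GLoneIof{\mathrm{KO}}$: by definition $\mu^{\mathrm{cof}}$ sends $h \in P\mathcal O_\cI(\bld{n})$ to the composite $S^n \to \mathrm{KO}_n \xrightarrow{h\cdot} \mathrm{KO}_n$, while $\GLoneIof{(\alpha)} \circ \taut_{\mathrm{Spin}_\cI}$ is the $J$-homomorphism of $M\mathrm{Spin}$ pushed along ABS, and both encode the same Clifford-theoretic data in Joachim's model.

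The main obstacle is precisely this last identification. On $\pi_*$ it is classical, and an $E_\infty$ sketch was given in \cite{HeJo}, but upgrading it to a strict comparison of commutative $\cI$-space monoids requires care with Joachim's point-set model for ABS and with the cofibrant replacement machinery of \cite{HSS-retractive}. Once this is in place, pulling the zig-zag of Theorem \ref{thm:RsslashH-identification} back along the base $\cI$-equivalence $B^{\boxtimes}(O_\cI \sslash \mathrm{Spin}_\cI) \simeq B^{\boxtimes}((P\mathcal O_\cI)^{\mathrm{cof}})$ yields $(\tautko)^*(\gamma_\mathrm{KO}) \simeq \mathrm{KO} \sslash P\mathcal O_\cI$; the complex case is entirely parallel, replacing $(\mathrm{Spin}, O, \mathrm{KO})$ by $(\mathrm{Spin}^c, U, \mathrm{KU})$ throughout.
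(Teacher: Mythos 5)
Your overall strategy coincides with the paper's: first apply Corollary~\ref{comparison corollary} (the form of Theorem~\ref{thm:RsslashH-identification} used in the text) with $R=\mathrm{KO}$, $H=P\mathcal O_\cI$, to obtain a zig-zag $\mathrm{KO}\sslash P\mathcal O_\cI \simeq B^{\boxtimes}(\mu^{\mathrm{cof}})^* \gamma_{\mathrm{KO}}$; then identify $B^{\boxtimes}(\mu^{\mathrm{cof}})$ with $\tautko$ up to a zig-zag of $\cI$-equivalences. Your first step is correct, modulo minor imprecision in how the hypotheses are checked ($\cI$-commutativity of $P\mathcal O_\cI$ comes from Construction~\ref{cartmon} applied to the monoidal category of graded $C^*$-algebras, not from knowledge of the homotopy type of the bar construction; grouplikeness follows from semistability plus group-valuedness of $P\mathcal O_\cI$).

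The genuine gap is exactly where you flag it: you do not actually supply the identification of $\mu^{\mathrm{cof}}$ (what the paper calls $\kappa$) with $\GLoneIof{(\alpha)}\circ \taut_{\mathrm{Spin}}$, and your sketch of a direct map $O_\cI\sslash\mathrm{Spin}_\cI \to P\mathcal O_\cI$ via a spinor representation into $\mathcal O(L^2(\mathbb R^n))$ does not match the actual comparison data. The key which you are missing, and which is the content of Section~\ref{sec:spin}, is that one should \emph{not} try to relate the two maps into $\GLoneIof{\mathrm{KO}}$ directly, but rather factor through the Thom spectrum: (i) Joachim's $\mathrm{MSpin}$ is realized as $\Theta(\widehat{\mathcal O}^{\mathrm{ev}}_\cI\times_{\mathrm{Spin}_\cI}\bS)$, where $\widehat{\mathcal O}^{\mathrm{ev}}_\cI$ is the group of right Clifford-linear even isometries of $L^2(\mathbb R^\bullet)\otimes \mathrm{Cl}(\mathbb R^\bullet)$ (not $\mathcal O(L^2(\mathbb R^n))$), and the map $j\colon\mathrm{Spin}_\cI\to\widehat{\mathcal O}^{\mathrm{ev}}_\cI$ combines the change-of-variables action with Clifford multiplication; (ii) the ABS orientation $\alpha\colon\mathrm{MSpin}\to\mathrm{KO}$ in Joachim's model is $P\mathcal O_\cI$-equivariant, so $\kappa$ factors as $\BGLoneIof{(\alpha)}\circ B^{\boxtimes}((\mathrm{proj}^\ttt)^{\mathrm{cof}})$; (iii) a levelwise goodness and Kuiper-contractibility argument (Proposition~\ref{prop-good}) shows all verticals in the three-row diagram~\eqref{eq:spin-comparison} are level equivalences, whence $\mathrm{proj}^\ttt$ and $\taut_{\mathrm{Spin}}$ are related by a zig-zag of $\cI$-equivalences (diagram~\eqref{taut vs DK}); and (iv) the observation that $\mathrm{proj}^\ttt = \mu^\sharp$ for the $P\mathcal O_\cI$-action on $\mathrm{MSpin}$, which the paper singles out as the ``heart of the comparison,'' holds by direct inspection of the definitions. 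Your proposal asserts the endpoint of (ii)--(iv) (``both encode the same Clifford-theoretic data in Joachim's model'') without isolating the $\mathrm{MSpin}$ middle term, which is precisely the device that turns a plausible-sounding assertion into an argument. Without it the proof remains incomplete, as you yourself acknowledge.
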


The second application is a generalization of such a multiplicative identification to certain $C^*$-algebras $A$: If $\mathcal K$ denotes the compact operators on some Hilbert space, we show that the action of the automorphisms $A \otimes \mathcal K$ on the $K$-theory spectrum of $A$ can be encoded in the action of a cartesian $\cI$-monoid $\mathrm{Aut}_\cI(\mathcal K,A)$ on Joachim's model $\mathrm K_A$. When $A$ is the base field, this recovers exactly the action considered on $\mathrm{KO}$ and $\mathrm{KU}$ above. We then construct pairings
\[\big[\mathrm K_A \sslash \mathrm{Aut}_\cI(\mathcal K,A)\big] \barsm \big[\mathrm K_B \sslash \mathrm{Aut}_\cI(\mathcal K,B)\big] \longrightarrow \mathrm K_{A \otimes B} \sslash \mathrm{Aut}_\cI(\mathcal K,A \otimes B)\]
and note that these induce the usual products on associated twisted cohomology theories. 

Now, in a series of papers \cite{DP-I,DP-II,DP-III}, Dadarlat and Pennig constructed a commutative symmetric ring spectrum $K_A^\infty$ representing the homotopy type $K_A$, whenever $A$ is strongly selfabsorbing, i.e., when there exists an isomorphism ${A \otimes A} \cong A$ with particularly good properties. Similarly, the homotopy type of $\mathrm{Aut}_\cI(\mathcal K,A)$ is then represented by an $\cI$-commutative cartesian $\cI$-monoid $\mathrm{Aut}^s_{\cI}(A \otimes \cK)$ admitting an Eckmann--Hilton action on $K^\infty_A$. This gives rise to a zig-zag of local equivalences $\mathrm{K}_A\sslash \mathrm{Aut}_\cI(\mathcal K,A) \simeq \mathrm{K}^\infty_A\sslash \mathrm{Aut}^s_{\cI}(A \otimes \cK)$ that matches the exterior pairing from above with the interior one on the right hand side. Using the main theorem of \cite{DP-I} we obtain:

\begin{theorem}\label{thm:e}
For every purely infinite, strongly selfabsorbing $C^*$-algebra $A$, there is a preferred zig-zag $\gamma_{\mathrm{K}^\infty_A} \simeq \mathrm{K}^\infty_A\sslash \mathrm{Aut}^s_{\cI}(A \otimes \cK)$ of commutative parametrized ring spectra. 
In particular, for $\mathcal O_\infty$ the infinite Cuntz algebra, we obtain a preferred zigzag $\gamma_\mathrm{KU} \simeq \mathrm{K}^\infty_{\mathcal O_\infty}\sslash \mathrm{Aut}^s_{\cI}(\mathcal O_\infty \otimes \cK)$
 since $\mathrm{KU} \simeq \mathrm K_{\mathcal O_\infty}$. 
\end{theorem}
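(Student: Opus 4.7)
The strategy is to apply Theorem \ref{thm:RsslashH-identification} with $R = \mathrm{K}^{\infty}_{A}$ and $H = \mathrm{Aut}^{s}_{\cI}(A \otimes \cK)$, using the Eckmann--Hilton action of $H$ on $R$ described in the introduction. The $\cI$-commutativity of $H$ and of its action on $R$ are built into the setup outlined above, and fibrancy of $H$ and $R$ can be arranged by (co)fibrant replacement in the relevant model structures. Grouplikeness of $H_{h\cI}$ is automatic: $H_{h\cI}$ models the space of $^{*}$-automorphisms of $A \otimes \cK$ up to homotopy, so $\pi_{0}(H_{h\cI})$ is a group under composition.

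Applying Theorem \ref{thm:RsslashH-identification} then produces a natural zig-zag of local equivalences of commutative parametrized ring spectra
\[
B^{\boxtimes}(\mu^{\mathrm{cof}})^{*}\bigl(\gamma_{\mathrm{K}^{\infty}_{A}}\bigr) \simeq \iota^{*}\bigl(\mathrm{K}^{\infty}_{A} \sslash \mathrm{Aut}^{s}_{\cI}(A \otimes \cK)\bigr),
\]
where $\iota$ is already asserted to be an $\cI$-equivalence. To deduce the desired zig-zag $\gamma_{\mathrm{K}^{\infty}_{A}} \simeq \mathrm{K}^{\infty}_{A} \sslash \mathrm{Aut}^{s}_{\cI}(A \otimes \cK)$, it suffices to show that $\mu^{\mathrm{cof}} \colon H^{\mathrm{cof}} \to (\GLoneIof{\mathrm{K}^{\infty}_{A}})^{\mathrm{cof}}$ is itself an $\cI$-equivalence. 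For then $B^{\boxtimes}(\mu^{\mathrm{cof}})$ is an $\cI$-equivalence of commutative $\cI$-space monoids, pullback along it is a Quillen equivalence on categories of commutative parametrized ring spectra under the convolution smash product, and consequently $B^{\boxtimes}(\mu^{\mathrm{cof}})^{*}(\gamma_{\mathrm{K}^{\infty}_{A}}) \simeq \gamma_{\mathrm{K}^{\infty}_{A}}$.

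The assertion that $\mu^{\mathrm{cof}}$ is an $\cI$-equivalence is precisely the statement that the space of $^{*}$-automorphisms of $A \otimes \cK$ coincides with the space of units of $\mathrm{K}^{\infty}_{A}$. For purely infinite, strongly selfabsorbing $A$ this is the main theorem of \cite{DP-I}. Unwinding the definition of $\mu^{\flat}$, which sends an automorphism $h \in H(\bld{n})$ to the composite of the unit $S^{n} \to R_{n}$ with the action of $h$ on $R_{n}$, one checks that the Dadarlat--Pennig identification is literally implemented by $\mu^{\mathrm{cof}}$. Combining this with the zig-zag above yields the first part of the theorem, and the $\mathcal{O}_{\infty}$ specialisation is then immediate from the equivalence $\mathrm{KU} \simeq \mathrm{K}_{\mathcal{O}_{\infty}}$ of commutative symmetric ring spectra.

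The main obstacle is this final step: even granting the main theorem of \cite{DP-I} as a black box, one must verify that the equivalence it provides agrees (up to canonical homotopy) with the particular map $\mu^{\mathrm{cof}}$ coming out of the adjunction formalism, rather than being merely \emph{some} equivalence between the two objects. This naturality statement is what makes the resulting zig-zag \emph{preferred} rather than abstract, and it requires carefully tracing through both the construction of $\mu^{\flat}$ and the concrete units map produced in \cite{DP-I}, presumably by routing both through the common intermediary $\Omega^{\cI}(\mathrm{K}^{\infty}_{A})$.
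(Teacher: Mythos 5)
Your proposal matches the paper's proof: both invoke Corollary~\ref{comparison corollary} (the consequence of Theorem~\ref{thm:RsslashH-identification}) and reduce to showing that $\mu^{\flat}\colon \mathrm{Aut}^s_{\cI}(A\otimes\cK)\to\GLoneIof{\mathrm{K}^\infty_A}$ is an $\cI$-equivalence, which is precisely what \cite[Theorem~1.1]{DP-I} provides. The ``main obstacle'' you flag at the end is in fact already resolved earlier in the paper: by the remark following Lemma~\ref{dp-map}, the map Dadarlat and Pennig construct in \cite[Theorem~3.8]{DP-I} is \emph{literally} the adjoint $\mu^{\sharp}$ extracted from the Eckmann--Hilton action, so their theorem applies to the specific map $\mu^{\flat}$ rather than requiring a separate comparison. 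The one ingredient you omit is \cite[Proposition~2.26]{DP-III}, which shows $\mathrm{Aut}^s_{\cI}(A\otimes\cK)$ is well-pointed; the paper needs this so the topological bar construction is good without passing to a singular replacement (cf.\ Lemma~\ref{lem: simp-top}), whereas your phrase ``fibrancy can be arranged by replacement'' glosses over the fact that any such replacement must also be tracked through the identification with $\gamma_{\mathrm{K}^\infty_A}$. Your grouplikeness argument is informal but lands correctly; the precise justification runs through semistability (e.g.\ positive fibrancy) plus the fact that each $\mathrm{Aut}(A^{\otimes n}\otimes\cK^{\otimes n})$ is a group, as in the paper's criterion in Section~2.
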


From here it is immediate that the cycle description of higher twisted $K$-theory given by Pennig in \cite{DP-II} is multiplicative.

\subsection{Organization}
In Section~\ref{sec:I-spaces}, we review $\cI$-spaces and explain how their different products are related. In Section~\ref{sec:parametrized}, we review parametrized symmetric spectra and study the actions of cartesian $\cI$-monoids. Section~\ref{sec:product-compar} provides the homotopical comparison of the products and the proof of Theorem~\ref{thm:RsslashH-identification} as Corollary~\ref{comparison corollary}. In Section~\ref{sec:examples-of-actions} we give examples for actions  of cartesian $\cI$-monoids. Section~\ref{sec:spin} provides results about actions on bordism spectra that we use in the final Section~\ref{finalsec} to prove Theorems~\ref{thm:d} and \ref{thm:e}.

\subsection{Acknowledgments}
The authors would like to thank Emanuele Dotto, Mi\-cha\-el Joachim, Irakli Patchkoria, Christian Schlichtkrull and particularly Ulrich Pennig for useful conversations and an unnamed referee for useful comments on an earlier version of this manuscript. They would also like to thank the Isaac Newton Institute for Mathematical Sciences for support and hospitality during the program ``Homotopy harnessing higher structures". Moreover, the first author held a scholarship of the German Academic Exchange Service during a year at the University of Notre Dame, when initial work on this project was undertaken. 

This work was supported by EPSRC grants EP/K032208/1 and EP/R014604/1 and by the Hausdorff Center for Mathematics at the University of Bonn, DFG GZ 2047/1, project ID 390685813.
\subsection{Notations and conventions}
We work in the categories of compactly generated weak Hausdorff spaces $\tp$ or simplicial sets $\sset$ and write $\cS$ to refer to any of these two categories of spaces as appropriate. 

\section{Review of  \texorpdfstring{$\cI$}{I}-spaces}\label{sec:I-spaces}
We begin to recall basic material about $\cI$-spaces from~\cite[\S{}3]{Sagave-S_diagram} which is central to our model for parametrized spectra. 

\begin{definition}
We write $\cI$ for the category whose objects are the finite sets $\bld{m} = \{1,\dots, m\}$, $m \geq 0$, and whose morphisms are the injections. An \emph{$\cI$-space} $X$ is a covariant space valued functor $X\colon \cI \to \cS$, and we write $\cS^{\cI}$ for the resulting functor category. A map $X \to Y$ in $\cS^{\cI}$ is an \emph{$\cI$-equivalence} if it induces a weak homotopy equivalence $X_{h\cI} \to Y_{h\cI}$ of Bousfield--Kan homotopy colimits. 
\end{definition}
The $\cI$-equivalences participate in an (absolute projective) \emph{$\cI$-model structure} on $\cS^{\cI}$ making $\cS^{\cI}$ Quillen equivalent to spaces~\cite[Theorem 3.3]{Sagave-S_diagram}. A map $X \to Y$ in $\cS^{\cI}$ is a level equivalence (resp. positive level equivalence) if $X(\bld{n})\to Y(\bld{n})$ is a weak homotopy equivalence for all $\bld{n}$ in $\cI$ (resp. all $\bld{n}$ in $\cI$ with $|\bld{n}|\geq 1$). Every (positive) level equivalence is an $\cI$-equivalence, but not vice versa. 

The relevance of $\cI$-spaces comes from the fact that it has an additional monoidal structure not present in spaces, the Day convolution product $X \boxtimes Y$ of
$\cI$-spaces $X$ and $Y$. Using that $\cI$ is symmetric monoidal under
the ordered concatenation of ordered sets
$\bld{m}\concat \bld{n} = \bld{m + n}$, the $\boxtimes$-product is
defined to be the left Kan extension of the $\cI\times \cI$-diagram
$X(-) \times Y(-)$ along the concatenation
$-\concat - \colon \cI \times \cI \to \cI$.  This means that
\[ (X\boxtimes Y)(\bld{n}) \iso \colim_{\bld{k}\concat \bld{l} \to
  \bld{n}} X(\bld{k}) \times Y(\bld{l})\]
where the colimit is taken over the category
$- \concat - \downarrow \bld{n}$. Since $\bld{0}$ is initial in $\cI$, the terminal $\cI$-space $* = \const_{\cI}(*) \iso \cI(\bld{0},-)$ given by the constant $\cI$-diagram on the one point space $*$ is also the unit for $\boxtimes$. Since the twist
isomorphism of the symmetric monoidal structure on $\cI$ is the block
permutation $\chi_{\bld{k},\bld{l}}\colon \bld{k}\concat \bld{l} \to \bld{l}\concat \bld{k}$, also
the twist for $\boxtimes$ takes these permutations into account.

\begin{definition}
  A (commutative) \emph{$\cI$-space monoid} is a (commutative) monoid in
  $(\cS^{\cI},\boxtimes,\ucI)$, and we write $\cC\cS^{\cI}$ for the
  category of commutative $\cI$-space monoids. 
\end{definition}
Unraveling definitions, a commutative $\cI$-space monoid is an $\cI$-space
$M$ together with associative and unital multiplication maps
$M(\bld{k}) \times M(\bld{l}) \to M(\bld{k}\concat \bld{l})$ satisfying
a commutativity condition involving the twist of the $\times$-factors in the source and the map induced by the block permutation  $\bld{k}\concat \bld{l} \to \bld{l}\concat \bld{k}$ in the target.
It is shown in \cite[Theorem 1.2]{Sagave-S_diagram} that up to weak
homotopy equivalence, every $E_{\infty}$ space can be represented by a
commutative $\cI$-space monoid. More precisely, the category
$\cC\cS^{\cI}$ admits a \emph{positive} (projective) $\cI$-model structure with
weak equivalences the $\cI$-equivalences that is related to the category
of $E_{\infty}$ spaces by a chain of Quillen equivalences. The fibrant objects in the positive $\cI$-model structure are called \emph{positive $\cI$-fibrant}. They are characterized by the condition that all maps $\bld{m} \to \bld{n}$ in $\cI$ with $|\bld{m}|\geq 1$ induce weak equivalence between fibrant spaces so that there is no homotopical information in level $0$.

Besides the $\boxtimes$-product, the category of $\cI$-spaces also has a cartesian product $\times$ with unit $*$. 
\begin{definition}
   A \emph{cartesian $\cI$-monoid} is an associative monoid in
   $(\cS^{\cI},\times,\ucI)$ or, equivalently, a functor $\cI
   \rightarrow \cA\cS$ to simplicial or topological monoids.  We will
   denote the category of cartesian $\cI$-monoids by $(\cA\cS)^{\cI}$.
\end{definition}

Every cartesian $\cI$-monoid has an underlying $\cI$-space monoid. To see this, we recall the natural map 
\begin{equation}\label{eq:rhoYX} 
  \rho_{X,Y} \colon X \boxtimes Y  \to X \times Y
\end{equation} from~\cite[Proposition 2.27]{Sagave-S_group-compl}. By definition, it sends a point represented by a tuple $(\alpha \colon \bld{k} \concat \bld{l} \to \bld{n}, x \in X(\bld{k}), y\in Y(\bld{l}))$ to  $((\alpha|_{\bld{k}})_*(x), (\alpha|_{\bld{l}})_*(y)) \in (X \times Y)(\bld{n})$. 
\begin{lemma}\label{lem:boxtimes-times-monoidal}
  The natural map $\rho_{X,Y}$ equips the identity with the
  structure of a lax symmetric monoidal functor $(\cS^{\cI},\boxtimes)
  \to (\cS^{\cI},\times)$.
\end{lemma}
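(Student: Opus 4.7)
The plan is to verify the three axioms for a lax symmetric monoidal structure directly from the explicit description of $\rho_{X,Y}$, using the universal property of $\boxtimes$ as a left Kan extension to reduce each check to the level of representatives $(\alpha\colon \bld{k}\concat\bld{l} \to \bld{n}, x, y)$. Naturality of $\rho_{X,Y}$ in both variables is immediate, since applying maps $X\to X'$ and $Y\to Y'$ commutes with restriction along the subset inclusions $\bld{k}\hookrightarrow \bld{k}\concat\bld{l}$ and $\bld{l}\hookrightarrow \bld{k}\concat\bld{l}$.

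For the unit constraint, observe that both $\boxtimes$ and $\times$ share the unit $\ucI = \const_\cI(*)$, so the structure map $\ucI \to \ucI$ is forced to be the identity. The required coherence then amounts to checking that the canonical unitor $\ucI \boxtimes Y \to Y$ given by $(\alpha\colon \bld{0}\concat \bld{l}\to \bld{n}, *, y)\mapsto \alpha_* y$ agrees after composition with $\rho_{\ucI,Y}$ with the canonical unitor for the cartesian product; both send the class of $(\alpha, *, y)$ to $(\alpha|_{\bld{l}})_* y \in Y(\bld{n})$, and a symmetric argument handles the right unitor.

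For associativity and symmetry I would unpack both composites on a representative. Given $(\beta\colon (\bld{k}\concat\bld{l})\concat \bld{m}\to \bld{n}, x, y, z)$ in $(X\boxtimes Y)\boxtimes Z$, the composite through $(X\times Y)\boxtimes Z$ sends it to
\[ \bigl( (\beta|_{\bld{k}})_* x,\ (\beta|_{\bld{l}})_* y,\ (\beta|_{\bld{m}})_* z \bigr), \]
and routing through $X\boxtimes(Y\boxtimes Z)$ yields the same triple because the associator of $\boxtimes$ only reassociates $\bld{k}\concat\bld{l}\concat \bld{m}$ without permuting it, while the associator of $\times$ is strict. For the symmetry axiom, the twist on $\boxtimes$ replaces $\alpha$ by $\alpha\circ \chi_{\bld{l},\bld{k}}$ and swaps $x, y$; since the block permutation carries $\bld{l}\subset \bld{l}\concat\bld{k}$ identically onto $\bld{l}\subset \bld{k}\concat\bld{l}$ and similarly for $\bld{k}$, the composite $\rho_{Y,X}\circ \tau^{\boxtimes}_{X,Y}$ produces $((\alpha|_{\bld{l}})_* y, (\alpha|_{\bld{k}})_* x)$, which is precisely $\tau^{\times}_{X,Y}\circ \rho_{X,Y}$ applied to the original representative.

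The only genuine care needed is in the symmetry check, where one must track the effect of the block permutation $\chi_{\bld{k},\bld{l}}$ on the restrictions of $\alpha$; this is the main (though mild) obstacle, and once dispatched the associativity pentagon and unit triangles follow by the same representative-level bookkeeping.
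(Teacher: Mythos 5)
Your proof is correct and follows exactly the same route as the paper's: check the unit, associativity, and symmetry axioms on representatives using the explicit formula for $\rho_{X,Y}$, with the symmetry check (tracking the block permutation $\chi$) being the only place where anything nontrivial happens. The paper's proof dispatches the unit and associativity coherences in one sentence each as "immediate"; you spell them out, which is a fair amount of extra bookkeeping but introduces no new ideas.
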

\begin{proof}
  Since $\ucI$ serves as a unit both for $\boxtimes$ and $\times$, the
  identity provides the structure map relating the respective
  units. The compatibility of $\rho_{X,Y}$ with the associativity is
  immediate from the definition.  Compatibility with the symmetry
  isomorphism for the $\boxtimes$-product follows since the latter
  sends the point represented by $\alpha\colon \bld{k}\concat \bld{l}
  \to \bld{n}$, $x \in X(\bld{k})$ and $y\in Y(\bld{l})$ to the point
  represented by $(\alpha \circ \chi_{\bld{k},\bld{l}}, y, x)$.
\end{proof}

\begin{corollary}
Every cartesian $\cI$-monoid $M$ has an underlying $\cI$-space monoid with multiplication 
$M \boxtimes M \xrightarrow{\rho_{M,M}} M \times M \xrightarrow{\mu} M$.\qed
\end{corollary}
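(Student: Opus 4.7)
The statement is essentially a formal consequence of Lemma \ref{lem:boxtimes-times-monoidal}, and the plan is simply to invoke the general principle that a lax monoidal functor sends monoids to monoids, applied to the identity functor with the lax monoidal structure supplied by $\rho$.

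More concretely, suppose $(M, \mu, \eta)$ is a monoid in $(\cS^{\cI}, \times, \ucI)$. To equip $M$ with the structure of an $\cI$-space monoid I need a multiplication $\tilde{\mu} \colon M \boxtimes M \to M$ and a unit $\tilde{\eta} \colon \ucI \to M$ satisfying associativity and unitality for $\boxtimes$. Take $\tilde{\mu} = \mu \circ \rho_{M,M}$ as specified in the statement, and take $\tilde{\eta} = \eta$, using that $\ucI$ serves as unit for both $\boxtimes$ and $\times$ (so that the lax monoidal structure map on units is the identity, as noted in the proof of Lemma \ref{lem:boxtimes-times-monoidal}).

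The associativity diagram for $\tilde{\mu}$ reduces, by naturality of $\rho$ and the fact that $\rho$ is compatible with associators (as just verified in the lemma), to the associativity diagram for $\mu$ with respect to $\times$, which holds by assumption. Similarly, the left and right unit diagrams for $\tilde{\mu}$ follow from the corresponding cartesian ones via the compatibility of $\rho$ with units. There is no real obstacle here: the entire content is that $\rho$ was already shown to be lax symmetric monoidal, so the forgetful operation $\mathrm{Mon}(\cS^{\cI}, \times) \to \mathrm{Mon}(\cS^{\cI}, \boxtimes)$ is induced by a lax monoidal functor and is therefore well-defined on objects.
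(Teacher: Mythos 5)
Your proposal is correct and matches the paper's intent exactly: the corollary is stated immediately after Lemma~\ref{lem:boxtimes-times-monoidal} with no further proof, precisely because it is the standard consequence that a lax (symmetric) monoidal functor carries monoids to monoids, applied to the identity with lax structure $\rho$. Your spelled-out verification of associativity and unitality is the intended reasoning.
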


The notion of a \emph{commutative} cartesian $\cI$-monoid $M$ is, however, too restrictive for our purposes. It requires all monoids $M(\bld{k})$ to be commutative, which implies for example that their identity components are generalized Eilenberg--Mac\ Lane spaces. In contrast, a commutative $\cI$-space monoid $M$ does not provide monoid structures on the spaces $M(\bld{n})$ unless $\bld{n} = \bld{0}$. Therefore, commutativity with respect to $\boxtimes$ is a less rigid notion.
\begin{definition}\label{i-comm}
  A cartesian $\cI$-monoid $M$ is \emph{$\cI$-commutative} or \emph{Eckmann--Hilton} if its underlying $\cI$-space monoid is commutative. 
\end{definition}

The following lemma follows directly from the definitions:

\begin{lemma}\label{lem:I-commutative}
Let $M$ be a cartesian $\cI$-monoid. The following are equivalent: 
\begin{enumerate}[(i)]
\item $M$ is $\cI$-commutative. 
\item For all injections $\alpha\colon \bld{k} \concat \bld{l} \rightarrow \bld{n}$, the following diagram commutes:
\[\xymatrix@-1pc{ M(\bld{k}) \times M(\bld{l}) \ar[rrr]^{(\alpha|_{\bld{k}})_* \times (\alpha|_{\bld{l}})_*} \ar[d]_{\mathrm{twist}} & && M(\bld{n}) \times M(\bld{n}) \ar[rrd]^{\mu} & & \\
M(\bld{l})  \times M(\bld{k}) \ar[rrr]^{(\alpha|_{\bld{l}})_* \times (\alpha|_{\bld{k}})_*} & & & M(\bld{n}) \times M(\bld{n}) \ar[rr]^{\mu} & & M(\bld{n})}\]
\item For all injections $\alpha\colon \bld{k} \concat \bld{l} \rightarrow \bld{n}$, the map
\[M(\bld{k}) \times M(\bld{l}) \xrightarrow{(\alpha|_{\bld{k}})_* \times (\alpha|_{\bld{l}})_*} M(\bld{n}) \times M(\bld{n}) \xrightarrow{\mu}  M(\bld{n})\]
is a monoid homomorphism.\qed
\end{enumerate}
\end{lemma}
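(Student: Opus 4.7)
The plan is to unpack the definition of commutativity for the underlying $\cI$-space monoid and see that it translates directly into (ii); then (iii) will follow by an elementary manipulation.

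First I would appeal to the universal property of the colimit defining $\boxtimes$: any map out of $M \boxtimes M$ is determined by its $\alpha$-components $M(\bld{k}) \times M(\bld{l}) \to M(\bld{n})$ ranging over injections $\alpha \colon \bld{k}\concat \bld{l} \to \bld{n}$, and two such maps coincide iff all their $\alpha$-components do. Unwinding the definition of $\rho_{M,M}$, the $\alpha$-component of $\mu \circ \rho_{M,M}$ sends $(x,y)$ to $\mu((\alpha|_{\bld{k}})_*(x),(\alpha|_{\bld{l}})_*(y))$. By the formula for the symmetry $\tau^{\boxtimes}$ of $\boxtimes$ recalled in the proof of Lemma~\ref{lem:boxtimes-times-monoidal} — namely that a point represented by $(\alpha,x,y)$ is sent to the point represented by $(\alpha \circ \chi_{\bld{k},\bld{l}},y,x)$ — the $\alpha$-component of $\mu \circ \rho_{M,M} \circ \tau^{\boxtimes}$ computes as $(x,y) \mapsto \mu((\alpha|_{\bld{l}})_*(y),(\alpha|_{\bld{k}})_*(x))$, since restricting $\alpha \circ \chi_{\bld{k},\bld{l}} \colon \bld{l}\concat \bld{k} \to \bld{n}$ to its two blocks recovers $\alpha|_{\bld{l}}$ and $\alpha|_{\bld{k}}$ respectively. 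Commutativity of the underlying $\cI$-space monoid is the equation $\mu \circ \rho_{M,M} = \mu \circ \rho_{M,M} \circ \tau^{\boxtimes}$, and by the observation above this holds if and only if all $\alpha$-components agree, which is exactly the commuting diagram in (ii). This establishes (i) $\Leftrightarrow$ (ii).

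For (ii) $\Leftrightarrow$ (iii), write $\phi \colon M(\bld{k}) \times M(\bld{l}) \to M(\bld{n})$ for the map in (iii). Since the restrictions $(\alpha|_{\bld{k}})_*$ and $(\alpha|_{\bld{l}})_*$ are monoid homomorphisms and $\mu$ is unital, $\phi$ automatically preserves the unit. Setting $a_i = (\alpha|_{\bld{k}})_*(x_i)$ and $b_i = (\alpha|_{\bld{l}})_*(y_i)$, multiplicativity of $\phi$ on the componentwise product becomes the identity $a_1 a_2 b_1 b_2 = a_1 b_1 a_2 b_2$ in $M(\bld{n})$ for all $x_1,x_2,y_1,y_2$. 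Specializing to $x_1 = e_{\bld{k}}$ and $y_2 = e_{\bld{l}}$ yields $a_2 b_1 = b_1 a_2$, which is precisely (ii); conversely (ii) trivially implies the general identity.

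There is no real obstacle here beyond careful bookkeeping. The one point that merits attention is the apparent asymmetry of the colimit formula for $\boxtimes$ and of $\rho_{X,Y}$ in the two factors: it is exactly the interaction of this asymmetry with the block permutation $\chi_{\bld{k},\bld{l}}$ that produces the nontrivial commutativity condition in (ii), rather than a pointwise commutativity condition on each $M(\bld{n})$ (which would be the much stronger notion of a commutative cartesian $\cI$-monoid explicitly excluded in the discussion preceding the lemma).
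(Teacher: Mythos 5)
Your proof is correct, and it follows the same route the paper intends: the paper gives no written argument, stating only that the lemma ``follows directly from the definitions,'' and your writeup is a careful, accurate elaboration of exactly that definitional unwinding (identifying $\alpha$-components of maps out of $M\boxtimes M$, tracking the block permutation through $\rho_{M,M}$, and the Eckmann--Hilton specialization for (ii) $\Leftrightarrow$ (iii)).
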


\begin{remark}
In~\cite[Definition 3.1]{DP-I}, Dadarlat and Pennig introduce \emph{Eckmann--Hilton $\cI$-groups} which are a special case of our $\cI$-commutative cartesian $\cI$-monoids.
They view the $\cI$-space monoid structure as additional data (rather than deriving it from the cartesian $\cI$-monoid structure) and require the $M(\bld{k})$ to be groups (rather than monoids). Schwede~\cite[Definition 2.3.4]{schwede-global} studies \emph{symmetric monoid-valued orthogonal spaces} which are the orthogonal counterparts of $\cI$-commutative cartesian $\cI$-monoids and therefore provide examples of the latter.  
\end{remark}
We write $B^{\boxtimes}(M) = |[q] \mapsto M^{\boxtimes q} |$ for the bar construction of an $\cI$-space monoid with respect to $\boxtimes$ and
$B^{\times}(M) = |[q]\mapsto M^{\times q} |$ for the bar construction of a cartesian $\cI$-monoid with respect to $\times$.  The discussion before~\cite[Proposition 4.2]{Sagave-S_group-compl} provides a comparison map
\begin{equation}\label{eq:c_M}
c_M \colon B^{\boxtimes}(M)  \to B^{\times}(M)
\end{equation}
which is the realization of the map of simplicial objects given in each 
simplicial degree by the map 
$M\boxtimes \dots \boxtimes M \to M \times \dots  \times M$ induced by~\eqref{eq:rhoYX}.

\begin{lemma}\label{lem:commutative-bar-constr-comp-for-M}
  Let $M$ be an $\cI$-commutative cartesian $\cI$-monoid. Then the 
  bar constructions $B^{\times}(M)$ and $B^{\boxtimes}(M)$ inherit the structure
  of a commutative $\cI$-space monoid, and
  $c_M \colon B^{\boxtimes}(M) \to B^{\times}(M)$ is a map of commutative
  $\cI$-space monoids. 
\end{lemma}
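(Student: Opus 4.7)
The plan is to lift both simplicial $\cI$-spaces $[q]\mapsto M^{\boxtimes q}$ and $[q]\mapsto M^{\times q}$ underlying the two bar constructions to simplicial commutative monoids in $(\cS^{\cI},\boxtimes)$ in such a way that the levelwise iterated map $\rho^{(q)}\colon M^{\boxtimes q}\to M^{\times q}$ defining $c_M$ is a simplicial map of such. Since $\boxtimes$ preserves geometric realization in each variable, passing to realizations will then deliver $B^{\boxtimes}(M)$ and $B^{\times}(M)$ as commutative $\cI$-space monoids and $c_M$ as a map of such.

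For $B^{\boxtimes}(M)$ this is formal. By Definition~\ref{i-comm}, $\cI$-commutativity of $M$ says precisely that the underlying $\boxtimes$-multiplication $\nu = \mu\circ \rho_{M,M}$ is commutative, and in any symmetric monoidal category the bar construction of a commutative monoid carries a simplicial commutative monoid structure via the standard shuffle-then-multiply maps $M^{\boxtimes p}\boxtimes M^{\boxtimes p}\to M^{\boxtimes p}$.

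For $B^{\times}(M)$ I equip each $M^{\times q}$ with the componentwise $\boxtimes$-multiplication: by the universal property of $\times$ it is the unique map $M^{\times q}\boxtimes M^{\times q}\to M^{\times q}$ whose $i$-th projection is $M^{\times q}\boxtimes M^{\times q}\xrightarrow{p_i\boxtimes p_i} M\boxtimes M\xrightarrow{\nu} M$. Associativity, unitality and commutativity transfer from $\nu$. The degeneracies and the two outer faces of $B^{\times}(M)$ are built from unit insertions and projections and are visibly $\boxtimes$-monoid maps; the essential step is to verify that every inner face map, which applies the cartesian multiplication $\mu\colon M\times M\to M$ to two adjacent factors, is a $\boxtimes$-monoid homomorphism. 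Tracing through the definition of $\nu$ and of the componentwise $\boxtimes$-structure on $M\times M$, this reduces at a point of $(M\times M)\boxtimes(M\times M)$ represented by a tuple $(\alpha\colon \bld k\concat \bld l\to \bld n,\,(a_1,a_2),\,(b_1,b_2))$ to the identity
\[
\mu\bigl(\mu(A_1,A_2),\,\mu(B_1,B_2)\bigr)\;=\;\mu\bigl(\mu(A_1,B_1),\,\mu(A_2,B_2)\bigr)
\]
in $M(\bld n)$, where $A_i=(\alpha|_{\bld k})_\ast(a_i)$ and $B_i=(\alpha|_{\bld l})_\ast(b_i)$. Since the images of $\alpha|_{\bld k}$ and $\alpha|_{\bld l}$ are disjoint, the symmetry axiom $\nu=\nu\circ\tau_{M,M}$ provided by $\cI$-commutativity unravels to the commutation relation $\mu(A_2,B_1)=\mu(B_1,A_2)$, and combining this with the associativity of $\mu$ on $M(\bld n)$ yields the displayed identity.

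For the comparison map $c_M$, the levelwise components $\rho^{(q)}\colon M^{\boxtimes q}\to M^{\times q}$ are morphisms of commutative $\boxtimes$-monoids: this is a formal consequence of Lemma~\ref{lem:boxtimes-times-monoidal}, since lax symmetric monoidal transformations send monoid multiplications to monoid multiplications, applied to the iterated instance of $\rho$ on the two product structures defined above. Simplicial compatibility of $c_M$ is built into its construction from $\rho$, so the levelwise maps assemble into a morphism of simplicial commutative $\boxtimes$-monoids whose realization is the desired map. The main obstacle is the Eckmann--Hilton verification in the previous paragraph; everything else is book-keeping once the correct $\boxtimes$-monoid structures on the simplicial levels are in place.
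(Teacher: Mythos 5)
Your proof is correct and follows essentially the same approach as the paper: you equip each simplicial level $M^{\times q}$ with the commutative $\boxtimes$-monoid structure coming from $\cI$-commutativity of the cartesian $\cI$-monoid $M^{\times q}$ (your ``componentwise $\nu$'' structure is exactly the underlying $\cI$-space monoid structure of the cartesian $\cI$-monoid $M^{\times q}$, as one sees from naturality of $\rho$ in both variables), verify that the simplicial face maps --- above all the inner ones which apply $\mu$ --- are morphisms of commutative $\cI$-space monoids, and realize. Your Eckmann--Hilton computation $\mu(\mu(A_1,A_2),\mu(B_1,B_2)) = \mu(\mu(A_1,B_1),\mu(A_2,B_2))$ for $A_i$ and $B_j$ pushed forward along the disjointly-imaged restrictions of $\alpha$ is precisely what the paper records as ``can be checked using Lemma~\ref{lem:I-commutative},'' so you have just made that part more explicit. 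The one point where you diverge slightly is the verification that $\rho^{(q)}\colon M^{\boxtimes q}\to M^{\times q}$ is a map of commutative $\boxtimes$-monoids: the paper again invokes Lemma~\ref{lem:I-commutative} directly, while you deduce it from the general fact that the structure maps of a lax symmetric monoidal functor are monoid maps on tensor powers of commutative monoids. That general fact is a correct coherence exercise, but as stated (``lax symmetric monoidal transformations send monoid multiplications to monoid multiplications'') it is a little loose; the clean formulation is that for a lax symmetric monoidal $F$ and commutative monoids $A$, $B$, the structure map $F(A)\otimes F(B)\to F(A\otimes B)$ is a morphism of commutative monoids, applied here to the identity functor $(\cS^{\cI},\times)\to(\cS^{\cI},\boxtimes)$ with structure map $\rho$. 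Both routes are valid; yours trades the direct pointwise check for an appeal to abstract coherence.
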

\begin{proof}
  Since $M$ is $\cI$-commutative, the $q$-fold cartesian product
  $M^{\times q}$ with its canonical monoid structure is also
  $\cI$-commutative by Lemma~\ref{lem:I-commutative} (ii). Thus we can
  view $M^{\times q}$ as a commutative $\cI$-space monoid. Next we
  note that the simplicial structure maps of the simplicial object
  $[q]\mapsto M^{\times q}$ are morphism of $\cI$-space monoids. In
  the case of the injection $\delta_1 \colon [1] \to [2]$, this
  amounts to the commutativity of the square
  \[ \xymatrix@-1pc{ (M \times M) \boxtimes (M\times M) \ar[d]\ar[rr] & & M \times M \ar[d] \\
      M \times M \ar[rr] & & M }\] which can be checked using
  Lemma~\ref{lem:I-commutative}. The case of other simplicial
  structure maps is analogous. Since the realization of a simplicial
  object in commutative $\cI$-space monoids is a commutative
  $\cI$-space monoid (see~\cite[Proposition 9.9]{Sagave-S_diagram}), it follows that
  $B^{\times}(M)$ inherits the structure of a commutative $\cI$-space
  monoid. The commutative $\cI$-space monoid structure on $B^{\boxtimes}(M)$ is
  discussed in~\cite[\S{}4.1]{Sagave-S_group-compl}.

  It remains to check that $c_M$ is a morphism of commutative
  $\cI$-space monoids. For this, one can use
  Lemma~\ref{lem:I-commutative} to see that the iteration
  $M^{\boxtimes q} \to M^{\times q}$ of the map~\eqref{eq:rhoYX} is a
  morphism of commutative $\cI$-space monoids. Since it is compatible
  with the simplicial structure maps, the statement about $c_M$ follows.
\end{proof}

\subsection{Grouplike cartesian \texorpdfstring{$\cI$}{I}-monoids}
Via the passage to the underlying infinite loop space, connective
spectra are equivalent to grouplike $E_{\infty}$ spaces. In view of
the equivalence between commutative $\cI$-space monoids and
$E_{\infty}$ spaces, an analogous statement holds for grouplike commutative
$\cI$-space monoids (see~\cite[Theorem~1.5]{Sagave-S_group-compl}). Here an $\cI$-space monoid $M$ is \emph{grouplike} if the monoid $\pi_0(M_{h\cI})$ is a group. 

We say that a cartesian $\cI$-monoid $M$ is \emph{grouplike} if its
underlying $\cI$-space monoid is grouplike. It will be convenient
to have an intrinsic criterion for when a cartesian $\cI$-monoid is
grouplike. To formulate it, we write $\cN$ for the subcategory of
$\cI$ with morphisms all subset inclusions. Recall that an $\cI$-space
$X$ is \emph{semistable} if a (and hence any) fibrant replacement in the
absolute $\cI$-model structure induces a weak equivalence on the
homotopy colimits of the underlying $\cN$-diagrams~\cite[\S
2.5]{Sagave-S_group-compl}. Any fibrant $\cI$-space is semistable.
Another simple sufficient condition for semistability is convergence, that is, the existence of a non-decreasing sequence $(\lambda_k)_{k\geq 0}$ with $\lim_{k\to\infty}\lambda_k = \infty$ such that any morphism $\bld{m}\to\bld{n}$ in $\cI$ with $|\bld{m}|\geq k$ induces a $\lambda_k$-connected map $X(\bld{m}) \to X(\bld{n})$.

\begin{proposition}
  Let $M$ be a cartesian $\cI$-monoid such that $M$ is semistable as
  an $\cI$-space and the monoid
  $\colim_{\bld{k}\in \cN}\pi_0M(\bld{k})$ is a group. Then $M$ is grouplike.
\end{proposition}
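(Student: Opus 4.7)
The plan is to reduce the statement that $\pi_0(M_{h\cI})$ is a group to the hypothesis on $\colim_{\bld{k}\in\cN}\pi_0 M(\bld{k})$, using semistability to identify the two monoids.

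First I would unwind what needs to be shown. By the corollary preceding the proposition, the cartesian $\cI$-monoid $M$ has an underlying $\cI$-space monoid structure whose multiplication is obtained by precomposing the cartesian multiplication with $\rho_{M,M}\colon M\boxtimes M\to M\times M$. The $\cI$-space monoid structure endows $M_{h\cI}$, and hence $\pi_0(M_{h\cI})$, with a monoid structure, and ``grouplike'' precisely asks this to be a group.

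Next I would invoke semistability. By the definition recalled just before the proposition, semistability ensures that a fibrant replacement $M\to M^{\mathrm{fib}}$ in the absolute $\cI$-model structure induces a weak equivalence $\hocolim_{\cN}M\to \hocolim_\cN M^{\mathrm{fib}}$. Combined with the standard cofinality/coherence computation identifying $\pi_0$ of $\cI$-space homotopy colimits for fibrant objects, this gives a natural isomorphism of sets
\[
\pi_0(M_{h\cI})\ \iso\ \colim_{\bld{k}\in\cN}\pi_0 M(\bld{k}).
\]
(This is the standard tool already used throughout \cite{Sagave-S_group-compl} and is why the definition of semistable is designed the way it is.) I would cite the appropriate lemma from Sagave--Schlichtkrull rather than reprove it.

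The only remaining point, which I view as the main thing to check, is that this bijection is a monoid map when the right hand side is given the multiplication induced by the cartesian monoid structure on $M$. This is a direct computation: given classes $[m_1]\in\pi_0 M(\bld{k})$ and $[m_2]\in\pi_0 M(\bld{l})$, the product on $\colim_\cN \pi_0 M(\bld{k})$ is computed by pushing both along the canonical inclusions $\iota_1\colon\bld{k}\hookrightarrow\bld{k}\concat\bld{l}$ and $\iota_2\colon\bld{l}\hookrightarrow\bld{k}\concat\bld{l}$ (which lie in $\cN$) and then multiplying in $M(\bld{k}\concat\bld{l})$ via the cartesian $\mu$. On the other hand, the product in $\pi_0(M_{h\cI})$ via the $\boxtimes$-monoid structure sends $([m_1],[m_2])$ to $\mu^\boxtimes(m_1,m_2)\in M(\bld{k}\concat\bld{l})$, and by the definition of $\rho_{M,M}$ recalled in \eqref{eq:rhoYX} this element is exactly $\mu((\iota_1)_*m_1,(\iota_2)_*m_2)$. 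Thus the two monoid structures agree.

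Combining these three steps, the monoid $\pi_0(M_{h\cI})$ is isomorphic to $\colim_{\bld{k}\in\cN}\pi_0 M(\bld{k})$, which is a group by assumption, so $M$ is grouplike. The hard part is the bookkeeping in step three, which is slightly fiddly because the two multiplications are defined through different categorical products; but once one traces $\rho_{M,M}$ through the identity component $\alpha=\id_{\bld{k}\concat\bld{l}}$ of the colimit defining $(M\boxtimes M)(\bld{k}\concat\bld{l})$, the identification is immediate.
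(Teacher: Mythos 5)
Step~3 of your proposal contains a genuine gap, and it is exactly at the place you flag as ``the main thing to check.'' You claim the product on the colimit monoid $\colim_{\bld{k}\in\cN}\pi_0 M(\bld{k})$ is computed by pushing $m_1$ and $m_2$ forward along $\iota_1\colon\bld{k}\hookrightarrow\bld{k}\concat\bld{l}$ and $\iota_2\colon\bld{l}\hookrightarrow\bld{k}\concat\bld{l}$ ``(which lie in $\cN$)'' and then multiplying. But $\iota_2$, the inclusion of the last $l$ elements $j\mapsto k+j$, is \emph{not} a subset inclusion for $k>0$, so it does not lie in $\cN$. The monoid in the hypothesis is the colimit of the monoids $\pi_0M(\bld{k})$ along the morphisms of $\cN$, and its product is computed by pushing both elements into a common $M(\bld{m})$ along genuine subset inclusions and multiplying there. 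The formula you write computes the $\boxtimes$-induced product (a block concatenation), which is a priori a different multiplication on the same underlying set. Proving that the two coincide, for semistable $M$, is precisely the substance of the proposition rather than a direct computation one can read off from $\rho_{M,M}$; the remark the paper places immediately after the proof makes this explicit by exhibiting the cartesian $\cI$-monoid $\Sigma$ of symmetric groups, where the $\cN$-colimit $\Sigma_{(\infty)}$ is a group under composition while $\pi_0\Sigma_{h\cI}\cong\bN^{(\infty)}$ is not, and the comparison $\pi_0 M_{h\cN}\to\pi_0 M_{h\cI}$ fails to be a homomorphism. Treating the identification of products as immediate amounts to assuming what is to be proved.

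The paper handles this via a more structured reduction: it puts a hocolim model structure on $\cI$-diagrams of simplicial monoids (via Dugger's theorem), shows that semistability of the underlying $\cI$-space promotes to the corresponding multiplicative notion, and then forms the zig-zag of maps of cartesian $\cI$-monoids $M\to M^f\ot F^{\cI}_{\bld{0}}((M^f(\bld{0}))^{\mathrm{cof}})$ to a constant $\cI$-diagram of simplicial monoids, consisting of an $\cN$-equivalence and a level equivalence along which both hypothesis and conclusion are transported; for a constant diagram the $\boxtimes$-product and the levelwise product visibly coincide. Your steps~1 and~2 are fine, and the set-level bijection from semistability is correct, but an argument of this sort — using semistability for more than the underlying bijection — is indispensable for step~3.
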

\begin{proof}
It suffices to prove the statement working over simplicial sets. We equip the  category of simplicial monoids $\cA\cS$ with the standard model structure where a map is a fibration or weak equivalence if the underlying map in $\cS$ is. 
Using \cite[Theorem 5.2]{Dugger_replacing}, this model structure induces a hocolim-model structure on the category $(\cA\cS)^{\cI}$ of $\cI$-diagrams in $\cA\cS$ where the weak equivalences are detected by the corrected homotopy colimits over $\cI$ in $\cA\cS$. We say that a cartesian $\cI$-monoid $A$ is \emph{semistable} if a fibrant replacement in this model structure is an $\cN$-equivalence, i.e., if it induces a weak equivalence when forming the corrected homotopy colimit in $\cA\cS$ of the underlying $\cN$-diagrams. The corrected homotopy colimit of an $\cN$-diagram in $\cA\cS$ (resp. $\cS$) is weakly equivalent to the colimit of a cofibrant replacement in the corresponding projective level model structure on $\cN$-diagrams in $\cA\cS$ (resp. $\cS$). Since the forgetful functor $\cA\cS \to \cS$ preserves cofibrations and sequential colimits, this implies that a map in $(\cA\cS)^{\cI}$ is an $\cN$-equivalence in $(\cA\cS)^{\cI}$ if and only if the underlying map in $\cS^{\cI}$ is an $\cN$-equivalence in $\cS^{\cI}$. The implication (iii) $\Rightarrow$ (ii) in~\cite[Proposition 2.10]{Sagave-S_group-compl} also holds for $\cI$-diagrams of simplicial monoids since we can replace $\cS$ by $\cA\cS$ everywhere in the argument given there. It follows that $A$ is semistable in $(\cA\cS)^{\cI}$ if it is semistable in $\cS^{\cI}$. 

To prove the statement of the proposition, we now form a zig-zag 
\[ M \to M^f \ot F_{\bld{0}}^{\cI}( (M^f(\bld{0}))^{\mathrm{cof}}) \]
where the first map is a fibrant replacement in $(\cA\cS)^{\cI}$ and the second map is the 
derived adjunction counit of the Quillen equivalence $ F_{\bld{0}}^{\cI} \colon \cA\cS \rightleftarrows (\cA\cS)^{\cI} \colon \mathrm{Ev}_0$. Then the first map is an $\cN$-equivalence of cartesian $\cI$-monoids by the above discussion, and the second map is an absolute level equivalence since both objects are fibrant in $(\cA\cS)^{\cI}$. Since the condition about $\pi_0$ is preserved under $\cN$-equivalences, this reduces the claim to the case of a constant $\cI$-diagram of simplicial monoids where it is easily verified. 
\end{proof}
\begin{remark}
The semistability assumption in the proposition is necessary: For the cartesian $\cI$-monoid $\Sigma$ formed by the various symmetric groups one finds $\pi_0\Sigma_{h\cI}$ to be given by the conjugacy classes in $\Sigma_{(\infty)} = \colim_{k \in \cN}\Sigma_k$, with the monoid structure induced by the block sum of permutations. Clearly this monoid is not a group. In fact, the cycle decomposition of permutations shows  $\pi_0\Sigma_{h\cI} \cong \mathbb N^{(\infty)}$. In particular, the projection $\pi_0M_{h\mathcal N} \rightarrow \pi_0M_{h\cI}$ is not generally a homomorphism.
\end{remark}

\section{Parametrized spectra and  \texorpdfstring{$\cI$}{I}-monoid actions}\label{sec:parametrized}
We briefly recall the model for parametrized spectra introduced in~\cite{HSS-retractive}. To keep the exposition simple, we focus on the point set level definitions and constructions in this section and only discuss homotopical properties in the next section. 

\subsection{Symmetric spectra in retractive spaces} Let $\cS_{\cR}$ be
the category of retractive spaces. Its objects are pairs of spaces $(U,K)$
with structure maps $K \to U \to K$ that compose to the identity. Its 
morphisms are the pairs of maps that make the obvious two squares
commutative. We often view a based space $S$ as a retractive space and simply
write $S$ for $(S,*)$.

The category $\cS_{\cR}$ is symmetric monoidal under the fiberwise
smash product $\barsm$ with monoidal unit the retractive space
$S^0 = (S^0, *)$ (see~\cite[Definition~2.21]{HSS-retractive} for
details). On base spaces, it is just the cartesian product and restricted
to retractive spaces coming from based spaces it is the usual smash product.

Given an object $T$ in a closed symmetric monoidal category
$(\cC,\tensor)$, one can form symmetric spectra in $\cC$ with
$- \tensor T$ as suspension. They are defined as sequences
$(X_n)_{n\geq 0}$ of objects in $\cC$ with an action of the symmetric
group $\Sigma_n$ on $X_n$ and suitably compatible structure maps
$X_n \tensor T \to X_{n+1}$ (see~\cite[Definition~7.2]{Hovey_symmetric-general}).
\begin{definition}
  The category of \emph{symmetric spectra in retractive spaces}
  $\SpsymR$ is the category of symmetric spectrum objects in
  $(\cS_{\cR},\barsm, S^0)$ with $-\barsm (S^1,*)$ as suspension.
\end{definition}
Symmetric spectrum objects in unbased spaces with $- \times * \iso \id$ as suspension functor are equivalent to $\cI$-spaces. This implies that the projection to the base space $\pi_b \colon \cS_{\cR} \to \cS, (U,K) \to K$ induces 
a functor $\pi_b\colon \SpsymR \to \cS^{\cI}$. 
\begin{definition}
Let $X$ be an $\cI$-space. The category of $X$-relative symmetric spectra $\Spsym{X}$ is the fiber of $\pi_b\colon \SpsymR \to \cS^{\cI}$ over $X$. 
\end{definition}
We stress that the objects in $\Spsym{X}$ are in general not the spectrum objects in some category, but rather sequences of retractive spaces over varying base spaces. When $X$ is the terminal $\cI$-space, $X$-relative symmetric spectra are equivalent to ordinary symmetric spectra which we can therefore view as a subcategory of $\SpsymR$. A map of $\cI$-spaces $f \colon X \to Y$ induces an adjunction $f_! \colon \Spsym{X} \rightleftarrows \Spsym{Y} \colon f^*$ that is in level $n$ given by pushforward and pullback along $f(\bld{n})\colon X(\bld{n}) \to Y(\bld{n})$. 

The category $\SpsymR$ furthermore comes with symmetric monoidal Day convolution
smash product $\barsm$ induced by the fiberwise smash product $\barsm$
of $\cS_{\cR}$. Explicitly, it is given by 
\begin{equation}\label{eq:barsm-def} ((E,X)\barsm (E',X'))_n = \colim_{\alpha\colon \bld{k} \concat
    \bld{l} \to \bld{n}} (E,X)_k \barsm (E',X')_l \barsm
  S^{\bld{n}-\alpha}
\end{equation}
where the colimit is taken over the comma category
$-\concat - \downarrow \bld{n}$ and $S^{\bld{n}-\alpha}$ denotes a
smash power of $S^1$ indexed by $\bld{n}-\alpha$, the complement of
the image of $\alpha$. (We refer to~\cite[\S 4]{HSS-retractive} for the description
of the maps in the colimit system.) The unit of the $\barsm$-product on
$\SpsymR$ is $\bS = (\bS,*)$. On base $\cI$-spaces, the
$\barsm$-product is just the $\boxtimes$-product of $\cI$-spaces. When
$M$ is a commutative $\cI$-space monoid, the $\barsm$-product and the
pushforward along the multiplication $\mu \colon M \boxtimes M \to M$
induce the symmetric monoidal \emph{convolution smash product}
\[ \Spsym{M}\times\Spsym{M}\xrightarrow{\barsm}\Spsym{M\boxtimes{}M}\xrightarrow{\mu_!}\Spsym{M}
\]
on $\Spsym{M}$. The possibility to implement this monoidal product is one of the major advantages of working relative to base $\cI$-spaces rather than relative to base spaces.

The levelwise collapse of base spaces induces a functor $\Theta\colon \SpsymR \to \Spsym{}$. In the above notation, we have $\Theta(E,X) = (X \to *)_!(E,X)$. This functor $\Theta$ is strong symmetric monoidal.

\subsection{Actions of cartesian \texorpdfstring{$\cI$}{I}-monoids} 
We will now study actions of cartesian $\cI$-monoids on symmetric
spectra. To this end, we recall from~\cite[Section~4.17]{HSS-retractive} that there is a functor
\begin{equation}\label{eq:tensor-SI-SpsymR} 
-\times-\colon \cS^{\cI}\times\SpsymR\to\SpsymR 
\end{equation}
sending an $\cI$-space $Y$ and a symmetric spectrum in retractive space $(E,X)$ to the object $Y \times (E,X)$ given in level $n$ by $(Y(\bld{n})\times{}E_n,Y(\bld{n})\times{}X(\bld{n}))$. It exhibits $\SpsymR$ as being tensored over $(\cS^{\cI},\times, *)$. 

One important source of parametrized spectra is the functor 
\[ \bS^{\cI}_t \colon \cS^{\cI} \to \SpsymR, \quad \bS^{\cI}_t[X] = X \times \bS. \] It is strong symmetric monoidal with respect to $\boxtimes$ and $\barsm$, that is, there is a natural isomorphism $\bS^{\cI}_t[X\boxtimes Y] \iso \bS^{\cI}_t[X] \barsm \bS^{\cI}_t[Y]$. There is also a natural isomorphism $\Theta(\bS^{\cI}_t[X]) \iso \bS^{\cI}[X]$ identifying the symmetric spectrum obtained by collapsing the base $\cI$-space of $\bS^{\cI}_t[X]$ with the suspension spectrum $\bS^{\cI}[X]$ of the $\cI$-space $X$ considered in \cite[\S{}3.17]{Sagave-S_diagram}.

For the next definition we again view an ordinary symmetric spectrum $E$ as an object in $\SpsymR$ with the terminal $\cI$-space $*$ as base.  
\begin{definition}\label{def:cartesian-acts-on-spsym}
Let $M$ be a cartesian $\cI$-monoid and let $E$ be a symmetric spectrum. An $M$-action on $E$ is an  associative and unital map $\mu\colon M \times E \to E$ in~$\SpsymR$. 
\end{definition}
Unraveling definitions, $\mu\colon M \times E \to E$ consists of actions $M(\bld{n}) \times E_n \to E_n$ for all $\bld{n}$ in $\cI$ that are associative, unital, base point preserving, and compatible with the structure maps of $M$ and $E$. For a constant cartesian $\cI$-monoid, this is just the usual notion of an
action of a simplicial or topological monoid on $E$.

To our knowledge, actions of cartesian $\cI$-monoids on symmetric spectra were first considered in \cite{DP-I}. We will discuss their approach below. A similar, but more restrictive notion appears briefly \cite[Chapter 23]{May-S_parametrized} and is used there to produce commutative Thom spectra, which we (re)obtain by application of $\Theta$ to the following construction.

\begin{definition} Let $M$ be a cartesian $\cI$-monoid $M$ that acts on a symmetric spectrum $E$. The \emph{homotopy quotient} of this action is defined to be the two sided bar construction
\begin{equation}\label{eq:hty-quotient}
E \sslash M = B^{\times}(\ucI,M,E) = \big|[q] \mapsto \ucI\times M^{\times q} \times E \big|. 
\end{equation}
As usual, the simplicial structure maps of the underlying simplicial object are induced by the unit of $M$, the multiplication of $M$, and the action of $M$ on $E$. 
\end{definition}
We note that the underlying $\cI$-space of $E\sslash M$ is $B^{\times}(M)$, the bar construction of $M$ with respect to $\times$. As we will see in Sections~\ref{sec:examples-of-actions} and~\ref{finalsec}, the homotopy quotient is the source of many interesting examples for parametrized spectra. 

\begin{remark} For constant $M$ this construction appears in \cite[Chapter 22.1]{May-S_parametrized} and the object $\mathbb S \sslash \GLoneIof{\mathbb S}$ makes a brief appearance in \cite[Definition 23.5.1]{May-S_parametrized}. In their notation, the latter is the PFSP $B^{\times}(*,F,S)$. Pullbacks thereof seem to be the only overlap between the objects considered in loc.cit. and our $E \sslash M$, however.
\end{remark}
We now examine the formal structure of homotopy quotients and relate them to bar constructions involving the fiberwise smash product. This analysis will form the basis for the proof of Theorem \ref{thm:RsslashH-identification}. 

By passing to the underlying $\cI$-space monoid and applying the functor 
$\bS^{\cI}_t$, a cartesian $\cI$-monoid $M$ also gives rise to an associative
parametrized ring spectrum $\bS^{\cI}_t[M]$ with multiplication
\[\bS^{\cI}_t[M]\barsm \bS^{\cI}_t[M] \xrightarrow{\iso}
\bS^{\cI}_t[M\boxtimes M] \to \bS^{\cI}_t[M\times M] \to
\bS^{\cI}_t[M].\]
Furthermore, there is a natural distributivity map 
\begin{equation}\label{eq:distr}
\delta \colon (Y \times (E,X)) \barsm (Y' \times (E',X')) \to  (Y \boxtimes Y') \times \left((E,X) \barsm (E',X')\right)
\end{equation}
for $\cI$-spaces $Y,Y'$ and parametrized symmetric spectra $(E,X),(E',X')$, introduced in~\cite[\S 4.17]{HSS-retractive}. In the description of the $\barsm$-product given in~\eqref{eq:barsm-def}, it is given on the term of the colimit system indexed by $\alpha\colon \bld{k} \concat \bld{l} \to \bld{n}$ by the composite
\[
 (Y(\bld{k}) \times (E,X)_k) \barsm (Y'(\bld{l}) \times
(E',X')_l) \barsm S^{\bld{n}-\alpha} \to (Y \boxtimes
Y')(\bld{n}) \times ( (E,X) \barsm (E',X'))_n
\]
of the twist of the middle factors and the maps into the $\boxtimes$ and $\barsm$-products determined by $\alpha$. 
By setting  $(E,X) = \bS$, and $Y' = *$, the distributivity map specializes to a natural transformation of functors $\cS^{\cI}\times\SpsymR\to\SpsymR$ of the form
\begin{equation} 
\label{eq:rhoZYEX} \rho_{Y, (E',X')} \colon \bS^{\cI}_{t}[Y]\barsm{}(E',X') 
\to Y\times{}(E',X')
\end{equation} 
relating the two actions. The following compatibility condition is also stated in~\cite[(4.23)]{HSS-retractive} and will be used several times:
\begin{lemma}\label{lem:distr-square}
The natural maps~\eqref{eq:distr} and~\eqref{eq:rhoZYEX} participate in a commutative square whose top horizontal map is the twist of the middle factors:
  \[\xymatrix@-1pc{
\bS^{\cI}_t[Y] \barsm (E,X)  \barsm \bS^{\cI}_t[Y'] \barsm (E',X') \ar[rrr]^{\iso} \ar[d]^{\rho_{Y,(E,X)} \barsm \rho_{Y',(E',X')}}&&&  
\bS^{\cI}_t[Y \boxtimes Y'] \barsm (E,X) \barsm (E',X') \ar[d]_{\rho_{Y\boxtimes Y',(E,X)\barsm (E',X')}} \\
(Y \times (E,X) ) \barsm (Y' \times (E',X') ) \ar[rrr]^{\delta} &&&  (Y \boxtimes Y') \times  ((E,X) \barsm (E',X'))}
\]
\end{lemma}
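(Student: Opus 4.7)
The plan is to verify the square commutes by unpacking both $\delta$ and $\rho$ on representatives of the colimit system that defines the fiberwise smash product. By the formula \eqref{eq:barsm-def} iterated four times, both the top-left and top-right corners of the square are, in level $n$, canonical colimits indexed by injections $\alpha\colon \bld{k_1}\concat\bld{k_2}\concat\bld{k_3}\concat\bld{k_4}\to\bld{n}$ of smash products involving the levels of the factors and a sphere $S^{\bld{n}-\alpha}$. Similarly, the bottom-left corner is a two-fold colimit over injections $\bld{a}\concat\bld{b}\to\bld{n}$, with each factor itself a cartesian product that involves the $\cI$-space level $Y(\bld{a})$, $Y'(\bld{b})$. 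Since the upper horizontal map is the twist of the middle two factors followed by the strong monoidal isomorphism for $\bS^{\cI}_t$, it is the identity on the sphere coordinates and reorganizes the indexing injections via concatenation $\bld{k_1}\concat\bld{k_3}$ and $\bld{k_2}\concat\bld{k_4}$ in the expected manner.

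The key observation is that both $\rho$ (see \eqref{eq:rhoZYEX}) and $\delta$ (see \eqref{eq:distr}) are defined by the same elementary recipe: they use the restrictions of the indexing injection to the relevant blocks to push the $\cI$-space coordinates forward into $Y(\bld{n})$ and $Y'(\bld{n})$, and they leave the retractive space coordinates and the compensating sphere alone up to a twist of factors. Following a representative $(\alpha, s\in S^{\bld{n}-\alpha}, y, e, y', e')$ around both paths therefore produces the same output in $((Y\boxtimes Y')\times((E,X)\barsm(E',X')))_n$, namely the class represented by the injection $\alpha' = \alpha\circ (\id\concat \chi\concat\id)$ (after reblocking), the pair $((\alpha|_{\bld{k_1}})_*(y),(\alpha|_{\bld{k_3}})_*(y'))\in(Y\boxtimes Y')(\bld{n})$, and the smash $e\barsm e'\barsm s$ viewed in $((E,X)\barsm(E',X'))_n$.

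The only real bookkeeping task is to confirm that the block permutations and sphere twists introduced by the strong monoidal isomorphism $\bS^{\cI}_t[Y]\barsm\bS^{\cI}_t[Y']\xrightarrow{\iso}\bS^{\cI}_t[Y\boxtimes Y']$ in the top horizontal map match exactly the twists built into the distributivity map $\delta$ in the bottom horizontal. This will be the main (and, in fact, only) obstacle, but it is a direct consequence of how the symmetry isomorphism for $\boxtimes$ is defined in terms of the block permutations $\chi_{\bld{k},\bld{l}}$: both horizontal maps realise the same permutation of indexing data because each is constructed to commute the $(E,X)$ coordinate past the $Y'$ coordinate. Naturality of $\rho$ and $\delta$ in the indexing injections then ensures passage to the colimit, completing the verification.
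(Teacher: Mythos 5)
Your proposal is correct and follows essentially the same strategy as the paper's proof: express the source of the square as a colimit indexed by injections $\alpha\colon\bld{k_1}\concat\bld{k_2}\concat\bld{k_3}\concat\bld{k_4}\to\bld{n}$ and check that both composites send each term into the target by the canonical map induced by $\alpha$. (A minor notational slip: a point of $(Y\boxtimes Y')(\bld{n})$ is represented by the triple $(\alpha|_{\bld{k_1}\concat\bld{k_3}},y,y')$, not by the image pair in $Y(\bld{n})\times Y'(\bld{n})$, though your ``after reblocking'' indicates you have this right.)
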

\begin{proof}
  The iterated $\barsm$-product in the source can be written as a colimit of terms of the form
  \[(Y(\bld{k}) \times (S^k,*)) \barsm (E,X)_l \barsm (Y(\bld{k'}) \times (S^{k'},*)) \barsm (E',X')_{l'} \barsm S^{\bld{n}-\alpha}  \]
  indexed over injections $\alpha \colon \bld{k} \concat\bld{l}\concat \bld{k'} \concat \bld{l'} \to \bld{n}$. It can be checked from the definitions that both composites map these terms into $(Y \boxtimes Y)(\bld{n}) \times ((E,X) \barsm (E',X'))_{n}$  by the canonical map induced by $\alpha$. 
\end{proof}

In total, if a cartesian $\cI$-monoid $M$ acts on a symmetric spectrum $E$, we  get a map 
\[ \bS^{\cI}_t[M] \barsm E \xrightarrow{\rho_{M,E}} M \times E \xrightarrow{\mu} E.\]
\begin{lemma}\label{lem:module-structure-from-action}
This map provides an $\bS^{\cI}_t[M]$-module structure on $E$. 
\end{lemma}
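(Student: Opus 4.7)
The claim is a formal consequence of the lax symmetric monoidality of the identity functor $(\cS^{\cI}, \boxtimes) \to (\cS^{\cI}, \times)$ via $\rho$ (Lemma~\ref{lem:boxtimes-times-monoidal}), together with the compatibility of this lax structure with the tensoring $-\times-$ on $\SpsymR$ recorded in Lemma~\ref{lem:distr-square}, plus the evident fact that $\bS^{\cI}_t$ is strong symmetric monoidal. Two axioms need to be verified: the unit law and associativity of the composite $\mu \circ \rho_{M,E}\colon \bS^{\cI}_t[M] \barsm E \to E$.

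For the unit axiom, the monoid unit $\bS \to \bS^{\cI}_t[M]$ is $\bS^{\cI}_t$ applied to the unit $* \to M$. By naturality of $\rho_{-,E}$ in the first argument, the composite $\bS \barsm E \to \bS^{\cI}_t[M] \barsm E \xrightarrow{\rho_{M,E}} M \times E$ equals $\rho_{*,E}\colon \bS \barsm E \to * \times E$ followed by the inclusion $* \times E \to M \times E$. Directly from~\eqref{eq:distr}, the map $\rho_{*,E}$ is the canonical unit isomorphism, and composing with $\mu$ reduces to the unit axiom of the given $M$-action.

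For associativity, we show that both composites $\bS^{\cI}_t[M] \barsm \bS^{\cI}_t[M] \barsm E \rightrightarrows E$ agree with the common map
\[\bS^{\cI}_t[M] \barsm \bS^{\cI}_t[M] \barsm E \xrightarrow{\cong} \bS^{\cI}_t[M \boxtimes M] \barsm E \xrightarrow{\rho_{M \boxtimes M, E}} (M \boxtimes M) \times E \xrightarrow{(\mu \circ \rho_{M,M}) \times \id_E} M \times E \xrightarrow{\mu} E.\]
The \emph{multiply first} composite matches this by naturality of $\rho_{-,E}$ applied to the map of $\cI$-spaces $M \boxtimes M \xrightarrow{\rho_{M,M}} M \times M \xrightarrow{\mu} M$ underlying the monoid structure on $\bS^{\cI}_t[M]$. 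For the \emph{act first} composite, naturality of $\rho_{M,-}$ applied to $\mu\colon M \times E \to E$ rewrites it as $\mu \circ (\id_M \times \mu) \circ \rho_{M, M \times E} \circ (\id \barsm \rho_{M, E})$; associativity of the $M$-action then converts $\mu \circ (\id_M \times \mu)$ into $\mu \circ (\mu \times \id_E)$ after the canonical associator $M \times (M \times E) \cong (M \times M) \times E$; and Lemma~\ref{lem:distr-square} applied with $Y = Y' = M$, $(E,X) = \bS$ and $(E',X') = E$ identifies $\delta \circ (\id \barsm \rho_{M,E})$ with $\rho_{M \boxtimes M, E}$ (up to unit isomorphisms and the monoidality iso).

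The remaining and anticipated main obstacle is then the bookkeeping identity $(\rho_{M,M} \times \id_E) \circ \delta = \rho_{M, M \times E}$ under the canonical associator $(M \times M) \times E \cong M \times (M \times E)$, which bridges the two specializations of $\delta$ appearing above. This is verified by direct inspection of~\eqref{eq:distr}: on the colimit term of~\eqref{eq:barsm-def} indexed by $\alpha\colon \bld{k} \concat \bld{l} \to \bld{n}$, both composites send $(m, s, m', e, t)$ to $\bigl((\alpha|_{\bld{k}})_*(m), (\alpha|_{\bld{l}})_*(m'), \sigma(s \wedge e \wedge t)\bigr)$, where $\sigma$ denotes the canonical map $S^k \wedge E_l \wedge S^{\bld{n}-\alpha} \to E_n$ induced by $\alpha$. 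Everything else is formal naturality and the unit and associativity axioms of the given $M$-action.
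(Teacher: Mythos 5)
Your proof is correct and takes the same route as the paper's: the paper's proof is exactly the one-liner ``Unitality is clear, and associativity follows by using the associativity of the action of $M$ on $E$ and the commutative square in Lemma~\ref{lem:distr-square} with $(E,X)=\bS$, $(E',X')=E$, and $Y=Y'=M$,'' and you have spelled that out, including the verbatim same specialization of Lemma~\ref{lem:distr-square}. The one ingredient you flag as the ``remaining obstacle,'' namely $(\rho_{M,M}\times\id_E)\circ\delta = \rho_{M,M\times E}$ modulo the associator, is indeed left implicit in the paper's proof, and your direct inspection of the colimit term indexed by $\alpha$ --- both sides sending $(m,s,m',e,t)$ to $\bigl((\alpha|_{\bld k})_*(m),(\alpha|_{\bld l})_*(m'),\sigma(s\wedge e\wedge t)\bigr)$ --- is correct.
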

\begin{proof}
  Unitality is clear, and associativity follows by using  the associativity of the action of $M$ on $E$ and the commutative square in Lemma~\ref{lem:distr-square} with $(E,X) = \bS$, $(E',X')=E$, and $Y = Y' =  M$.
\end{proof}

We now relate the $M$-action and the $\bS^{\cI}_t[M]$-module structure
on $E$. The module structure of the previous lemma and the
$\bS^{\cI}_t[M]$-module structure on $\bS$ induced by $M \to *$ allow
us to form the two sided bar construction $B^{\barsm}(\bS,\bS^{\cI}_t[M],E)$
with respect to $\barsm$. Moreover, the natural
transformation~\eqref{eq:rhoZYEX} induces maps
$\bS \barsm \bS^{\cI}_t[M]^{\barsm q} \barsm E \to \ucI \times
M^{\times q} \times E$ that are compatible with the simplicial
structure maps by the commutativity of the square in
Lemma~\ref{lem:distr-square}. Passing to geometric realizations, we
obtain a natural map
\begin{equation}\label{eq:bar-constr-comparison}
B^{\barsm}(\bS,\bS^{\cI}_t[M],E) \to  B^{\times}(\ucI,M,E) = E \sslash M
\end{equation}
The underlying map of $\cI$-spaces is the map $c_M \colon  B^{\boxtimes}(M) \to B^{\times}(M)$ from~\eqref{eq:c_M}. We will show in Proposition~\ref{prop:bar-comparison} that a suitably derived version of~\eqref{eq:bar-constr-comparison} is a local equivalence.

\subsection{The Eckmann-Hilton condition} 
The next definition is analogous to~\cite[Definition 3.7]{DP-I}.
\begin{definition}\label{def:EH-for-R}
Let $M$ be a cartesian $\cI$-monoid acting on both a symmetric ring spectrum $R$ and an $R$-module $E$. We say that the actions have the \emph{Eckmann--Hilton} property if the square 
\begin{equation}\label{eq:MR-Eckmann--Hilton}\xymatrix@-1pc{ (M \times R) \barsm (M \times E) \ar[rr] \ar[d]&& (M\boxtimes M) \times (R \sm E) \ar[rr] && M \times E \ar[d] \\ 
R \sm E \ar[rrrr] &&&& E }
\end{equation}
is commutative. Here the maps are given by the action of $M$, the multiplication of $M$ and the action of $R$ on $E$, and an instance of the distributivity map~\eqref{eq:distr}. 
\end{definition}
If $R = E$ and the two actions agree, we shall simply speak of an Eckmann-Hilton action of $M$ on $R$. The following lemma is proved analogously to Lemma~\ref{lem:commutative-bar-constr-comp-for-M}. 
\begin{lemma}\label{lem:BSIMR-to-RsslashM-commutative}
Let $M$ be an $\cI$-commutative cartesian $\cI$-monoid acting both on a (commutative) symmetric ring spectrum $R$ and an $R$-module $E$, and the suppose that the actions have the Eckmann--Hilton property. Then
\begin{enumerate}[(i)]
\item $R \sslash M$ inherits a (commutative) ring structure, 
\item the comparison map $B^{\barsm}(\bS, \bS^{\cI}_t[M],R) \to  R\sslash M$ is a map of (commutative) parametrized ring spectra, 
\item the homotopy quotient $E \sslash M$ inherits an $R\sslash M$-module structure, and
\item the map $B^{\barsm}(\bS,\bS^{\cI}_t[M],E) \to E\sslash M$ is an $B^{\barsm}(\bS,\bS^{\cI}_t[M],R)$-module map. \qed
\end{enumerate}
\end{lemma}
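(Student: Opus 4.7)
The plan is to mirror the proof of Lemma~\ref{lem:commutative-bar-constr-comp-for-M}, promoting its strategy from $\cI$-spaces to parametrized symmetric spectra, with the Eckmann--Hilton condition of Definition~\ref{def:EH-for-R} playing the role that $\cI$-commutativity of $M$ played there.

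For part (i), I would first check that for each $q \geq 0$ the object $M^{\times q} \times R$ in $\SpsymR$ carries a (commutative) parametrized ring structure. Its base $\cI$-space is $M^{\times q}$, which is a commutative $\cI$-space monoid by Lemmas~\ref{lem:I-commutative} and~\ref{lem:commutative-bar-constr-comp-for-M}. The multiplication is built by composing the distributivity map~\eqref{eq:distr} (used $q$ times in succession) with the multiplication on $M^{\times q}$ in the base factor and the multiplication of $R$ in the spectrum factor. Associativity and unitality follow formally from those of $M$ and $R$, while the commutative square~\eqref{eq:MR-Eckmann--Hilton} of the Eckmann--Hilton property is exactly what is needed to see that this multiplication is well-defined and associative when mixing $M$-translates with $R$-multiplication. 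Commutativity, in the commutative case, is then reduced to $\cI$-commutativity of $M$ via Lemma~\ref{lem:I-commutative}(ii), to commutativity of $R$, and to the symmetry isomorphism of $\barsm$, precisely as in Lemma~\ref{lem:commutative-bar-constr-comp-for-M}.

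Next I would verify that the simplicial structure maps of $[q] \mapsto M^{\times q} \times R$ are maps of (commutative) parametrized ring spectra. The face map induced by multiplication $M \times M \to M$ is handled by the same commutative square argument as in the proof of Lemma~\ref{lem:commutative-bar-constr-comp-for-M}, while the face map using the $M$-action on $R$ requires exactly diagram~\eqref{eq:MR-Eckmann--Hilton}. Degeneracies and the remaining face maps are immediate. Since the geometric realization of a simplicial object in commutative parametrized ring spectra (over varying base $\cI$-space monoids) is again a commutative parametrized ring spectrum—applying the parametrized analogue of~\cite[Proposition~9.9]{Sagave-S_diagram} that is made available by the good behavior of the convolution smash product on $\Spsym{B^{\times}(M)}$—this establishes (i).

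For (ii), I would argue levelwise in simplicial degree. In each degree $q$ the comparison map is the composite of iterated instances of the distributivity map~\eqref{eq:rhoZYEX}, and Lemma~\ref{lem:distr-square} (applied $q$ times) gives that this composite is compatible with the multiplications assembled from $M$ and from $R$. The comparison is also compatible with the simplicial structure, so passing to realization yields a map of (commutative) parametrized ring spectra, whose underlying $\cI$-space map is $c_M$ by construction. Parts (iii) and (iv) then follow by the same scheme applied one slot up: form $M^{\times q} \times E$, use the Eckmann--Hilton property for the pair of actions to equip it with an $M^{\times q} \times R$-module structure, check the simplicial maps are module maps, and realize; the compatibility of~\eqref{eq:bar-constr-comparison} with these module structures again reduces to iterated applications of Lemma~\ref{lem:distr-square}.

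The main obstacle I anticipate is purely bookkeeping: keeping track of how the distributivity map~\eqref{eq:distr} interacts with the twist-and-multiply combinatorics when one expands the $q$-fold $\barsm$-product of $\bS^{\cI}_t[M] \barsm R$ and matches it against $M^{\times q} \times R$. No genuinely new idea beyond Lemma~\ref{lem:distr-square} and the Eckmann--Hilton square~\eqref{eq:MR-Eckmann--Hilton} should be required, but the verifications are most transparently organized by checking everything first in simplicial degree on the canonical colimit presentation of $\barsm$, as in the proof of Lemma~\ref{lem:distr-square} itself.
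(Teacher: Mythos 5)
Your proposal is correct and takes essentially the same approach as the paper: the paper itself only says the lemma ``is proved analogously to Lemma~\ref{lem:commutative-bar-constr-comp-for-M}'' and supplies no further details. Your expansion faithfully carries out that analogy — checking the structures degreewise on $M^{\times q}\times R$ and $M^{\times q}\times E$, using the Eckmann--Hilton square~\eqref{eq:MR-Eckmann--Hilton} to make the action-induced face maps into morphisms of (commutative) ring spectra or module spectra, realizing, and then handling the comparison map by iterating Lemma~\ref{lem:distr-square} in each simplicial degree. One small imprecision worth flagging: the Eckmann--Hilton condition is not used to define the multiplication on the degree-$q$ term $M^{\times q}\times R$ itself (that only uses distributivity, $\cI$-commutativity of $M^{\times q}$, and the ring structure of $R$); it enters precisely where you also say it does, namely in showing that the outer face maps — the ones invoking the $M$-action on $R$ or $E$ — respect the ring and module structures.
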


\subsection{Linear actions of cartesian \texorpdfstring{$\cI$}{I}-monoids}
When $R$ is a symmetric ring spectrum, $E$ is a left $R$-module spectrum, and $Y$ is an $\cI$-space, then we may view $Y \times E$ as an $R$-module. To see this, we note that the $R$-module structure is given by suitably compatible maps $\mu_{\alpha}\colon R_{k} \sm E_{l} \sm S^{\bld{n}-\alpha}\to E_{n}$ indexed by morphisms $\alpha \colon \bld{k}\concat \bld{l} \to \bld{n}$ in $\cI$. The $R$-module structure on $Y \times E$ is then given by the maps 
\[ 
 R_{k} \barsm  (Y(\bld{l}) \times E_{l}) \barsm S^{\bld{n}-\alpha}\xrightarrow{\iso}  Y(\bld{l}) \times (R_{k} \barsm E_{l} \barsm S^{\bld{n}-\alpha})\xrightarrow{(\alpha|_{\bld{l}})_*\barsm \mu_{\alpha} } Y \times E_{n}
\]
where we view all pointed spaces as retractive spaces and $Y(\bld{l}) \times (-)$ denotes the product with $ Y(\bld{l})$ both on the base and total space. 
\begin{definition}
The action $\mu$ of a cartesian $\cI$-monoid $M$ on an $R$-module $E$ is \emph{$R$-linear} if $\mu\colon M \times E \to E $ is an $R$-module map.  
\end{definition}
Now suppose $M$ acts $R$-linearly on $R$ itself (which is for example the case if the action is Eckmann--Hilton in the sense of Definition~\ref{def:EH-for-R}).
By composing with a suitable instance of \eqref{eq:rhoZYEX}, the $R$-module map $\mu \colon M \times R \to R$ induces an $R$-module map $\bS^{\cI}_t[M] \barsm R \to M \times R \to R $. 
Via the adjunction given by restriction and extension along $\bS \to R$, the latter map has the following adjoint in $\SpsymR$:  
\begin{equation}\label{eq:mu-flat} \mu^{\flat} \colon \bS^{\cI}_t[M] \to R
\end{equation}
\begin{lemma}
If $\mu$ is an $R$-linear action, then $\mu^{\flat}$ is a map of associative para\-metrized ring spectra $\mu^{\flat} \colon \bS^{\cI}_t[M] \to R^\op$. 
\end{lemma}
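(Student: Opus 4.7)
The plan is to verify the unit and multiplication axioms for $\mu^{\flat}$ separately. Unitality is immediate: the unit of the parametrized ring spectrum $\bS^{\cI}_t[M]$ arises from the composite $\bS \iso \bS^{\cI}_t[\ucI] \to \bS^{\cI}_t[M]$ induced by the unit $\ucI \to M$ of the monoid $M$. Postcomposing with $\mu^{\flat}$ reduces, after unwinding $\rho_{M,R}$, to the action of $1 \in M(\bld{0})$ on $R$, which by the unit axiom of the $M$-action equals the identity; hence $\mu^{\flat}$ preserves the unit.

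For multiplicativity, passing to adjoints under restriction and extension along $\bS \to R$, it suffices to compare two $R$-module maps $\bS^{\cI}_t[M] \barsm \bS^{\cI}_t[M] \barsm R \rightrightarrows R$. The first, obtained from $\mu^{\flat}$ applied to the monoid multiplication of $\bS^{\cI}_t[M]$, factors via the strong monoidal isomorphism $\bS^{\cI}_t[M] \barsm \bS^{\cI}_t[M] \iso \bS^{\cI}_t[M \boxtimes M]$, the underlying $\cI$-space monoid structure $\rho_{M,M}$ followed by $\mu_M$, and the action of $M$ on $R$. An application of Lemma~\ref{lem:distr-square} rewrites this as first applying $\rho_{M,R}$ and then $\mu_R$ to the rightmost pair, and then again to the resulting pair---thus evaluating the rightmost $M$-factor on $R$ first and the leftmost second. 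The second composite, which encodes the opposite multiplication on $R$, replaces each $\bS^{\cI}_t[M]$-factor by $R$ through $\mu^{\flat}$ and then multiplies the two resulting $R$-factors after swapping them.

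Equality of the two composites is controlled by the $R$-linearity of $\mu$, which is precisely the commutative square asserting $r \, (m \cdot s) = m \cdot (r s)$ for $m \in M$ and $r,s \in R$. Together with associativity of the $M$-action, this yields the pointwise identity
\[ (m_1 m_2) \cdot 1_R \;=\; m_1 \cdot (m_2 \cdot 1_R) \;=\; m_1 \cdot \mu^{\flat}(m_2) \;=\; \mu^{\flat}(m_2) \cdot \mu^{\flat}(m_1), \]
which is the $R^{\op}$-product of $\mu^{\flat}(m_1)$ and $\mu^{\flat}(m_2)$. The parametrized analogue is the diagram assembled from Lemma~\ref{lem:distr-square} and the defining square of $R$-linearity; chasing the diagram is routine once one keeps track of the indexing by $\alpha\colon \bld{k}\concat \bld{l} \to \bld{n}$. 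I do not anticipate a serious obstacle: the only conceptual subtlety is tracking the order reversal that produces $R^{\op}$, which is forced by the fact that applying the action from the left corresponds, under adjunction to $\mu^{\flat}$, to composition in the opposite order.
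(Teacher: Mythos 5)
Your proof is correct and takes essentially the same route as the paper: unitality is immediate, and for multiplicativity both arguments reduce, via Lemma~\ref{lem:distr-square} together with the defining square of $R$-linearity, to the pointwise chain $(m_1 m_2)\cdot 1_R = m_1\cdot(m_2\cdot 1_R) = m_1\cdot\mu^\flat(m_2) = \mu^\flat(m_2)\cdot\mu^\flat(m_1)$, which records the order reversal into $R^{\op}$. The paper packages this as a commutative square with source $R\barsm\bS^{\cI}_t[M]\barsm R$ followed by precomposition with $\mu^\flat$ and the unit, but the content is the same.
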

\begin{proof}
It is clear that $\mu^{\flat}$ is unital. The $R$-linearity of the action implies that the square
\[ \xymatrix@-1pc{R \barsm \bS^{\cI}_t[M] \barsm R \ar[rr]^-{\mathrm{id} \barsm \mu} \ar[d]_{\tau \barsm \mathrm{id}}&& R \barsm R \ar[d]\\
                  \bS^{\cI}_t[M] \barsm R \barsm R \ar[r]& \bS^{\cI}_t[M] \barsm R \ar[r]^-\mu& R}\]
commutes, where the unlabeled maps denote the multiplication of $R$. Precomposing with $\mu^\flat$ and the unit $\bS \rightarrow R$ on the left and right $R$-factor, respectively, one can use the $R$-linearity of $\mu$ in the upper composition and the associativity of $\mu$ on the lower one to obtain the claim.
\end{proof}

Since $R$ has the terminal $\cI$-space as base, parametrized (ring) spectrum maps $\bS^{\cI}_t[M] \to R^\op$ are in one to one correspondence to (ring) spectrum maps $\bS^{\cI}[M] \to R^\op$. Using the adjunction $(\bS^{\cI},\Omega^{\cI})$ from \cite[\S{}3.17]{Sagave-S_diagram}, the action map $\mu$ thus corresponds to a map of $\cI$-space monoids \begin{equation}\label{eq:mu-sharp} \mu^{\sharp} \colon M \to \Omega^{\cI}(R^\op).
\end{equation}
\begin{lemma}\label{dp-map}
  In $\cI$-space level $\bld{k}$, $\mu^{\sharp}$ sends $m \in M(\bld{k})$ to the map
\[S^{k} \to R_k, \quad s \mapsto \mu_{\bld{k}}(m, \iota_{k}(s)) \]
where $\iota_{k}$ denotes the $k$th component of the unit map $\bS \to R$. \qed
\end{lemma}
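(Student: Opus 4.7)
The plan is to unwind the two adjunctions defining $\mu^{\sharp}$ and trace everything through the explicit point-set formulas for $\bS^{\cI}_t$ and the distributivity map~\eqref{eq:rhoZYEX}.

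First I will compute $\mu^{\flat}$ explicitly on level $k$. By construction, $\mu^{\flat}$ is adjoint to the $R$-module map $\bS^{\cI}_t[M] \barsm R \to R$ along restriction--extension of scalars via the unit $\iota\colon \bS \to R$, and so factors as
\[
\bS^{\cI}_t[M] \iso \bS^{\cI}_t[M] \barsm \bS \xrightarrow{\mathrm{id} \barsm \iota} \bS^{\cI}_t[M] \barsm R \xrightarrow{\rho_{M,R}} M \times R \xrightarrow{\mu} R.
\]
Using the colimit formula~\eqref{eq:barsm-def} to describe level $k$ of $\bS^{\cI}_t[M] \barsm R$, the composition of the first two maps lands in the summand indexed by $\mathrm{id}\colon \bld{k} \concat \bld{0} \to \bld{k}$ and sends $(m,s) \in M(\bld{k}) \times S^k$ to the point represented by $(m, s \sm \iota_0(1))$ with $\iota_0(1) \in R_0$ the image of the non-basepoint under the level $0$ part of the unit. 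The description of $\rho_{M,R}$ recalled just before Lemma~\ref{lem:distr-square} then takes this summand to $(m, \iota_k(s)) \in M(\bld{k}) \times R_k$: the $M$-coordinate is unchanged because $\mathrm{id}|_{\bld{k}} = \mathrm{id}$, while the $R$-coordinate is the image of $s \sm \iota_0(1)$ under the iterated structure map $S^k \sm R_0 \to R_k$, which by the unit axiom of the symmetric ring spectrum $R$ equals $\iota_k(s)$. Applying $\mu$ finally yields $\mu^{\flat}_k(m,s) = \mu_{\bld{k}}(m, \iota_k(s))$.

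Next I apply the base-collapse functor $\Theta$. Using the natural isomorphism $\Theta(\bS^{\cI}_t[M]) \iso \bS^{\cI}[M]$, the symmetric spectrum map $\Theta(\mu^{\flat}) \colon \bS^{\cI}[M] \to R$ has level $k$ component the based map $M(\bld{k})_+ \sm S^k \to R_k$, $(m,s) \mapsto \mu_{\bld{k}}(m, \iota_k(s))$.

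Finally, the $(\bS^{\cI}, \Omega^{\cI})$ adjunction from \cite[\S{}3.17]{Sagave-S_diagram} identifies $\Theta(\mu^{\flat})$ with $\mu^{\sharp}\colon M \to \Omega^{\cI}(R^{\op})$; since at each level this adjunction is the usual smash--hom adjunction between based spaces, level $\bld{k}$ of $\mu^{\sharp}$ sends $m \in M(\bld{k})$ to the adjoint map $S^k \to R_k$, $s \mapsto \mu_{\bld{k}}(m, \iota_k(s))$, as asserted. The only real care is needed in the first paragraph, where the explicit formulas for~\eqref{eq:barsm-def} and~\eqref{eq:rhoZYEX} have to be invoked correctly; the rest is formal.
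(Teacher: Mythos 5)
Your proof is correct and follows exactly the route the paper has in mind: the lemma is stated with an immediate \verb|\qed| precisely because it is just a careful unwinding of the definitions of $\mu^{\flat}$, $\Theta$, and the $(\bS^{\cI},\Omega^{\cI})$-adjunction, which is what you carry out. The one small imprecision is attributing the identity $\sigma^k(s\wedge\iota_0(1))=\iota_k(s)$ to ``the unit axiom of the symmetric ring spectrum''; what is literally being used is that $\iota\colon\bS\to R$ is a map of symmetric spectra and hence commutes with the iterated structure maps (together with the twist $S^k\wedge R_0\cong R_0\wedge S^k$, since the suspension $\barsm(S^1,*)$ puts the sphere on the right). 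This is, of course, a consequence of $\iota$ being the unit of a ring spectrum, so the conclusion stands.
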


\begin{remark}
The observation that (Eckmann-Hilton-) actions of cartesian $\cI$-monoids on symmetric spectra allow to extract this map $\mu^\sharp$ was first made by  Dadarlat and Pennig; see ~\cite[Theorem 3.8]{DP-I}. 
\end{remark}

\begin{proposition}\label{prop:BSSIME-to-E-sslash-M}
Let $M$ be an cartesian $\cI$-monoid acting both on a commutative symmetric ring spectrum $R$ and an $R$-module $E$, and suppose that both actions have the Eckmann--Hilton property. Then the $\bS^{\cI}_t[M]$-module structure induced by the $M$-action on $E$ coincides with the restriction of the $R$-module structure along $\bS^{\cI}_t[M] \to R$.
\end{proposition}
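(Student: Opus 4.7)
The plan is to identify the two $\bS^{\cI}_t[M]$-module structures on $E$ with the two sides of the Eckmann--Hilton square~\eqref{eq:MR-Eckmann--Hilton}, after precomposition with a common unit-insertion map, and then to invoke the Eckmann--Hilton hypothesis.

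First I will unwind Structure~$2$ (restriction of the $R$-module structure along $\mu^\flat$) as the composite $\bS^{\cI}_t[M] \barsm E \xrightarrow{\mu^\flat \barsm \mathrm{id}} R \barsm E \xrightarrow{\mathrm{act}} E$, and record that by its definition~\eqref{eq:mu-flat} the map $\mu^\flat$ agrees with the composite $M \times \bS \xrightarrow{\mathrm{id}_M \times \eta_R} M \times R \xrightarrow{\mu_R} R$, where $\eta_R \colon \bS \to R$ denotes the ring spectrum unit. Structure~$1$ of Lemma~\ref{lem:module-structure-from-action} in turn is $\bS^{\cI}_t[M] \barsm E \xrightarrow{\rho_{M,E}} M \times E \xrightarrow{\mu_E} E$. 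Next I would construct the unit-insertion map
\[ \phi \colon \bS^{\cI}_t[M] \barsm E = (M \times \bS) \barsm (\ucI \times E) \xrightarrow{(\mathrm{id}_M \times \eta_R) \barsm (\eta_M \times \mathrm{id}_E)} (M \times R) \barsm (M \times E), \]
with $\eta_M \colon \ucI \to M$ the monoid unit of $M$, and show that the two paths in~\eqref{eq:MR-Eckmann--Hilton} from $(M \times R) \barsm (M \times E)$ to $E$, precomposed with $\phi$, reproduce Structure~$2$ and Structure~$1$ respectively.

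The first identification, for the $\mu_R \barsm \mu_E$ path, is essentially immediate: on the left tensor factor, $\mathrm{id}_M \times \eta_R$ followed by $\mu_R$ is $\mu^\flat$ by construction; on the right tensor factor, $\eta_M \times \mathrm{id}_E$ followed by $\mu_E$ is $\mathrm{id}_E$ by the unit axiom of the $M$-action on $E$. The subsequent $R$-action on $E$ then yields Structure~$2$.

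The second identification, for the $\delta$-path, is where the actual work lies. Here I would use that $\rho_{M,E}$ is a specialization of $\delta$ (with two of its four arguments set to trivial), combined with naturality of $\delta$ in its $\cI$-space arguments, to rewrite $\delta \circ \phi$ as $\rho_{M,E}$ followed by the unit-induced map $M \times E = (M \boxtimes \ucI) \times (\bS \barsm E) \to (M \boxtimes M) \times (R \barsm E)$. The subsequent map $(M \boxtimes M) \times (R \barsm E) \to M \times E$ then restricts on this image to the identity of $M \times E$, by the unit axioms for the underlying $\cI$-space monoid structure on $M$ and for the $R$-module structure on $E$. Composing with $\mu_E$ yields Structure~$1$, and the equality of the two precomposites of~\eqref{eq:MR-Eckmann--Hilton} via $\phi$ then concludes the proof.
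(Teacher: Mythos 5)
Your argument is correct, and it arrives at the proposition by essentially the same mechanism as the paper, namely by specializing the Eckmann--Hilton square~\eqref{eq:MR-Eckmann--Hilton} via unit insertions and using that $\rho_{M,E}$ is the specialization $\delta_{M,\,\ucI,\,\bS,\,E}$ of the distributivity map. The packaging differs slightly: the paper first reformulates the claim as the commutativity of a square with top-left corner $\bS^{\cI}_t[M]\barsm R\barsm E$ (inserting $\eta_R$ is hidden in the phrase ``is equivalent to''), and then deduces that square by stacking Lemma~\ref{lem:distr-square}, specialized to $Y=M$, $Y'=\ucI$, $(E,X)=R$, $(E',X')=E$, on top of the Eckmann--Hilton square restricted along $M\times\ucI\to M\times M$ (which inserts $\eta_M$). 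You instead precompose the full Eckmann--Hilton square with a single map $\phi$ that inserts both $\eta_R$ and $\eta_M$ at once, and replace the appeal to Lemma~\ref{lem:distr-square} by naturality of $\delta$ in all four of its arguments. Both routes then close with the same unit axioms for $M$, $R$, and the $M$-action on $E$. So the content is identical; your version is arguably a touch more economical in that it uses only the naturality of $\delta$ and does not invoke the $\barsm$-compatibility recorded in Lemma~\ref{lem:distr-square}, at the small cost of having to verify $\rho_{M,\bS}=\mathrm{id}$ to identify $\mu^\flat$ with $\mu_R\circ(\mathrm{id}_M\times\eta_R)$, a point you state correctly but somewhat tersely.
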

\begin{proof}
The claim is equivalent to requiring that the square of $R$-module spectra 
\[\xymatrix@-1pc{ \bS^{\cI}_t[M] \barsm R \barsm E \ar[r] \ar[d] & \bS^{\cI}_t[M] \barsm E \ar[d] \\ 
R \sm E \ar[r] & E} \]
commutes (where the left $R$-action in the upper left corner is through the middle term). This follows
from the commutativity of the square in Lemma~\ref{lem:distr-square} (where one sets ${Y = M}$, ${Y' = \ucI}$, $(E,X) = R$, and $(E',X') = E$) and the restriction of the Eckmann--Hilton condition for $E$ along $M \times \ucI \to M\times M$. 
\end{proof}

\section{Homotopical comparison of products and bar constructions}\label{sec:product-compar}
In order to analyze the homotopical properties of the constructions from the last section and to prove Theorem \ref{thm:RsslashH-identification} in Section \ref{proofs}, we briefly review some key aspects of the homotopy theory of symmetric spectra in retractive spaces. The formulations of almost all results of the present paper only involve the different types of weak equivalences for symmetric spectra in retractive spaces. We therefore focus our exposition on these notions and refer to \cite{HSS-retractive} for details about the model structures in which they participate. 

\subsection{Homotopy theory of symmetric spectra in retractive spaces} We say that a map $(U,K) \to (V,L)$ of retractive spaces is a \emph{weak equivalence} if both of its components $U \to V$ and $K \to L$ are weak homotopy equivalences of spaces. This notion leads to various notions of weak equivalences for symmetric spectra in retractive spaces. We say that a map $(E,X) \to (E',X')$ in $\SpsymR$ is an absolute (resp. a positive) level equivalence if $(E,X)_n \to (E',X')_n$ is a weak equivalence of retractive spaces for all $n \geq 0$ (resp. all $n \geq 1$). It follows from the definitions that if $(E,X) \to (E',X')$ is an absolute or positive level equivalence in $\SpsymR$, then the underlying map of $\cI$-spaces has this property.

To get to the most relevant notion of weak equivalence, one can apply Hovey's general construction principle for model structures on categories of generalized symmetric spectra \cite[Theorem 8.2]{Hovey_symmetric-general} to $\SpsymR$. It provides an \emph{absolute local model structure} on $\SpsymR$. (We refrain from calling it \emph{stable} since in lack of a zero object, $\SpsymR$ is not stable is the sense that suspension is invertible.) The weak equivalences in this model structure are called~\emph{local equivalences}. Both absolute and positive level equivalences are local equivalences.
Since a functor that is both left and right Quillen preserves all weak equivalences, it follows from~\cite[Corollary~5.13]{HSS-retractive} that if $ (E,X) \to (E',X')$ is a local equivalence, then the underlying map of base $\cI$-spaces is an $\cI$-equivalence. We also note that the collapse of base space functor $\Theta\colon \SpsymR \to \Spsym{}$ sends level and hence local equivalences $(E,X) \to (F,Y)$ to stable equivalences if we require the sections of the retractive spaces $(E,X)_n$ and $(F,Y)_n$ to be $h$-cofibrations in the topological case. 

One main result about the homotopy theory of parametrized symmetric spectra is~\cite[Theorem~1.2]{HSS-retractive} which states that the local model structure on $\SpsymR$ restricts to \emph{absolute local} model structures on the subcategories $\Spsym{X}$. That is, there is a local model structure on $\Spsym{X}$ where a map is a weak equivalence, cofibration or fibration if and only if it is so as a map in $\SpsymR$. The adjunction $(f_!,f^*)$ induced by a map of $\cI$-spaces $f\colon X \to Y$ is a Quillen adjunction with respect to these model structures, and a Quillen equivalence if $f$ is an $\cI$-equivalence. The chain of $\cI$-equivalences relating an $\cI$-space $X$ to the constant $\cI$-diagram on its Bousfield--Kan homotopy colimit $X_{h\cI}$ induces a chain of Quillen equivalences between $\Spsym{X}$ and the stabilization of the category of spaces under and over $X_{h\cI}$~\cite[Corollary~5.24]{HSS-retractive}.

\subsection{Homotopical comparison of products} To compare the different products in the last section, we first work over simplicial sets in order to rely on the results from \cite[Section~8]{HSS-retractive}. We explain in Corollary~\ref{cor:top-comparison-cor} how to derive a topological version of our results. 

Consider the map $\rho_{Y,(E,X)} \colon \bS^{\cI}_{t}[Y]\barsm{}(E',X') 
\to Y\times{}(E',X')$ from~\eqref{eq:rhoZYEX}. In general, it cannot be an isomorphism because the underlying map of $\cI$-spaces fails to be so. However, there is a  useful criterion for when it gives rise to a local equivalence. To phrase it, we use the notion of \emph{flat} $\cI$-spaces. This is a mild cofibrancy condition that is satisfied both by cofibrant $\cI$-spaces and the underlying $\cI$-spaces of cofibrant commutative $\cI$-space monoids~\cite[\S 3.8]{Sagave-S_diagram}. 

\begin{proposition} \label{prop:homotopical-comparison-product}
Let $(E,X)$ be a symmetric spectrum in retractive spaces, let $Y$ be a positive $\cI$-fibrant $\cI$-space, and let $h\colon Y^{\mathrm{cof}} \to Y$ be a positive level equivalence with $Y^{\mathrm{cof}}$ flat as an $\cI$-space. Then the composite of 
\[ \bS^{\cI}_t[Y^{\mathrm{cof}}] \barsm (E,X) \xrightarrow{\bS^{\cI}_t[h] \barsm \id}  \bS^{\cI}_t[Y] \barsm (E,X) \xrightarrow{\rho_{Y,(E,X)}} Y \times (E,X) \]
is a local equivalence. 
\end{proposition}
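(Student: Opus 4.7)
The plan is to exploit naturality of $\rho$ to split the composite into two pieces that can be handled separately, and then attack each piece with the tools of~\cite{HSS-retractive}. Concretely, by naturality of the comparison map $\rho$ in its first argument, the composite in the statement also equals
\[\bS^{\cI}_t[Y^{\mathrm{cof}}] \barsm (E,X) \xrightarrow{\rho_{Y^{\mathrm{cof}},(E,X)}} Y^{\mathrm{cof}} \times (E,X) \xrightarrow{h \times \id} Y \times (E,X),\]
so it suffices to show that both arrows are local equivalences.

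The right-hand map $h \times \id$ is easy. Since $h \colon Y^{\mathrm{cof}} \to Y$ is a positive level equivalence, the components $h(\bld{n}) \times \id_{E_n}$ and $h(\bld{n}) \times \id_{X(\bld{n})}$ are weak homotopy equivalences for every $\bld{n}$ with $|\bld{n}| \geq 1$, so $h \times \id$ is a positive level equivalence of symmetric spectra in retractive spaces. Positive level equivalences are local equivalences, handling this piece.

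The main work concerns the first map $\rho_{Y^{\mathrm{cof}},(E,X)}$. My plan is to first verify the underlying map of base $\cI$-spaces $Y^{\mathrm{cof}} \boxtimes X \to Y^{\mathrm{cof}} \times X$ is an $\cI$-equivalence, and then promote this to a local equivalence of parametrized spectra. For the base-space claim, the flatness of $Y^{\mathrm{cof}}$ guarantees that $- \boxtimes X$ is homotopically well-behaved, while the positive $\cI$-fibrancy of $Y$ combined with the positive level equivalence $h$ implies that $Y^{\mathrm{cof}}$ is semistable in the sense recalled before Definition~\ref{i-comm} (here we use that $h_{h\cI}$ and $h_{h\cN}$ have the same target up to weak equivalence because $Y$ is $\cI$-fibrant); these are precisely the hypotheses under which the Sagave--Schlichtkrull comparison $\rho_{Y^{\mathrm{cof}}, X}$ becomes an $\cI$-equivalence.

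To promote the base-space statement to a local equivalence of parametrized symmetric spectra, I would argue by reduction to a simple case via the model-theoretic apparatus of~\cite{HSS-retractive}. More precisely, the functors $\bS^{\cI}_t[Y^{\mathrm{cof}}] \barsm -$ and $Y^{\mathrm{cof}} \times -$ both preserve local equivalences in $(E,X)$ under the flatness hypothesis on $Y^{\mathrm{cof}}$ (the former because $\bS^{\cI}_t[Y^{\mathrm{cof}}]$ is flat/cofibrant as a parametrized spectrum when $Y^{\mathrm{cof}}$ is flat, and the latter because cartesian product with a flat $\cI$-space is a left Quillen functor on $\SpsymR$). Hence by standard cell-induction arguments it suffices to check the statement when $(E,X)$ is a generator of $\SpsymR$ in the absolute local model structure. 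For such generators, built from suspension spectra $\bS^{\cI}_t[Z]$ and the basic free retractive spaces, the map $\rho_{Y^{\mathrm{cof}}, (E,X)}$ unwinds---via the strong symmetric monoidality of $\bS^{\cI}_t$---into the suspension of the base-space comparison $\rho_{Y^{\mathrm{cof}}, Z}$, at which point we are reduced to the $\cI$-equivalence established above. The main obstacle I anticipate is carefully tracking cofibrancy in the colimits defining~\eqref{eq:barsm-def} throughout the reduction, but this should be the role of the flatness hypothesis on $Y^{\mathrm{cof}}$.
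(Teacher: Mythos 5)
Your opening moves are sound: the naturality rewrite of the composite as $\rho_{Y^{\mathrm{cof}},(E,X)}$ followed by $h\times\id$ is legitimate, and the argument that $h\times\id$ is a positive level equivalence (hence a local equivalence) is correct. The observation that $Y^{\mathrm{cof}}$ is semistable because it is positive-level equivalent to the positive $\cI$-fibrant $Y$ is also a correct and useful point.

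The gap is in the promotion step for $\rho_{Y^{\mathrm{cof}},(E,X)}$. You assert that $Y^{\mathrm{cof}}\times -$ preserves local equivalences ``because cartesian product with a flat $\cI$-space is a left Quillen functor on $\SpsymR$.'' This justification does not hold up. First, being left Quillen only gives preservation of (acyclic) cofibrations, not of arbitrary local equivalences; Ken Brown's lemma would only get you equivalences between cofibrant objects, and the cell-induction you then invoke would need to track cofibrancy through the stabilization cells of the local model structure, a highly nontrivial task. Second, and more fundamentally, the tensoring $-\times-\colon \cS^{\cI}\times\SpsymR\to\SpsymR$ is adapted to the level model structure, not the local one: there is no reason the stabilizing generating acyclic cofibrations of $\SpsymR$ remain local equivalences after applying $Y^{\mathrm{cof}}\times -$. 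Without a preservation statement for $Y^{\mathrm{cof}}\times -$, the cell-induction reduction to generators does not even get off the ground, and for $(E,X)$ a free spectrum $F_m(U,K)$ the identification of $\rho_{Y^{\mathrm{cof}},(E,X)}$ with a suspension of a base-space comparison is not automatic.

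The paper's proof avoids all of this by never needing $Y^{\mathrm{cof}}\times -$ to preserve local equivalences. Since $Y$ is positive $\cI$-fibrant, $Y^{\mathrm{cof}}$ is homotopy constant in positive degrees, so there is a zig-zag of positive level equivalences $Y\leftarrow Y^{\mathrm{cof}}\to\const_{\cI}\colim_{\cI}Y^{\mathrm{cof}}$ connecting $Y$ to a constant $\cI$-space. On the $\times$-side one only needs $-\times(E,X)$ to preserve positive level equivalences, which is elementary; on the $\barsm$-side one uses that $\bS^{\cI}_t[-]\barsm(E,X)$ sends $\cI$-equivalences between flat $\cI$-spaces to local equivalences (this is \cite[Corollary~8.9]{HSS-retractive}, the key technical input). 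Both reductions land you at constant $Y$, where $\rho_{Y,(E,X)}$ is already an isomorphism by inspection of the colimit defining $\barsm$. I would recommend adopting this reduction: it replaces the hard ``promotion'' problem entirely by the observation that the map is an isomorphism over a constant base, which is where the real content lies.
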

\begin{proof}
For any $\cI$-space $Z$ that is homotopy constant in positive degrees, a cofibrant replacement and the Quillen equivalence \cite[Theorem 3.3]{Sagave-S_diagram} induce a zig-zag of positive level equivalences $Z \ot Z^{\mathrm{cof}}\to \const_{\cI} \colim_{\cI}Z^{\mathrm{cof}}$ relating $Z$ to a constant $\cI$-space. The functor $- \times (E,X)$ sends positive level equivalences of $\cI$-spaces to positive level equivalences in $\SpsymR$ and thus in particular to local equivalences. Moreover, $ \bS^{\cI}_t[-] \barsm (E,X)$ sends $\cI$-equivalences between flat $\cI$-spaces to local equivalences by \cite[Corollary~8.9]{HSS-retractive}. Together this reduces the claim to showing that $ \bS^{\cI}_t[Y] \barsm (E,X) \to Y \times (E,X)$ is a local equivalence when $Y$ is the constant $\cI$-space on a space. In this case, it is even an isomorphism by inspection of the colimit defining $\barsm$.
\end{proof}

\begin{proposition}\label{prop:bar-comparison}
  Let $E$ be a symmetric spectrum with the action of a positive
  $\cI$-fibrant cartesian $\cI$-monoid $M$, and let $M^{\mathrm{cof}} \to M$ be a
  positive level equivalence of $\cI$-space monoids with $M^{\mathrm{cof}}$ flat as
  an $\cI$-space. Then the composite of
  \[
  B^{\barsm}(\bS,\bS^{\cI}_t[M^{\mathrm{cof}}],E) \to B^{\barsm}(\bS,\bS^{\cI}_t[M],E) \to
  B^\times(*,M,E)
  \]
  is a local equivalence. In particular, $c_M$ induces an $\cI$-equivalence $B^{\boxtimes}(M^{\mathrm{cof}}) \to B^{\times}\!(M)$.
\end{proposition}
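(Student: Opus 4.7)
The plan is to prove this by showing the comparison is a local equivalence at each simplicial level of the underlying bar constructions, and then to pass to geometric realizations. Using that $\bS^{\cI}_t$ is strong symmetric monoidal, the map in simplicial degree $q$ is naturally identified with
\[ \bS^{\cI}_t\bigl[(M^{\mathrm{cof}})^{\boxtimes q}\bigr] \barsm E \longrightarrow M^{\times q} \times E, \]
which by a $q$-fold application of Lemma~\ref{lem:distr-square} is the $q$-fold iterate of the distributivity map~\eqref{eq:rhoZYEX}.

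I would argue this degreewise map is a local equivalence by induction on $q$. The case $q = 0$ is the identity on $E$. For $q \geq 1$, Lemma~\ref{lem:distr-square} allows one to factor the map as
\[ \bS^{\cI}_t[M^{\mathrm{cof}}] \barsm \bigl(\bS^{\cI}_t[(M^{\mathrm{cof}})^{\boxtimes q-1}] \barsm E\bigr) \to \bS^{\cI}_t[M^{\mathrm{cof}}] \barsm \bigl(M^{\times (q-1)} \times E\bigr) \to M^{\times q} \times E. \]
The first map is $\bS^{\cI}_t[M^{\mathrm{cof}}] \barsm (-)$ applied to the inductive hypothesis; it is a local equivalence because $M^{\mathrm{cof}}$ is flat as an $\cI$-space and smashing with $\bS^{\cI}_t$ of a flat $\cI$-space preserves local equivalences (a consequence of~\cite[Corollary~8.9]{HSS-retractive}). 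The second map is a local equivalence by direct application of Proposition~\ref{prop:homotopical-comparison-product} with $Y = M$ and the parametrized spectrum $M^{\times(q-1)} \times E$ in place of $(E,X)$; note that $M^{\times q}$ inherits positive $\cI$-fibrancy from $M$.

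With the degreewise statement in hand, I would pass to geometric realizations. Flatness of $M^{\mathrm{cof}}$ together with the standard two-sided bar construction machinery should provide the Reedy cofibrancy needed for the degreewise local equivalences to realize to a local equivalence in $\SpsymR$. The final assertion about $c_M$ is then obtained by specialising to $E = \bS$ with trivial $M$-action: the base $\cI$-spaces of source and target become $B^{\boxtimes}(M^{\mathrm{cof}})$ and $B^{\times}(M)$, the induced map of bases is precisely $c_M$, and local equivalences in $\SpsymR$ descend to $\cI$-equivalences on base $\cI$-spaces.

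The main obstacle is the realization step: the simplicial objects involved live in $\SpsymR$ with varying base $\cI$-spaces, so one must lift the usual realization-of-bar-construction arguments from the fiberwise to the parametrized setting. Verifying the required cofibrancy conditions so that fiberwise homotopy colimits commute properly with the simplicial structure is the main technical hurdle, and here one relies heavily on the framework developed in~\cite{HSS-retractive}.
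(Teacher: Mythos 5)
Your proof is correct and follows the same overall strategy as the paper: establish the degreewise statement and then pass to geometric realizations via the fact that realizations of degreewise local equivalences are local equivalences ([HSS-retractive, Lemma~8.7]). The paper's proof is terser, simply saying ``apply Proposition~\ref{prop:homotopical-comparison-product} in every simplicial degree''; your induction on $q$, peeling off one $\barsm$-factor at a time using flatness of $M^{\mathrm{cof}}$ and applying Proposition~\ref{prop:homotopical-comparison-product} with $Y=M$ and varying $(E,X)$, is a sensible way to make that step explicit without needing to verify that $(M^{\mathrm{cof}})^{\boxtimes q}\to M^{\times q}$ is itself a positive level equivalence.
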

\begin{proof}
  This follows by applying
  Proposition~\ref{prop:homotopical-comparison-product} in every
  simplicial degree and then using
  \cite[Lemma~8.7]{HSS-retractive} which
  states that the realization of a degree-wise local equivalence is a
  local equivalence. (This is one occasion were working over topological spaces would require an additional well-basedness hypothesis.)
\end{proof}

\begin{remark}\label{rem:action-of-F}
By~\cite[Proposition 2.27]{Sagave-S_group-compl}, the statement about $c_M$ already holds if $M$ is assumed to be semistable rather than positive $\cI$-fibrant. The conclusion of the proposition also holds for the action of $M = F = \GLoneIof{\bS}$ on $\bS$, as we show in~\cite[Proposition~9.17]{HSS-retractive} (see Example~\ref{ex:cart-I-monoids}(ii) for a definition). The extra input for that result is~\cite[Theorem 1.4]{Schlichtkrull_Thom-symmetric}. 
  
  We generally expect the present proposition to hold under a semistability assumption (instead of positive fibrancy) but since the present version suffices for our application, we refrain from working out the requisite generalization of~\cite[Theorem 1.4]{Schlichtkrull_Thom-symmetric}.
\end{remark}

\subsection{Thom spectra from cartesian actions}\label{proofs} Let $R$ be a positive fibrant commutative symmetric ring spectrum, let $\Omega^{\cI}(R)$ be the commutative $\cI$-space monoid model of its underlying multiplicative $E_{\infty}$ space, and let $\GLoneIof{R} \hookrightarrow \Omega^{\cI}(R)$ be the grouplike sub-commutative $\cI$-space monoid of units of $R$. The latter is formed by those components of the $(\Omega^{\cI}R)(\bld{n}) = \mathrm{Map}(S^n,R_n)$ that represent units in $\pi_0(\Omega^\cI(R))_{h\cI}$. Here the fibrancy condition on $R$ ensures that $\Omega^{\cI}(R)$ and  $\GLoneIof{R}$ capture a well defined homotopy type without having to be derived. It can be relaxed to asking $R$ to be semistable and level-fibrant (see~\cite[Remark 2.6]{Basu_SS_Thom}). 

We write $G$ for a cofibrant replacement of $\GLoneIof{R}$ in $\cC\cS^{\cI}$. By adjunction, the canonical map $G \to  \Omega^{\cI}(R)$ gives rise to a map of commutative parametrized symmetric ring spectra $\bS^{\cI}_t[G] \to R$. The \emph{universal $R$-line bundle} $\gamma_R$ is then defined to be $B^{\barsm}(\bS,\bS^{\cI}_t[G],R)^{\mathrm{fib}}$, a chosen fibrant replacement of the two-sided bar $B^{\barsm}(\bS,\bS^{\cI}_t[G],R)$ in the category of $BG$-relative commutative parametrized symmetric ring spectra (see~\cite[Definition~8.2]{HSS-retractive}).  

Now suppose that $M$ is a grouplike $\cI$-commutative cartesian $\cI$-monoid, and that $M$ acts on a positive fibrant commutative symmetric ring spectrum $R$. Then the adjoint $\mu^{\sharp}\colon M \to \Omega^{\cI}(R)$ of the action from~\eqref{eq:mu-sharp} factors through the inclusion of the units $\GLoneIof{R} \to \Omega^{\cI}(R)$. Applying the cofibrant replacement functor $(-)^{\mathrm{cof}}$ in commutative $\cI$-space monoids thus provides a map $\mu^{\mathrm{cof}}\colon M^{\mathrm{cof}}\to G = (\GLoneIof{R})^{\mathrm{cof}}$ to the cofibrant replacement of the units.  

\begin{theorem}\label{comparison theorem}
  Let $M$ be a positive fibrant grouplike $\cI$-commutative cartesian
  $\cI$-monoid acting on a positive fibrant commutative symmetric ring
  spectrum $R$ and an $R$-module $E$ such that both actions are
  Eckmann--Hilton. Then there is the following zig-zag of local
  equivalences of $ B^{\barsm}(\bS,\bS^{\cI}_t[M^{\mathrm{cof}}],R)$-module spectra in
  $\SpsymR$:
\[
(B^{\boxtimes}(\mu^{\mathrm{cof}}))^* B^{\barsm}(\bS,\bS^{\cI}_t[G],E)^{\mathrm{fib}} \ot B^{\barsm}(\bS,\bS^{\cI}_t[M^{\mathrm{cof}}],E) \to B^{\times}(\ucI,M,E) = E\sslash M\]
\end{theorem}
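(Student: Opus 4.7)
I would handle the two maps in the zig-zag separately. For the \emph{right map}, the composite factors as
\[B^{\barsm}(\bS,\bS^{\cI}_t[M^{\mathrm{cof}}],E) \longrightarrow B^{\barsm}(\bS,\bS^{\cI}_t[M],E) \longrightarrow B^\times(\ucI,M,E)\]
via the cofibrant replacement $M^{\mathrm{cof}} \to M$. Under the positive $\cI$-fibrancy of $M$ and flatness of $M^{\mathrm{cof}}$, it is a local equivalence by Proposition~\ref{prop:bar-comparison}. The $B^{\barsm}(\bS,\bS^{\cI}_t[M^{\mathrm{cof}}],R)$-module structures on both sides, and the module linearity of this map, follow from Lemma~\ref{lem:BSIMR-to-RsslashM-commutative} applied to the actions of $M^{\mathrm{cof}}$ and $M$, together with the naturality of the bar construction.

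For the \emph{left map}, the morphism $\mu^{\mathrm{cof}}\colon M^{\mathrm{cof}} \to G$ of commutative $\cI$-space monoids induces a morphism $\bS^{\cI}_t[\mu^{\mathrm{cof}}] \colon \bS^{\cI}_t[M^{\mathrm{cof}}] \to \bS^{\cI}_t[G]$ of commutative parametrized ring spectra. Restriction along $\bS^{\cI}_t[G] \to R$ and then along $\bS^{\cI}_t[\mu^{\mathrm{cof}}]$ makes $E$ into an $\bS^{\cI}_t[M^{\mathrm{cof}}]$-module, which by Proposition~\ref{prop:BSSIME-to-E-sslash-M} agrees with the module structure coming from the Eckmann--Hilton $M$-action. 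Functoriality of the two-sided bar construction produces a natural map
\[B^{\barsm}(\bS,\bS^{\cI}_t[M^{\mathrm{cof}}],E) \longrightarrow B^{\barsm}(\bS,\bS^{\cI}_t[G],E) \longrightarrow B^{\barsm}(\bS,\bS^{\cI}_t[G],E)^{\mathrm{fib}}\]
in $\SpsymR$ covering $B^{\boxtimes}(\mu^{\mathrm{cof}})$; taking its adjoint across the Quillen adjunction $((B^{\boxtimes}(\mu^{\mathrm{cof}}))_!,(B^{\boxtimes}(\mu^{\mathrm{cof}}))^*)$ produces the left map of the zig-zag. Since the fibrant replacement can be performed in commutative ring spectra, this is also a module map.

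To verify that this map is a local equivalence, I would argue that the underlying square in $\SpsymR$ is homotopy cartesian. A direct inspection at each simplicial degree $q$ of the bar constructions shows that the strict pullback of $\bS^{\cI}_t[G]^{\barsm q}\barsm E$ over $G^{\boxtimes q}$ along $(M^{\mathrm{cof}})^{\boxtimes q} \to G^{\boxtimes q}$ is isomorphic to $\bS^{\cI}_t[M^{\mathrm{cof}}]^{\barsm q}\barsm E$, since neither the spectrum direction nor the factor $E$ is affected by the base change. Under the cofibrancy of $M^{\mathrm{cof}}$ this identification passes through geometric realization. The \emph{main obstacle} is then controlling the interaction of pullback and fibrant replacement: since $B^{\boxtimes}(\mu^{\mathrm{cof}})$ is generally not an $\cI$-equivalence, the pullback functor does not automatically preserve the local equivalence to the fibrant replacement. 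This step should be handled by invoking the properness properties of the local model structure on $\SpsymR$ from~\cite{HSS-retractive}, or by describing the fibrant replacement in a form compatible with base change.
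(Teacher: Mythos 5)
Your treatment of the right map is correct and matches the paper: both you and the authors factor it through $B^{\barsm}(\bS,\bS^{\cI}_t[M],E)$, invoke Proposition~\ref{prop:bar-comparison} for the local equivalence, and Lemma~\ref{lem:BSIMR-to-RsslashM-commutative} for the module structures.

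For the left map there is a genuine gap, and you have located it yourself without closing it. The strict-pullback observation is fine: $\bS^{\cI}_t[-]$ does send the map $(M^{\mathrm{cof}})^{\boxtimes q}\to G^{\boxtimes q}$ to a base change square, and (at least over simplicial sets) this passes through the colimit defining $\barsm$ and through realization. But a strict pullback of a non-fibrant object along a map that is not a fibration carries no homotopical content on its own. The point you are missing is that $(B^{\boxtimes}(\mu^{\mathrm{cof}}))^*$ is only a right Quillen functor, and $B^{\boxtimes}(\mu^{\mathrm{cof}})$ is in general \emph{not} an $\cI$-equivalence, so the adjunction $(f_!,f^*)$ is not a Quillen equivalence and strict pullbacks need not model derived ones. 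Neither of your two suggested remedies patches this: right properness only helps when pulling back along fibrations, and ``describing the fibrant replacement compatibly with base change'' is precisely the content you would need to prove. What the paper does instead is quote~\cite[Proposition~8.5]{HSS-retractive} directly, which (using that $M^{\mathrm{cof}}$ is flat) asserts exactly that the comparison map
\[
B^{\barsm}(\bS,\bS^{\cI}_t[M^{\mathrm{cof}}],E)
\longrightarrow
(B^{\boxtimes}(\mu^{\mathrm{cof}}))^{*}\,B^{\barsm}(\bS,\bS^{\cI}_t[G],E)^{\mathrm{fib}}
\]
is a local equivalence; this is a nontrivial Thom-spectrum base-change statement proved in the companion paper and is the ingredient your argument lacks. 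To salvage a self-contained proof you would have to establish that result, not merely cite ``properness.''
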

\begin{proof}
Since $M^{\mathrm{cof}}$ is flat,~\cite[Proposition~8.5]{HSS-retractive} provides the left hand local equivalence. The right hand map results from Proposition~\ref{lem:BSIMR-to-RsslashM-commutative} and is a local equivalence by Proposition~\ref{prop:bar-comparison}.
\end{proof}
When $E = R$, the maps in the theorem are commutative monoid maps, and the left hand side is simply the pullback of $\gamma_R$ along $B^{\boxtimes}(\mu^{\mathrm{cof}}) \colon B^{\boxtimes}(M^{\mathrm{cof}}) \to B^{\boxtimes}(G)$. Combined with Lemma~\ref{lem:BSIMR-to-RsslashM-commutative}, we therefore get:

\begin{corollary}\label{comparison corollary}
Let $R$ be a positive fibrant commutative symmetric ring spectrum and let $M$ be a  positive fibrant grouplike $\cI$-commutative cartesian $\cI$-monoid acting on $R$ with an Eckmann--Hilton action. Then we obtain a zig-zag
\[ (B^{\boxtimes}(\mu^{\mathrm{cof}}))^*\! (\gamma_R) \ot B^{\barsm}(\bS,\bS^{\cI}_t[M^{\mathrm{cof}}],R) \to R\sslash M\] 
of local equivalences of commutative $B^{\barsm}(\bS,\bS^{\cI}_t[M^{\mathrm{cof}}],R)$-algebra spectra in $\SpsymR$. \qed 
\end{corollary}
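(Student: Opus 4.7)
The plan is to derive this corollary immediately from Theorem~\ref{comparison theorem} in the special case $E = R$, then upgrade the resulting zig-zag of module spectra to one of commutative algebra spectra using Lemma~\ref{lem:BSIMR-to-RsslashM-commutative}.

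First I would specialize Theorem~\ref{comparison theorem} to the $R$-module $E = R$. This directly yields a zig-zag
\[
(B^{\boxtimes}(\mu^{\mathrm{cof}}))^* B^{\barsm}(\bS,\bS^{\cI}_t[G],R)^{\mathrm{fib}} \longleftarrow B^{\barsm}(\bS,\bS^{\cI}_t[M^{\mathrm{cof}}],R) \longrightarrow R\sslash M
\]
of local equivalences of $B^{\barsm}(\bS,\bS^{\cI}_t[M^{\mathrm{cof}}],R)$-modules in $\SpsymR$. By the very definition of the universal $R$-line bundle in the paragraph before Theorem~\ref{comparison theorem}, namely $\gamma_R = B^{\barsm}(\bS,\bS^{\cI}_t[G],R)^{\mathrm{fib}}$, the left-hand term is nothing but $(B^{\boxtimes}(\mu^{\mathrm{cof}}))^*(\gamma_R)$. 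This identifies the underlying zig-zag of parametrized spectra with the one in the statement.

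Second, I would promote every object and arrow to the category of commutative algebras over the base commutative parametrized ring spectrum $B^{\barsm}(\bS,\bS^{\cI}_t[M^{\mathrm{cof}}],R)$. Since $M$ is $\cI$-commutative and acts on the commutative ring $R$ in an Eckmann--Hilton fashion, this property descends along the map $M^{\mathrm{cof}} \to M$ so that $M^{\mathrm{cof}}$ acts on $R$ with the same properties. Lemma~\ref{lem:BSIMR-to-RsslashM-commutative}(i) then supplies commutative parametrized ring structures on both $B^{\barsm}(\bS,\bS^{\cI}_t[M^{\mathrm{cof}}],R)$ and $R \sslash M$, and part (ii) of the same lemma ensures that the right-hand comparison map is a morphism of commutative parametrized ring spectra. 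For the left-hand map, $\gamma_R$ is a commutative algebra in $\Spsym{B^{\boxtimes}(G)}$ by construction, and pullback $(B^{\boxtimes}(\mu^{\mathrm{cof}}))^*$ along the morphism of commutative $\cI$-space monoids induced by $\mu^{\mathrm{cof}}$ is lax symmetric monoidal and therefore carries commutative algebras to commutative algebras; the $B^{\barsm}(\bS,\bS^{\cI}_t[M^{\mathrm{cof}}],R)$-algebra structure on $(B^{\boxtimes}(\mu^{\mathrm{cof}}))^*(\gamma_R)$ is then induced by the canonical multiplicative map $B^{\barsm}(\bS,\bS^{\cI}_t[M^{\mathrm{cof}}],R) \to (B^{\boxtimes}(\mu^{\mathrm{cof}}))^*(\gamma_R)$, which agrees with the left-hand arrow in the zig-zag.

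No substantive obstacle arises beyond bookkeeping: all of Section~\ref{sec:product-compar} has been arranged so that the underlying local equivalences of Theorem~\ref{comparison theorem} automatically refine to equivalences of commutative algebras once the Eckmann--Hilton and $\cI$-commutativity hypotheses are in force. The corollary therefore follows formally by combining Theorem~\ref{comparison theorem} with Lemma~\ref{lem:BSIMR-to-RsslashM-commutative} and the definition of $\gamma_R$.
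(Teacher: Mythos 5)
Your proposal follows the same route as the paper: specialize Theorem~\ref{comparison theorem} to $E=R$, identify the left-hand term via the definition $\gamma_R = B^{\barsm}(\bS,\bS^{\cI}_t[G],R)^{\mathrm{fib}}$, and then upgrade to commutative algebras using Lemma~\ref{lem:BSIMR-to-RsslashM-commutative}. The one conceptual slip is your claim that the Eckmann--Hilton action ``descends along $M^{\mathrm{cof}} \to M$ so that $M^{\mathrm{cof}}$ acts on $R$ with the same properties'': $M^{\mathrm{cof}}$ is a cofibrant replacement of $M$ in the category of \emph{commutative $\cI$-space monoids}, not a cartesian $\cI$-monoid, so it does not act on $R$ in the sense of Definition~\ref{def:cartesian-acts-on-spsym} and cannot carry an Eckmann--Hilton action. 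The commutative ring structure on $B^{\barsm}(\bS,\bS^{\cI}_t[M^{\mathrm{cof}}],R)$ instead comes directly from the facts that $M^{\mathrm{cof}}$ is a commutative $\cI$-space monoid, that $\bS^{\cI}_t$ is strong symmetric monoidal, and that the composite $\bS^{\cI}_t[M^{\mathrm{cof}}] \to \bS^{\cI}_t[M] \to R$ is a map of commutative parametrized ring spectra; likewise, Lemma~\ref{lem:BSIMR-to-RsslashM-commutative}(ii) applies to the map with source $B^{\barsm}(\bS,\bS^{\cI}_t[M],R)$, and the $M^{\mathrm{cof}}$-version factors through it by a further map of commutative rings induced by $M^{\mathrm{cof}}\to M$. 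With those attributions corrected, your argument is sound and coincides with the paper's.
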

It is shown in~\cite[Section~9]{HSS-retractive} that 
$\Theta( (B\mu^{\mathrm{cof}})^* (\gamma_R))$ is a model for the Thom spectrum of $B(\mu^{\mathrm{cof}}) \colon B^{\boxtimes}(M^{\mathrm{cof}}) \to B^{\boxtimes}(G)$. Hence the corollary implies that ${\Theta(R\sslash M)}$ is equivalent to this Thom spectrum as a commutative symmetric ring spectrum. 

Finally, we detail the additions necessary to cover the topological case.
If $M$ is a cartesian $\cI$-monoid in topological spaces acting on a commutative ring spectrum $R$ in topological symmetric spectra, then geometric realization and singular complex induce a comparison map $| B^{\times}(*,\Sing(M), \Sing(R))| \to B^{\times}(*,M, R)$ since $|\!-\!|$ is strong symmetric monoidal and commutes with realization. Since $(|\!-\!|,\Sing)$ is a Quillen equivalence, the observation that the simplicial object $B^{\times}(*,M, R)$ is good in each $\cI$-degree when the $M(\bld{k})$ are well-based implies the following statement:
\begin{lemma}\label{lem: simp-top}
If $M$ is well-based in each $\cI$-degree, then the two comparison maps $| B^{\times}(*,\Sing(M), \Sing(R))| \to B^{\times}(*,|\Sing(M)|, R) \to B^{\times}(*,M, R)$ are both level equivalences in $\SpsymR$. \qed 
\end{lemma}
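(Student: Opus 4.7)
The plan is to reduce the statement to a level-wise analysis in each $\cI$-degree, exploiting that both comparison maps are induced by level equivalences of input data and that the bar construction $B^{\times}$ is homotopy invariant on proper simplicial spaces. First I would observe that since geometric realization is strong symmetric monoidal with respect to the cartesian product and commutes with geometric realization of simplicial objects, there is a canonical isomorphism $|B^{\times}(\ucI,\Sing(M),\Sing(R))| \cong B^{\times}(\ucI,|\Sing(M)|,|\Sing(R)|)$ in $\SpsymR$. Under this identification, the first comparison factors as this isomorphism followed by the map induced by the counit $\epsilon_R \colon |\Sing(R)| \to R$, while the second comparison is induced by the counit $\epsilon_M \colon |\Sing(M)| \to M$. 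Both counits are level weak equivalences since $(|-|,\Sing)$ is a Quillen equivalence.

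Next I would fix an $\cI$-degree $\bld{n}$. In that degree, each simplicial retractive space underlying the bar constructions in question has the form $[q] \mapsto N(\bld{n})^{\times q} \times E_n$ with $N \in \{M, |\Sing(M)|\}$ and $E_n \in \{R_n, |\Sing(R)|_n\}$, and the degeneracies are all induced by the unit inclusion $\ucI \to N(\bld{n})$. By the well-basedness hypothesis on $M$, the fact that any $|\Sing(-)|$ is a CW complex hence well-based, and that well-basedness is preserved by finite products in compactly generated weak Hausdorff spaces, each of these unit inclusions is a closed $h$-cofibration. Consequently every simplicial retractive space in sight is proper (good) in both total and base coordinates.

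The final step is then to invoke the standard fact that geometric realization of a proper simplicial space sends levelwise weak equivalences to weak equivalences. Applying this to the simplicial maps $\mathrm{id}_{N(\bld{n})}^{\times q} \times \epsilon_{R,n}$ and $\epsilon_{M,\bld{n}}^{\times q} \times \mathrm{id}_{R_n}$, each of which is a level weak equivalence of retractive spaces because finite products preserve weak equivalences in compactly generated weak Hausdorff spaces, yields that both comparison maps are weak equivalences on total and base spaces in each $\cI$-degree. This is precisely what it means to be a level equivalence in $\SpsymR$.

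The main subtlety I anticipate is bookkeeping: one has to make sure the well-basedness hypothesis really does propagate through $|\Sing(-)|$ and finite cartesian products so that every simplicial object arising is good, and to verify that the two counits really do assemble, after applying geometric realization, into the comparison maps described in the statement. This is largely a matter of carefully unwinding the definitions rather than overcoming any genuine conceptual difficulty, and the result is stated without proof presumably because these verifications are standard.
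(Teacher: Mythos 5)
Your proof is correct and takes essentially the same approach as the paper, which bundles into the preceding paragraph the three inputs you identify: that $|\!-\!|$ is strong symmetric monoidal and commutes with realization (giving the factorization through the counits), that $(|\!-\!|,\Sing)$ is a Quillen equivalence (so both counits are levelwise weak equivalences), and that well-basedness of the $M(\bld k)$ makes the relevant simplicial objects good in each $\cI$-degree so that realization preserves the equivalences. You have simply fleshed out these observations; the one thing you make more explicit than the paper is that goodness must also be checked for the intermediate terms $B^{\times}(\ucI,|\Sing(M)|,|\Sing(R)|)$ and $B^{\times}(\ucI,|\Sing(M)|,R)$, which indeed follows because $|\Sing(-)|$ is always a CW complex and hence well-pointed.
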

Together with \cite[Proposition~6.14]{HSS-retractive} this provides the following topological version of Corollary~\ref{comparison corollary} that again allows us to identify $R\sslash M$ as a Thom spectrum:
\begin{corollary}\label{cor:top-comparison-cor}
If we are working over topological spaces, $R$ and $M$ satisfy the assumptions of Corollary~\ref{comparison corollary}, and $M$ is well-based in each $\cI$-degree, then $R\sslash M$ is locally equivalent to $ |(B^{\boxtimes}(\Sing(\mu^{\mathrm{cof}})))^*\! (\gamma_{\Sing(R)})|$.  
\end{corollary}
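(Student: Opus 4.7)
The plan is to reduce the topological statement to the simplicial one already established in Corollary~\ref{comparison corollary} by passing through the Quillen equivalence $(|\!-\!|,\Sing)$, and then to transport the resulting zig-zag back to topological spaces level by level.

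First I would apply $\Sing$ to $R$, to $M$, and to the action $M \times R \to R$. Since $\Sing$ is a lax symmetric monoidal right adjoint that commutes with limits, $\Sing(M)$ inherits the structure of a cartesian $\cI$-monoid, $\Sing(R)$ is a commutative symmetric ring spectrum in $\SpsymR_{\mathrm{sSet}}$, and the action and the Eckmann--Hilton condition from \eqref{eq:MR-Eckmann--Hilton} pass through $\Sing$ verbatim. Next I would verify that the hypotheses of Corollary~\ref{comparison corollary} are satisfied after applying $\Sing$: $\Sing$ sends weak equivalences between fibrant objects to level equivalences and preserves the relevant positive fibrancy conditions, so $\Sing(M)$ remains a positive $\cI$-fibrant grouplike $\cI$-commutative cartesian $\cI$-monoid and $\Sing(R)$ remains a positive fibrant commutative symmetric ring spectrum. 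Corollary~\ref{comparison corollary} applied to $\Sing(M)$ and $\Sing(R)$ then yields a zig-zag
\[
(B^{\boxtimes}(\Sing(\mu^{\mathrm{cof}})))^*\!(\gamma_{\Sing(R)}) \;\ot\; B^{\barsm}(\bS, \bS^{\cI}_t[\Sing(M)^{\mathrm{cof}}], \Sing(R)) \;\longto\; \Sing(R)\sslash \Sing(M)
\]
of local equivalences of commutative parametrized ring spectra in simplicial symmetric spectra.

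Now I would apply $|\!-\!|$ to this zig-zag. Here I appeal to \cite[Proposition~6.14]{HSS-retractive} to ensure that $|\!-\!|$ takes local equivalences in $\SpsymR_{\mathrm{sSet}}$ to local equivalences in $\SpsymR_{\mathrm{Top}}$ (at least after imposing the standard well-basedness/level-cofibrancy conditions, which follow from our hypothesis that $M$ is well-based in each $\cI$-degree together with the fact that the standard cofibrant replacements produce $h$-cofibrations). It remains to identify the right-hand endpoint $|\Sing(R)\sslash \Sing(M)|$ with $R\sslash M$. This is exactly the content of Lemma~\ref{lem: simp-top}: the unit and counit squares of $(|\!-\!|,\Sing)$ induce comparison maps
\[
|B^{\times}(\ucI,\Sing(M),\Sing(R))| \;\longto\; B^{\times}(\ucI,|\Sing(M)|,R) \;\longto\; B^{\times}(\ucI,M,R),
\]
which are both level equivalences under the well-basedness hypothesis on $M$. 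Splicing this into the image of the zig-zag under $|\!-\!|$ gives the claimed local equivalence between $|(B^{\boxtimes}(\Sing(\mu^{\mathrm{cof}})))^*(\gamma_{\Sing(R)})|$ and $R\sslash M$.

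The main technical obstacle I expect is the verification that geometric realization is homotopically well-behaved on the relevant bar constructions: one must check that the simplicial objects $B^{\times}(\ucI,\Sing(M),\Sing(R))$ and $B^{\barsm}(\bS,\bS^{\cI}_t[\Sing(M)^{\mathrm{cof}}],\Sing(R))$ are good in each $\cI$-degree (so that realization is homotopy-invariant) and that the cofibrant replacements of $\Sing(M)$ in $(\cA\cS)^{\cI}$ and of the units in $\cC\cS^{\cI}$ produce objects whose levelwise realizations remain well-based and flat. This is where the well-basedness assumption on $M$ is essential, and where the cited \cite[Proposition~6.14]{HSS-retractive} and Lemma~\ref{lem: simp-top} do the bulk of the work.
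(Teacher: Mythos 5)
Your proposal is correct and follows essentially the same route the paper intends: apply Corollary~\ref{comparison corollary} to $\Sing(M)$ and $\Sing(R)$ over simplicial sets, transport the resulting zig-zag along $|\!-\!|$ using \cite[Proposition~6.14]{HSS-retractive} to preserve local equivalences, and identify the endpoint $|B^{\times}(\ucI,\Sing(M),\Sing(R))|$ with $R\sslash M$ via Lemma~\ref{lem: simp-top}. The paper states all of this without a formal proof environment, delegating exactly these ingredients, so your reconstruction matches its implicit argument.
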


\section{Examples of \texorpdfstring{$\cI$}{I}-monoid actions}\label{sec:examples-of-actions}
Our principal examples of cartesian $\cI$-monoids all arise from the following construction:

\begin{construction}\label{cartmon}
Let $(\cC,\tensor,\bld{1})$ be a symmetric monoidal category  enriched over the category of spaces $(\cS,\times,*)$. 

Given objects $C,D$ of $\cC$, we can define a cartesian $\cI$-monoid $\mathrm{End}_{\cI}(C;D)$ by setting $\mathrm{End}_{\cI}(C;D)(\bld{n}) = \mathrm{End}(D \tensor C^{\tensor \bld{n}})$ with monoid multiplication the composition. To give its structure maps, we first notice that $\alpha\colon \bld{m} \to \bld{n}$ in $\cI$ induces a bijection $\bld{m}\concat (\bld{n}-\alpha) \to \bld{n}$ where $\bld{n}-\alpha = \bld{n}\setminus \alpha(\bld{m})$ denotes the complement of the image of $\alpha$. This bijection induces an  isomorphism $\bar\alpha\colon C^{\bld{m}}\tensor C^{\bld{n}-\alpha} \to C^{\bld{n}}$ in $\cC$. Using $\bar\alpha$ and the identity in $\mathrm{End}(C^{\tensor \bld{n}-\alpha})$
we define $\alpha_*$ to be the composite 
\[\mathrm{End}(D \tensor C^{\tensor \bld{m}}) \xrightarrow{-\tensor \id} \mathrm{End}(D \tensor C^{\tensor \bld{m}} \tensor C^{\tensor \bld{n}-\alpha}) \xrightarrow{(\mathrm{id} \otimes \bar \alpha)^* \circ (\mathrm{id} \otimes \bar\alpha)_*}  \mathrm{End}(D \otimes C^{\tensor \bld{n}}).\]
The cartesian $\cI$-monoid  $\mathrm{End}_{\cI}(C;D)$ contains the cartesian $\cI$-monoid $\mathrm{Aut}_\cI(C;D)$ of invertible elements as a subobject. 
\end{construction}

We shall abbreviate $\mathrm{End}_{\cI}(C;\bld 1)$ to $\mathrm{End}_{\cI}(C)$ and similarly for $\mathrm{Aut}_\cI$. 

\begin{lemma}
The cartesian $\cI$-monoids  $\mathrm{End}_{\cI}(C)$ and  $\mathrm{Aut}_\cI(C)$ arising from the previous construction are $\cI$-commutative. \qed
\end{lemma}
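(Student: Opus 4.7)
The plan is to verify condition (iii) of Lemma~\ref{lem:I-commutative}: for every injection $\alpha\colon\bld{k}\concat\bld{l}\to\bld{n}$, the map
\[
\phi_{\alpha}\colon\mathrm{End}(C^{\tensor\bld{k}})\times\mathrm{End}(C^{\tensor\bld{l}})\longrightarrow \mathrm{End}(C^{\tensor\bld{n}}),\quad (f,g)\longmapsto (\alpha|_{\bld{k}})_{*}(f)\circ(\alpha|_{\bld{l}})_{*}(g)
\]
is a monoid homomorphism. Since each structure map $\beta_{*}$ is already a monoid homomorphism (it is the composite of $(-)\tensor\id$, which is plainly multiplicative, with conjugation by the iso $\bar\beta$ from Construction~\ref{cartmon}), the homomorphism condition for $\phi_{\alpha}$ reduces to showing that the images of $(\alpha|_{\bld{k}})_{*}$ and $(\alpha|_{\bld{l}})_{*}$ commute elementwise inside $\mathrm{End}(C^{\tensor\bld{n}})$.

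The first step is therefore to establish this commutativity. Because $\alpha$ is an injection with source $\bld{k}\concat\bld{l}$, the subsets $\alpha(\bld{k})$ and $\alpha(\bld{l})$ of $\bld{n}$ are disjoint. Extending $\alpha$ by a chosen bijection onto the complement yields a bijection $\tilde\alpha\colon\bld{k}\concat\bld{l}\concat\bld{r}\to\bld{n}$, where $\bld{r}$ parametrizes $\bld{n}\setminus\alpha(\bld{k}\concat\bld{l})$. The symmetric monoidal structure on $\cC$ turns this bijection into a canonical isomorphism $\overline{\tilde\alpha}\colon C^{\tensor\bld{k}}\tensor C^{\tensor\bld{l}}\tensor C^{\tensor\bld{r}}\to C^{\tensor\bld{n}}$, and tracing through the definition of $\alpha_{*}$ shows that conjugation by $\overline{\tilde\alpha}^{-1}$ carries $(\alpha|_{\bld{k}})_{*}(f)$ to $f\tensor\id\tensor\id$ and $(\alpha|_{\bld{l}})_{*}(g)$ to $\id\tensor g\tensor\id$. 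These two endomorphisms obviously commute in $\mathrm{End}(C^{\tensor\bld{k}}\tensor C^{\tensor\bld{l}}\tensor C^{\tensor\bld{r}})$, so their images commute in $\mathrm{End}(C^{\tensor\bld{n}})$ as required.

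The step demanding the most care is the identification of $(\alpha|_{\bld{k}})_{*}(f)$ and $(\alpha|_{\bld{l}})_{*}(g)$ with the claimed block form, because the definition of $\beta_{*}$ extends by $\id$ on $C^{\tensor\bld{n}-\beta}$ and then conjugates by $\bar\beta$, which uses only the subset $\beta(\bld{m})$ and its complement rather than the finer decomposition by $\tilde\alpha$. One must therefore invoke coherence of the symmetric monoidal structure to compare $\bar{\alpha|_{\bld{k}}}$ (resp. $\bar{\alpha|_{\bld{l}}}$) with the restriction of $\overline{\tilde\alpha}$ to the appropriate factors; after this routine but mildly tedious bookkeeping, the two endomorphisms really do sit in the prescribed block-diagonal positions.

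Finally, the statement for $\mathrm{Aut}_{\cI}(C)$ is immediate: it is defined as the $\cI$-submonoid of invertible elements in $\mathrm{End}_{\cI}(C)$, and the commutation identity established above restricts to it.
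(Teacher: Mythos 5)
The paper states this lemma with no proof at all (the body is just \verb|\qed|), treating it as immediate from the construction, and your argument is precisely the verification that makes it so: both structure maps are monoid homomorphisms, and after conjugating by the common bijection $\overline{\tilde\alpha}$ their images land in the disjoint tensor blocks $f\otimes\id\otimes\id$ and $\id\otimes g\otimes\id$, which commute. One worthwhile observation you could add to sharpen the coherence step you flag is that it is exactly the absence of the auxiliary object $D$ which makes the two blocks disjoint; for $\mathrm{End}_{\cI}(C;D)$ with nontrivial $D$ both $(\alpha|_{\bld{k}})_*$ and $(\alpha|_{\bld{l}})_*$ act nontrivially on the $D$-factor, the commutation fails, and the lemma is false -- which is why the paper restricts to $D=\bld{1}$.
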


\subsection{Examples of cartesian \texorpdfstring{$\cI$}{I}-monoids} We start out with immediate examples of Construction \ref{cartmon}.

\begin{example}\phantomsection \label{ex:cart-I-monoids}
\begin{enumerate}[(i)]
\item A few pertinent examples include the cartesian $\cI$-monoids $\Sigma_\cI$, given as $\mathrm{Aut}_\cI(*)$ taken in the category of sets under disjoint union, $\capitalGL_\cI(R) = \mathrm{Aut}_\cI(R)$ taken in the category of modules over a ring $R$ under direct sum, or $\mathrm{O}_\cI$ and $\mathrm{U}_\cI$ given by $\mathrm{Aut}_\cI(\mathbb K)$ in the category of inner product spaces over $\mathbb K = \mathbb R$ or $\mathbb C$ under direct sum. Similarly, the category of oriented finite dimensional inner product spaces leads to $\mathrm{SO}_\cI$ and the category of topological spaces under cartesian product yields $\mathrm{Top}_\cI$.
\item The $\cI$-space $\Omega^{\cI}(\mathbb S)$ is a cartesian $\cI$-monoid since
	$\Omega^{\cI}(\mathbb S) = \mathrm{End}_\cI(S^1)$ 
	in the category $\cS_*$, monoidal under the smash product. The monoid $\Omega^\cI(\mathbb S)$ acts on the sphere spectrum $\mathbb S$ by the evaluation maps
	$\mathrm{ev} \colon \Omega^nS^n \times S^n \rightarrow S^n$,
	and the map 
	$\mathrm{ev}^\sharp \colon \Omega^\cI(\mathbb S) \to \Omega^\cI(\mathbb S)$
	from Lemma \ref{dp-map} is the identity. 
\item Using the functoriality of universal covers (for locally contractible connected groups, say), the example $\mathrm{SO}_\cI$ gives rise to $\mathrm{Spin}_\cI$. It is again commutative.
\end{enumerate} 
\end{example}
For our treatment of the Spin-bordism spectrum below, we also need the example $\mathrm{Pin}_\cI$. The Pin-groups, however, do \emph{not} form an $\cI$-commutative cartesian $\cI$-monoid since odd elements of $\mathrm{Pin}(n)$ and $\mathrm{Pin}(k)$ anticommute in $\mathrm{Pin}(n+k)$. Therefore, they cannot arise from the construction above. To capture their commutativity we need to regard them as \emph{$\cI$-supergroups} as in \cite{Stolz-conc}. Let us more generally define supercategories, a linear version of which appears for example in \cite{supercat}.

To this end consider the category of superspaces, i.e., $\bZ^*$-spaces equipped with an invariant map $|\cdot|$ to $\bZ/2$ called the grading. (We use $\bZ^* = \{\pm 1\}$ instead of $\bZ/2$ for the group action to keep the two structures apart.) This category is symmetric monoidal under the superproduct: For two objects $X,Y$ put $X \widehat{\times} Y = X \times_{\bZ^*} Y$ equipped with the residual $\bZ^*$-action and the grading induced by the composite
\[X \times Y \rightarrow \bZ/2 \times \bZ/2 \xrightarrow{+} \bZ/2.\]
The unit is $\bZ^*$ with free action and trivial grading. The non-trivial braiding
\[X \widehat{\times} Y \longrightarrow Y \widehat{\times} X, \quad [x,y] \longmapsto  [(-1)^{|x||y|}y,x] = [y,(-1)^{|x||y|}x]\]
provides the symmetry isomorphism. 

\begin{definition}
A supercategory is a category enriched in superspaces. The enriched product of two supercategories we shall refer to as their superproduct. A (symmetric) monoidal supercategory is just an enriched (symmetric) monoidal supercategory.
\end{definition}

\begin{observation}
The category of $\bZ/2$-graded vector spaces over some field, symmetric monoidal under the tensor product and Koszul-braiding, admits the forgetful functor $V \mapsto V^{\mathrm{ev}} \sqcup V^{\mathrm{odd}}$, with the action of $\bZ^*$ through $-1$ in the base field. This functor admits a tautological lax symmetric monoidal structure. Without further assumptions one has to regard $V^{\mathrm{ev/odd}}$ as a discrete space, but clearly when considering some category of topological vector spaces with a well-behaved tensor product this can promoted to take the topology into account.

In particular, every linear (symmetric) monoidal supercategory as considered in \cite{supercat}, has an underlying (symmetric) monoidal supercategory in our sense. 
\end{observation}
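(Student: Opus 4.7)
My plan is to verify the observation in two stages. First, I would exhibit an explicit lax symmetric monoidal structure on the forgetful functor $F\colon \mathrm{Vect}_k^{\bZ/2} \to \mathrm{SuperSpace}$ sending $V = V^{\mathrm{ev}} \oplus V^{\mathrm{odd}}$ to the superspace $V^{\mathrm{ev}} \sqcup V^{\mathrm{odd}}$ with $\bZ^*$-action by multiplication by $-1 \in k$ and tautological grading. Second, I would invoke change of enrichment along this functor to deduce the consequence for supercategories.

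For step one, the candidate unit constraint is the inclusion $u\colon \bZ^* \to F(k) = k^{\mathrm{ev}}$, $\pm 1 \mapsto \pm 1 \in k$, and the candidate binary constraint is
\[
\mu_{V,W}\colon F(V) \,\widehat{\times}\, F(W) \longrightarrow F(V\otimes W), \qquad [v,w] \longmapsto v\otimes w,
\]
which is well defined because $(-v)\otimes w = v \otimes (-w) = -(v\otimes w)$ by centrality of $-1$ in $k$, equivariant for the residual $\bZ^*$-action, and grade-preserving because the tensor grading is additive. Associativity and unitality of the lax constraints are immediate from the corresponding strict statements for $\otimes$. The core verification is compatibility with the symmetry: chasing a homogeneous element $[v,w]$ around the square relating the superspace braiding $[x,y] \mapsto [(-1)^{|x||y|} y, x]$ to the Koszul braiding $v\otimes w \mapsto (-1)^{|v||w|}w \otimes v$ produces the same element $(-1)^{|v||w|} w \otimes v$ along both paths, so the structure maps assemble into a lax symmetric monoidal functor. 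The topological refinement is a purely formal check: provided scalar multiplication by $-1$ and the tensor product are continuous bifunctors on the chosen category of topological vector spaces, each structure map above is continuous by inspection.

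For step two, I would invoke the standard principle that any lax symmetric monoidal functor $F\colon (\cV,\otimes) \to (\cV',\otimes')$ induces a $2$-functor $F_\ast$ from $\cV$-enriched categories to $\cV'$-enriched categories by applying $F$ to hom-objects and transporting composition and identities through $\mu$ and $u$, and that this $2$-functor preserves (symmetric) monoidal enrichments: the tensor on objects is unchanged, while the enriched functoriality of the tensor and its braiding transport via $\mu$. Applying $F_\ast$ to a linear (symmetric) monoidal supercategory in the sense of \cite{supercat} then produces an underlying (symmetric) monoidal supercategory in the sense of the present paper.

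I expect the only genuine work to be the symmetry check in step one, which forces the Koszul sign to match the sign in the residual $\bZ^*$-action on $\widehat{\times}$; everything else is bookkeeping. The main obstacle is simply to keep the two levels of $\bZ/2$-grading (on vector spaces and on superspaces) in sync while tracing through the pentagons and hexagons for $F_\ast$, but no new conceptual input is required.
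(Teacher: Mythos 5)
The paper states this as an unproved \emph{Observation}, with the word ``tautological'' standing in for the full verification, so there is no argument in the text to compare against. Your proposal correctly supplies the omitted check: the structure maps $u$ and $\mu_{V,W}$ are the right ones, your well-definedness argument on the $\bZ^*$-quotient and the sign-chase through the braiding square are exactly the points that need to be (and are correctly) verified, and the change-of-enrichment principle yields the stated consequence for supercategories.
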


We can now define a supermonoid as a supercategory with one object. This unfolds exactly to the notion considered by Stolz in \cite{Stolz-conc}: It is a (topological) monoid $M$, together with a homomorphism $M \rightarrow \mathbb Z/2$, a central element $c \in M$ in degree $0$ with $c^2 = e$ (namely $-e$, which determines the entire action). A homomorphism of supermonoids is a monoid homomorphism carrying one distinguished element to the other and preserving the gradings. The superproduct of two supermonoids is clearly again a supermonoid. It is given by $M \widehat\times H$ with multiplication
\[[g,h]\cdot[g',h'] = [c^{|g'||h|}gg',hh'].\]
and new distinguished element $[c,e] = [e,d]$. 

In analogy with Definition~\ref{i-comm}, we set:

\begin{definition}
An $\cI$-supermonoid $M$ is $\cI$-commutative if for any injective map $\alpha \colon {\bld k \sqcup \bld l} \rightarrow \bld m$, the diagram 
\[\xymatrix@-1pc{ M(\bld{k}) \widehat{\times} M(\bld{l}) \ar[rrr]^{(\alpha|_{\bld{k}})_* \widehat{\times} (\alpha|_{\bld{l}})_*} \ar[d]_{\mathrm{twist}} & && M(\bld{m}) \widehat{\times} M(\bld{m}) \ar[rrd]^{\mu} & & \\
M(\bld{l})  \widehat{\times} M(\bld{k}) \ar[rrr]^{(\alpha|_{\bld{l}})_* \widehat{\times} (\alpha|_{\bld{k}})_*} & & & M(\bld{m}) \widehat{\times} M(\bld{m}) \ar[rr]^{\mu} & & M(\bld{m}).
}\]
commutes. 
\end{definition}

We emphasize that the left vertical map is the braiding morphism of superspaces and thus contains a sign. In particular, in general $\cI$-commutativity neither passes from an $\cI$-supermonoid $M$ to its underlying cartesian $\cI$-monoid nor vice versa unless the distinguished element $c$ is the unit. Given an $\cI$-supermonoid $G$ one can, however, form the cartesian $\cI$-monoid $PG$, the degreewise quotient by the distinguished central elements, and $G^{\mathrm{ev}}$ consisting of the even elements, both of which are $\cI$-commutative if $G$ is.

Examples of $\cI$-commutative $\cI$-supermonoids arise by an analogous construction as before:

\begin{construction}
Let $(\cC,\tensor,\bld{1})$ be a symmetric monoidal supercategory. For objects $C,D \in \cC$ we set $\mathrm{End}^s_{\cI}(C;D)(\bld{n}) = \mathrm{End}(D \tensor C^{\tensor \bld{n}})$ and let $\mathrm{Aut}^s_{\cI}(C;D)$ be the subobject of invertible elements. With the same structure maps as before, these define $\cI$-supermonoids.
\end{construction}

Abbreviating the case $D= \bld 1$ as before, it is readily checked that $\mathrm{End}^s_{\cI}(C)$ is again $\cI$-commutative.

\begin{example}
As announced the $\cI$-supermonoids $\mathrm{Pin}_\cI$ and $\mathrm{Pin}^c_\cI$ arise as a subobject of $\mathrm{End}^s_\cI(\mathrm{Cl}_1\mathbb K)$ in the supercategory of $\mathbb Z/2$-graded $\mathbb K$-vector spaces under the tensor product with Koszul-signed braiding as follows. Using the standard isomorphisms $\mathrm{Cl}(V) \otimes \mathrm{Cl}(W) \rightarrow \mathrm{Cl}(V \oplus W)$ 
we recover $\mathrm{Cl}_n\mathbb K$ as the right $\mathrm{Cl}_1(\mathbb K)^{\hat\otimes n}$-linear elements of $\mathrm{End}^s_\cI(\mathrm{Cl}_1\mathbb K)(\bld n)$. As usual the $\mathrm{Pin}$- and $\mathrm{Pin}^c$-groups arise as the subgroups thereof that furthermore stabilize $\mathbb K^n \subseteq \mathrm{Cl}_n\mathbb K \cong \mathrm{Cl}_1(\mathbb K)^{\hat\otimes n}$ under twisted conjugation.
\end{example}
\begin{example}\label{ex:OI-and-UI} 
  Consider the spaces $O(L^2(\mathbb R^n,\mathbb R))$ and $U(L^2(\mathbb R^n,\mathbb C))$ of even or odd orthogonal (resp. unitary) operators on the displayed Hilbert spaces, $\mathbb Z/2$-graded by even and odd functions. Note that the maps
  \[L^2(\mathbb R^m) \otimes L^2(\mathbb R^n) \to L^2(\mathbb R^{m+n}), \quad f \otimes g \mapsto \big((x,y) \mapsto f(x)\cdot g(y)\big)\] are isomorphisms. This implies that the spaces $O(L^2(\mathbb R^n,\mathbb R))$ and $U(L^2(\mathbb R^n,\mathbb C))$ are part of cartesian $\cI$-supermonoids $\mathrm{Aut}^s_\cI(L^2(\mathbb R,\mathbb K))$ for $\mathbb K = \mathbb R, \mathbb C$, formed in the category of $\bZ/2$-graded Hilbert spaces over $\mathbb K$ under tensor product. For later purposes, we here use the strong operator topology (often referred to as the point-norm topology in this context) rather than the norm topology. We shall call these cartesian $\cI$-supermonoids $\mathcal O_\cI$ and $\mathcal U_\cI$, respectively.  Their projectivizations will form the basis for our discussion of twisted $K$-theory and twisted Spin-cobordism. They were originally devised by Joachim and the second author in \cite{HeJo} for this purpose.
\end{example}
\begin{example}\label{j} For the definition of the underlying spectra we shall also need the $\cI$-supermonoids formed by the subgroups of right Clifford-linear isometries of
\[L^2(\mathbb R^n,\mathbb K) \otimes \mathrm{Cl}(\mathbb K^n). \] They form a subobject of the $\cI$-supermonoid $\mathrm{Aut}^s_\cI((L^2(\mathbb R,\mathbb K) \otimes \mathrm{Cl}(\mathbb K))$ taken in $\bZ/2$-graded Hilbert spaces under tensor product. We shall call them $\widehat{\mathcal O}_\cI$ and $\widehat{\mathcal U}_\cI$, respectively. As well as containing $\mathcal O_\cI$ or $\mathcal U_\cI$, they receive embeddings of $\cI$-supermonoids
\[j \colon \mathrm{Pin}_\cI \longrightarrow \widehat{\mathcal O}_\cI \quad \text{and} \quad j^c \colon \mathrm{Pin}^c_\cI \longrightarrow \widehat{\mathcal U}_\cI\] given by $p \longmapsto \big(f \otimes c \mapsto (-1)^{|p||f|} f \circ \rho(p)^{-1} \otimes p\cdot c\big)$ with $\rho$ the usual representation of the $\mathrm{Pin}$- and $\mathrm{Pin}^c$-groups on euclidean space (see \cite[Section 3.2]{HeJo} for a verification). These restrict to maps
\[j \colon \mathrm{Spin}_\cI \longrightarrow \widehat{\mathcal O}^{\mathrm{ev}}_\cI \quad \text{and} \quad j^c \colon \mathrm{Spin}^c_\cI \longrightarrow \widehat{\mathcal U}^{\mathrm{ev}}_\cI\]
on the even components for which we use the same name. 
\end{example}
\begin{example}\label{ex:POI} The projectivization of the construction in Example~\ref{ex:OI-and-UI} has a generalization to arbitrary $C^*$-algebras $A$: Using the category of (real or complex) $\mathbb Z/2$-graded $C^*$-algebras with the spatial tensor product, we can consider the $\cI$-monoid defined by $\mathrm{Aut}_{\cI}(A \otimes \cK)$. Here $\mathcal K$ denotes the $C^*$-algebra of graded compact operators on $L^2(\mathbb R,\mathbb K)$ and the automorphism space is again equipped with the strong operator (or point-norm) topology rather than the norm topology. These cartesian $\cI$-monoids were first considered by Dadarlat and Pennig in \cite{DP-I}. For the base fields we have canonical identifications
	\[\mathrm{Aut}_{\cI}(\cK) \cong P\mathcal O_\cI \quad \text{and} \quad \mathrm{Aut}_{\cI}( \cK) \cong P_{S^1}\mathcal U_\cI,\] in the real and complex case, respectively. Here the subscript in the complex case indicates projectivization by dividing out $S^1$ instead of just $-1$, and the identification is induced by the isomorphisms
	\[\mathcal K(H)^{\otimes n} \cong \mathcal K(H^{\otimes n}) \quad \text{and}\quad \mathrm{Aut}(\mathcal K(H)) \cong PO(H) \text{ or } P_{S^1}U(H)\] for any Hilbert space $H$ given by tensoring operators and conjugation, respectively. 
      \end{example}
      \begin{example} Similar to Example~\ref{j}, we shall also need to consider the two variants \[ \mathrm{Aut}_\mathcal I(A \otimes \mathcal K) \longrightarrow \mathrm{Aut}_\mathcal I(A \otimes \mathcal K, A) \longleftarrow \mathrm{Aut}_\mathcal I(\mathcal K,A)\] to compare the outer two terms of the previous example for certain $C^*$-algebras later.
\end{example}

\section{Spin bordism spectra and their twists}\label{sec:spin}
In this section we give a general construction of tautological twists of Thom spectra and apply it to study twists of spin bordism spectra. Furthermore, we establish an Eckmann-Hilton action of the cartesian $\cI$-monoid $\mathrm P \mathcal O_\cI$ on the Spin bordism spectrum. This will serve as a crucial ingredient into the proof of the main theorem.

\subsection{Tautological twists of Thom spectra}\label{section:taut-twists} Let $(R,M)$ be a commutative para\-metrized ring spectrum, let $P$ be a grouplike  commutative $\cI$-space monoid, and let $f \colon \bS^{\cI}_t[P] \rightarrow (R,M)$ be a map of commutative parametrized ring spectra. Applying $\Theta \colon \SpsymR \rightarrow \Spsym{}$ to $f$ provides map of ring spectra
$\bS^{\cI}[P] \longrightarrow \Theta(R,M).$
Composing it with a fibrant replacement in $\cC\Spsym{}$ and using the adjunction $(\bS^{\cI},\Omega^{\cI})$ we get a map 
$P \to \Omega^{\cI}(\Theta(R,M)^{\mathrm{fib}})$ that factors through  
\[ \hat{f}\colon P \to  \GLoneIof(\Theta(R,M)^{\mathrm{fib}}) \]
since $P$ is grouplike.

One standard example of this construction arises as follows: Let $O_{\cI}$ be the $\cI$-commutative cartesian $\cI$-monoid $\bld{n} \mapsto O(n)$ given by the orthogonal groups (see Example~\ref{ex:cart-I-monoids}(i) above), let $H$ be  an $\cI$-commutative cartesian $\cI$-monoid, and let $\theta \colon H \rightarrow O_\cI$ be a map of cartesian $\cI$-monoids. We then consider $B^\times(*,H,\mathrm{O}_\cI)$, the homotopy quotient of the $H$-action on $\mathrm{O}_\cI$. By a slight generalization of Lemma~\ref{lem:commutative-bar-constr-comp-for-M}, it inherits a commutative $\cI$-space monoid structure from $H$ and $\mathrm{O}_\cI$. 

The commutative $\cI$-space monoid $B^{\times}(\mathrm{O}_\cI)$ is the base space of the parametrized spectrum $\gamma^+ = B^\times(*,\mathrm{O}_\cI,\bS)$.  Assuming in addition that  $P = B^\times(*,H,\mathrm{O}_\cI)$ is grouplike, we can then apply the above construction with $(R,M) = (B\theta)^*\gamma^+ \iso B^\times(*,H,\bS)$ and $f$ the composite 
\[\xymatrix{\bS^{\cI}_t[B^\times(*,H,\mathrm{O}_\cI)] \ar[r]^-\cong & B^\times(*,H,\bS^{\cI}_t[O_\cI]) \ar[rr]^-{B^{\times}(*,\mathrm{id},\mathrm{act})} && B^\times(*,H,\bS)} \]
where $\mathrm{act} \colon \bS^{\cI}_t[O_{\cI}] \to \bS$ is the adjoint~\eqref{eq:mu-flat} of the canonical $O_{\cI}$-action on $\bS$. 
In this case we follow the convention of writing ${M}\theta$ for $\Theta(R,M)^{\mathrm{fib}}$. The map 
\[\taut_\theta = \hat{f} \colon B^\times(*,H,\mathrm{O}_\cI) \longrightarrow \GLoneIof{{M}\theta}\]
is our model of the inclusion of the \emph{lower} or {tautological twists} of the Thom spectrum $M\theta$. Applying the naturality of this construction in $H$ to the map $* \to H$ provides a commutative diagram of commutative $\cI$-space monoids 
\[\xymatrix@-1pc{\mathrm{O}_\cI \ar[rr] \ar[d]_{J} && B^\times(*,H,\mathrm{O}_\cI) \ar[d]^{\taut_\theta} \\
  \GLoneIof{\bS} \ar[rr] && \GLoneIof{{M}\theta}.}\]
Applying the bar construction $B$ with respect to ${\boxtimes}$ to (a cofibrant replacement of) this square gives an analogous commutative square of classifying spaces. 

\begin{remark}
Assuming that $H$ is an $\cI$-space monoid (not necessarily cartesian) and replacing all instances of the $\times$- with the $\boxtimes$-products above one obtains a similar construction without assuming $H$ to be commutative. However, for later considerations the present construction is more convenient, so we refrain from carrying this out.
\end{remark}

\subsection{\texorpdfstring{$\mathrm{MSpin}$}{MSpin} and \texorpdfstring{$\mathrm{MSpin}^c$}{MSpin^c}}
We now review the definition of Joachim's operator algebraic models for the $\mathrm{Spin}$- and $\mathrm{Spin}^c$-bordism spectra \cite[Section~6]{Jo-coherence} and explain how the $\cI$-commutative cartesian $\cI$-monoids $P\mathcal O_\cI$ and $P_{S^1}\mathcal U_\cI$ from Example~\ref{ex:POI} act on these spectra.

In our setup, Joachim's models can be constructed from the parametrized spectra
\[\widehat{\mathcal O}^{\mathrm{ev}}_\cI \times_{\mathrm{Spin}_\cI} \bS \quad \text{and} \quad \widehat{\mathcal U}^{\mathrm{ev}}_\cI \times_{\mathrm{Spin}^c_\cI} \bS, \] which are given as the coequalizer of two different action maps
\[\widehat{\mathcal O}^{\mathrm{ev}}_\cI \times {\mathrm{Spin}_\cI} \times \bS \rightrightarrows \widehat{\mathcal O}^{\mathrm{ev}}_\cI \times \bS \quad \text{and} \quad \widehat{\mathcal U}^{\mathrm{ev}}_\cI \times {\mathrm{Spin}^c_\cI} \times \bS \rightrightarrows \widehat{\mathcal U}^{\mathrm{ev}}_\cI \times \bS.\] Here the action on the left factor being via the embeddings $j$ and $j^c$ from Example~\ref{j}, and that on the right factor via the morphisms of cartesian $\cI$-monoids
\[\mathrm{Pin}_\cI \rightarrow \mathrm{Pin}^c_\cI \longrightarrow O_\cI \longrightarrow \GLoneIof{\bS}.\]
 As the action of $\mathrm{Pin}^c(n)$ on $S^n$ factors through $O(n)$ we find the $n$th level of these spectra can also be described as
\begin{equation}\label{eq:P-quotients} ({P}\widehat{\mathcal O}_n\times_{O(n)}S^n, P\widehat{\mathcal O}_n/j{O}(n))\quad \text{and} \quad ({P}_{S^1}\widehat{\mathcal U}_n\times_{O(n)}S^n, P_{S^1}\widehat{\mathcal U}_n/j^c{O}(n)).
\end{equation}
We then set\[\begin{split} \mathrm{MSpin} &=  \Theta({P}\widehat{\mathcal O}_\cI \times_{O_\cI} \bS) \cong \Theta(\widehat{\mathcal O}^{\mathrm{ev}}_\cI \times_{\mathrm{Spin}_\cI} \bS)\quad \text{and}\\ \mathrm{MSpin}^c &= \Theta(P_{S_1}\widehat{\mathcal U}_{\cI}\times_{O_{\cI}} \bS) \cong \Theta(\widehat{\mathcal U}^{\mathrm{ev}}_\cI \times_{\mathrm{Spin}^c_\cI} \bS). 
  \end{split}
\]
Let us restrict the discussion to the case of $\mathrm{MSpin}$, the complex case being entirely analogous.

We note that the base section $P\widehat{\mathcal O}_n/j{O}(n) \rightarrow {P}\widehat{\mathcal O}_n\times_{O(n)}S^n$ is a cofibration: By \cite[Satz~3.13 and Satz~3.26]{tDKaPu70} it suffices to show that its image is given as the vanishing locus of a real valued function and admits a halo that contracts onto it. The three required pieces of data are all readily constructed from the metric given through the structure group.

The parametrized spectrum $\widehat{\mathcal O}^{\mathrm{ev}}_\cI \times_{\mathrm{Spin}_\cI} \bS$ and thus also $\mathrm{MSpin}$ in fact form commutative ring spectra via the concatenation operation $\widehat{\mathcal O}^{\mathrm{ev}}_n \times \widehat{\mathcal O}^{\mathrm{ev}}_m \longrightarrow \widehat{\mathcal O}^{\mathrm{ev}}_{n+m}$ arising from the $\cI$-monoid structure.  The action maps $\mu_n\colon P\mathcal O_n \times \mathrm{MSpin}_n \rightarrow \mathrm{MSpin}_n$ are now simply given by left multiplication on the left factor. It is straightforward to check that they are $\mathrm{MSpin}$-linear.
	
We now claim that the two maps 
\[\mu^\sharp \colon {P}{\mathcal O}_\cI \longrightarrow \GLoneIof{\mathrm{MSpin}} \quad \text{and} \quad \mu^\sharp \colon {P}_{S^1}{\mathcal U}_\cI \longrightarrow \GLoneIof{\mathrm{MSpin}^c}\]
induced from~\eqref{eq:mu-sharp} are in fact equivalent to the tautological ones from Section~\ref{section:taut-twists}. To see this, we observe that there is a commutative diagram
\begin{equation}\label{eq:spin-comparison}\xymatrix@-1pc{\bS_t^\cI[O_\cI \sslash \mathrm{Spin}_\cI] \ar[r] \ar[d]^{\bS_t^\cI[j \sslash j]}& B^\times(*,\mathrm{Spin}_\cI,\bS) \ar[r] \ar[d]^{B^{\times}(*,j,\mathrm{incl})} &  B^\times(*,{O}_\cI,\bS) \ar[d]^{B^{\times}(*,j,\mathrm{incl})}\\
            \bS^\cI_t[P\widehat{\mathcal O}_\cI \sslash \widehat{\mathcal{O}}^{\mathrm{ev}}_\cI] \ar[r]& B^\times(*,\widehat{\mathcal{O}}^{\mathrm{ev}}_\cI,P\widehat{\mathcal O}_\cI \times_{O_\cI} \bS)\ar[r] & B^\times(*,P\widehat{\mathcal O}_\cI,P\widehat{\mathcal O}_\cI\times_{O_\cI} \bS) \\
            \bS_t^\cI[P\mathcal O_\cI] \ar[u] \ar[r]^{\mathrm{proj}}&  {P}\widehat{\mathcal O}_\cI \times_{O_\cI} \bS \ar[r]\ar[u]&  B^\times(*,P\widehat{\mathcal O}_\cI,\mathrm{P}\widehat{\mathcal O}_\cI\times_{O_\cI}\bS). \ar@{=}[u]}
\end{equation}
        with the upper two left horizontal maps given by
\[\bS_t^\cI[O_\cI \sslash \mathrm{Spin}_\cI] \cong B^\times(*,\mathrm{Spin}_\cI,\bS^\cI_t[{O}_\cI]) \xrightarrow{B^{\times}(*,\mathrm{id},\mathrm{act})} B^\times(*,\mathrm{Spin}_\cI,\bS)\]
and 
\[\bS^\cI_t[P\widehat{\mathcal O}_\cI \sslash \widehat{\mathcal{O}}^{\mathrm{ev}}_\cI] \cong B^\times(*,\widehat{\mathcal{O}}^{\mathrm{ev}}_\cI,\bS_t^\cI[P\widehat{\mathcal O}_\cI]) \xrightarrow{B^{\times}(*,\mathrm{id},\mathrm{proj})} B^\times(*,\widehat{\mathcal{O}}^{\mathrm{ev}}_\cI,P\widehat{\mathcal O}_\cI \times_{O_\cI} \bS).\]
Note also that the upper middle and right terms are just the fiberwise one-point-compactifications of the universal vector bundles over $B^{\times}(\mathrm{Spin}_\cI)$ and $B^{\times}(O_\cI)$, respectively.
 
\begin{proposition}\label{prop-good}
The simplicial parametrized spectra in~\eqref{eq:spin-comparison} are all levelwise good (i.e., their degeneracies are cofibrations) and all vertical maps are level equivalences of parametrized spectra. In particular, all horizontal sequences are levelwise homotopy fiber sequences. 
\end{proposition}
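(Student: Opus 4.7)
The plan is to verify the three assertions---levelwise goodness, level equivalence of each vertical map, and the homotopy fibre sequence property of each row---one at a time, in each case reducing to a computation at a fixed spectrum level~$n$ and simplicial degree~$q$, and then assembling via geometric realization.

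Goodness is essentially a bookkeeping check. At spectrum level~$n$, each entry of~\eqref{eq:spin-comparison} is the realization of a simplicial object of retractive spaces of bar-construction form $[q]\mapsto H(\bld{n})^{\times q}\times (E_n,X_n)$ with $H$ one of the cartesian $\cI$-(super)monoids $\mathrm{Spin}_\cI$, $O_\cI$, $\widehat{\mathcal{O}}^{\mathrm{ev}}_\cI$, $P\mathcal{O}_\cI$, or $P\widehat{\mathcal{O}}_\cI$. The degeneracies insert unit elements, so it suffices to verify that each $\{e\}\hookrightarrow H(\bld{n})$ is a closed $h$-cofibration: every group appearing is either a finite-dimensional Lie group or a point-norm automorphism group of a Hilbert (super)module, both classes being well-based ANRs, and products preserve cofibrations in compactly generated weak Hausdorff spaces. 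Together with the cofibrancy of the base sections noted immediately before the proposition (via~\eqref{eq:P-quotients} and the citation to \cite{tDKaPu70}), this yields goodness.

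For the vertical maps, it suffices by goodness to check they are level equivalences in every simplicial degree. The essential geometric input is the Kuiper-type theorem that $\widehat{\mathcal{O}}^{\mathrm{ev}}_n$ and $\widehat{\mathcal{U}}^{\mathrm{ev}}_n$ are contractible in the strong operator topology. From this one extracts two consequences: the principal-bundle sequence $\widehat{\mathcal{O}}^{\mathrm{ev}}_n\to P\widehat{\mathcal{O}}_n\to P\mathcal{O}_n$ identifies $P\mathcal{O}_n$ with $P\widehat{\mathcal{O}}_n\sslash\widehat{\mathcal{O}}^{\mathrm{ev}}_n$, which handles the left-column upward map; and contractibility of the structure group makes the natural inclusion $Y\to B^{\times}(*,\widehat{\mathcal{O}}^{\mathrm{ev}}_n,Y)$ a weak equivalence for any $\widehat{\mathcal{O}}^{\mathrm{ev}}_n$-retractive space~$Y$, which handles the middle-column upward map (applied with $Y=P\widehat{\mathcal{O}}_n\times_{O_n}\bS_n$); the right-column upward map is an equality. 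The downward maps from row one to row two are then obtained by the same fibre-sequence considerations applied to the compatible inclusions $\mathrm{Spin}_n\subset\widehat{\mathcal{O}}^{\mathrm{ev}}_n$ and $O_n\subset P\widehat{\mathcal{O}}_n$, reducing the equivalence to the induced map of bar constructions.

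The horizontal sequences are fibre sequences by direct inspection. After pulling out the common $H(\bld{n})^{\times q}$ factor in each simplicial degree and spectrum level, each row reduces to a variant of the standard sequence controlled by the action map $O_n\times S^n\to S^n$: the leftmost entry is a trivialized total space, the middle is the twisted sphere bundle, and the rightmost the same with an enlarged structure group. These are strict---hence homotopy---fibre sequences of retractive spaces at every simplicial degree, and realization of a levelwise fibre sequence of good simplicial objects is again a fibre sequence, so the realized rows inherit the property. The main obstacle will be the vertical level-equivalence step, as it is the only place the operator-algebraic input---the Kuiper-type contractibility of Hilbert-module automorphism groups in the point-norm topology---genuinely enters; the remainder is formal manipulation of bar constructions in the framework developed in the previous sections.
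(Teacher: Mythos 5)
Your proposal follows the same overall blueprint as the paper (goodness, then vertical level equivalences via Kuiper, then the fibration claim for the rows), but two of the three steps have genuine gaps.

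For goodness you reduce to well-pointedness of the groups involved and then assert that ``point-norm automorphism groups of a Hilbert (super)module'' are well-based ANRs. That assertion is not justified. In the strong operator (point-norm) topology these groups are not open subsets of Banach algebras, and there is no obvious reason for them to be ANRs---indeed the paper does not claim this. Well-pointedness in this topology is a nontrivial fact: the paper's argument first invokes Schur's lemma together with the observation that each irreducible Clifford representation occurs with infinite multiplicity in $L^2(\mathbb R^n)\otimes\mathrm{Cl}(\mathbb R^n)$ to identify $\widehat{\mathcal O}^{\mathrm{ev}}_n$ (up to an at most $2$-sheeted cover) with a product of groups of the form $O(H)$, $U(H)$, $Sp(H)$ for suitable infinite-dimensional Hilbert spaces $H$, and then cites a specific result of Dadarlat--Pennig on the well-pointedness of their projectivizations. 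Without this reduction, the claim you need is simply unsupported.

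For the vertical maps, the ``principal-bundle sequence'' $\widehat{\mathcal O}^{\mathrm{ev}}_n \to P\widehat{\mathcal O}_n \to P\mathcal O_n$ you appeal to does not exist: $\widehat{\mathcal O}^{\mathrm{ev}}_n$ is not a subgroup of $P\widehat{\mathcal O}_n$ (only its projectivization is), and even correcting for that, $P\widehat{\mathcal O}_n/P\widehat{\mathcal O}^{\mathrm{ev}}_n$ is discrete of order two, not $P\mathcal O_n$. The identification the paper actually uses, and which you would need to establish, is that the $j$-induced map
\[
\widehat{\mathcal O}^{\mathrm{ev}}_\cI \times_{\mathrm{Spin}_\cI} O_\cI \longrightarrow P\widehat{\mathcal O}_\cI
\]
is an isomorphism; this is what lets one rewrite the left column's homotopy quotients as $E\mathrm{Spin}_\cI\times_{\mathrm{Spin}_\cI}O_\cI \to E\widehat{\mathcal O}^{\mathrm{ev}}_\cI\times_{\widehat{\mathcal O}^{\mathrm{ev}}_\cI}P\widehat{\mathcal O}_\cI \leftarrow P\widehat{\mathcal O}_\cI$ and then apply Kuiper's theorem (in the strong topology) for the right-hand map. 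Your middle- and right-column reductions via contractibility of the structure group are fine in spirit, but they also lean on the same unverified identification once one tries to compare specific total spaces, so this gap propagates.

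Your direct treatment of the rows as strict fiber sequences, realized to homotopy fiber sequences via goodness, is a reasonable alternative to the paper's slicker observation that the lower rows inherit the fiber-sequence property from the top row once the vertical level equivalences are in place; but it of course only becomes available once goodness has actually been established.
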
            
           
\begin{proof}
To see the goodness assertion we need only show that that the groups $P\widehat{\mathcal{O}}^{\mathrm{ev}}_n$ are well-pointed. For $n = 0$ there is nothing to do, so suppose $n>0$. In the topology under consideration it follows for example from \cite[Proposition 2.26]{DP-III} (for $A$ the base field), and its real and quaternionic analogues, that the groups $PO(H), P_{S^1}U(H)$ and $P_{S^3}Sp(H)$ for a real, complex or quaternionic Hilbert space $H$, respectively, are all well-pointed. But by Schur's lemma $\widehat{\mathcal{O}}^{\mathrm{ev}}_n$ is isomorphic to a product of at most two factors of the form $O(H), U(H)$ or $Sp(H)$, depending on the representation type of the Clifford algebra; the irreducible representations appear infinitely often in $L^2(\mathbb R^n,\mathbb R) \otimes \mathrm{Cl}(\mathbb R^n)$ as the left factor contains an infinite dimensional summand with trivial $\mathrm{Cl}(\mathbb R^n)$-action, namely the radially symmetric functions. Therefore $P\widehat{\mathcal{O}}^{\mathrm{ev}}_n$ is at most $2$-sheeted cover over a well-pointed space and thus well-pointed itself.

Now the inclusion $j\colon \mathrm{Spin}_\cI \!\to \widehat{\mathcal O}^{\mathrm{ev}}_\cI$ induces an isomorphism $\widehat{\mathcal O}^{\mathrm{ev}}_\cI \!\times_{\mathrm{Spin}_\cI}\! O_\cI \to P\widehat{\mathcal O}_\cI$. Using this isomorphism, the left most column can be rewritten as the fiberwise suspension spectra of the maps
\[E\mathrm{Spin}_\cI \times_{\mathrm{Spin}_\cI} O_\cI \rightarrow E\widehat{\mathcal O}^{\mathrm{ev}}_\cI \times_{\mathrm{Spin}_\cI} O_\cI \cong E\widehat{\mathcal O}^{\mathrm{ev}}_\cI \times_{\widehat{\mathcal O}^{\mathrm{ev}}_\cI} P\widehat{\mathcal O}_\cI \leftarrow P\widehat{\mathcal O}_\cI\]
with the outer maps induced by $j$ and inclusions. The left map is evidently an equivalence and for the right one it follows immediately from Kuiper's theorem, that $\widehat{\mathcal O}^{\mathrm{ev}}_n$ is contractible (this holds in the strong instead of the norm topology by \cite[Proposition 10.8.2]{Dixmier-book}).

For the middle and right column the total spaces are also sphere bundles, albeit no longer trivial. We can rewrite the total spaces of these columns as 
\[E\mathrm{Spin}_\cI \times_{\mathrm{Spin}_\cI} \bS \rightarrow E\widehat{\mathcal O}^{\mathrm{ev}}_\cI \times_{\mathrm{Spin}_\cI} \bS \cong E\widehat{\mathcal O}^{\mathrm{ev}}_\cI \times_{\widehat{\mathcal O}^{\mathrm{ev}}_\cI} P\widehat{\mathcal O}_\cI\times_{O_\cI} \bS \leftarrow P\widehat{\mathcal O}_\cI\times_{O_\cI} \bS\]
and $EO_\cI \times_{O_\cI} \bS \to EP\widehat{\mathcal O}_\cI\times_{O_\cI} \bS$. Then the same reasoning applies. Clearly, all of these equivalences are compatible with the structure maps. The statement about the rows follows since it is true for the top one.
\end{proof}
In the sequel, we shall write $M\mathrm{Spin} =  \Theta B^\times(*,\mathrm{Spin}_\cI,\bS)$ for the standard model of the Spin-bordism spectrum. We stress that the font of the letter $M$ distinguishes it notationally from Joachim's model $\mathrm{MSpin} = \Theta({P}\widehat{\mathcal O}_\cI \times_{O_\cI} \bS)$ considered earlier.  The last proposition implies:
\begin{corollary}\label{cor:Mspin-comparison} There is a zig-zag of level equivalences between commutative symmetric ring spectra relating $M\mathrm{Spin}$ and $\mathrm{MSpin}$. \qed
\end{corollary}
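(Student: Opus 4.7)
The plan is to harvest the zig-zag from the middle column of diagram~\eqref{eq:spin-comparison} and apply the functor $\Theta$. By construction, the top object $B^\times(\ast,\mathrm{Spin}_\cI,\bS)$ yields $M\mathrm{Spin}$ after $\Theta$, and the bottom object $P\widehat{\mathcal O}_\cI \times_{O_\cI} \bS$ yields $\mathrm{MSpin}$; the middle object $B^\times(\ast,\widehat{\mathcal O}^{\mathrm{ev}}_\cI, P\widehat{\mathcal O}_\cI \times_{O_\cI} \bS)$ serves as the intermediate term of the zig-zag. Proposition~\ref{prop-good} has already established that both vertical maps in this column are level equivalences of parametrized symmetric spectra.

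The remaining task is to upgrade this to a zig-zag of \emph{commutative symmetric ring spectra}. Each of the three terms inherits a commutative ring structure from the $\cI$-commutativity of the respective acting monoid via the generalization of Lemma~\ref{lem:commutative-bar-constr-comp-for-M} to the module case: $\mathrm{Spin}_\cI$ acts on $\bS$, $\widehat{\mathcal O}^{\mathrm{ev}}_\cI$ acts on $P\widehat{\mathcal O}_\cI \times_{O_\cI} \bS$ (through the projection onto $P\widehat{\mathcal O}_\cI$ and then by left multiplication), and $P\widehat{\mathcal O}_\cI$ acts on $P\widehat{\mathcal O}_\cI \times_{O_\cI} \bS$. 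I would then argue that the two vertical maps are multiplicative: the upper one is $B^\times$ applied to the $\cI$-supermonoid map $j\colon \mathrm{Spin}_\cI \to \widehat{\mathcal O}^{\mathrm{ev}}_\cI$ together with the section inclusion $\bS \to P\widehat{\mathcal O}_\cI \times_{O_\cI} \bS$, while the lower one is the inclusion of the $0$-simplex of the simplicial bar construction; both respect the relevant Eckmann--Hilton structures and hence descend to commutative ring maps.

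Finally, since $\Theta$ is strong symmetric monoidal, it carries this zig-zag to a zig-zag of commutative symmetric ring spectra in $\Spsym{}$. The goodness analysis in the proof of Proposition~\ref{prop-good} already verifies that the base sections in each level are cofibrations, so the $\Theta$-images of the two level equivalences in $\SpsymR$ remain level equivalences in $\Spsym{}$. This yields the desired zig-zag.

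The genuinely subtle point, and the only real obstacle, is the multiplicative bookkeeping in the middle term, because $\widehat{\mathcal O}^{\mathrm{ev}}_\cI$ acts on $P\widehat{\mathcal O}_\cI \times_{O_\cI} \bS$ only through its quotient $P\widehat{\mathcal O}_\cI$ and because $\mathrm{Spin}_\cI$ is most naturally an $\cI$-supermonoid rather than a plain cartesian $\cI$-monoid. Once one checks that the Eckmann--Hilton condition of Definition~\ref{def:EH-for-R} holds for each of these three actions (which reduces to the $\cI$-commutativity of the $\cI$-(super)monoids involved and the fact that the projection and the section inclusion are monoid maps), the rest of the argument is formal.
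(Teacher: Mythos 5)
Your proposal matches the paper's implicit argument: the corollary is stated with a \qed precisely because it follows from Proposition~\ref{prop-good} by applying the strong symmetric monoidal functor $\Theta$ to the middle column of diagram~\eqref{eq:spin-comparison}, and the paper has already supplied the commutative ring structures on the relevant terms via the concatenation of the $\cI$-monoid $\widehat{\mathcal O}^{\mathrm{ev}}_\cI$. One small imprecision: you describe the ring structure on the bottom term $P\widehat{\mathcal O}_\cI \times_{O_\cI}\bS$ as ``inherited'' from a $P\widehat{\mathcal O}_\cI$-action via a bar construction, but this term is not a bar construction (as you yourself correctly note when calling the bottom vertical arrow a $0$-simplex inclusion); its ring structure comes directly from the concatenation on $\widehat{\mathcal O}^{\mathrm{ev}}_\cI$, as the paper spells out, and your middle and top terms then receive their ring structures because the relevant actions are Eckmann--Hilton over that base ring — the rest of your reasoning goes through unchanged.
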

\begin{remark} This comparison seems to not occur in the literature so far. In particular, it shows that Joachim's spectrum $\mathrm{MSpin}$ carries the `correct' $E_{\infty}$ structure.
\end{remark}

We now apply the construction of Section \ref{section:taut-twists} to the left hand horizontal maps in~\eqref{eq:spin-comparison}. For this we note that we can drop the fibrant replacements of the spectra  $M\mathrm{Spin}$ and $\mathrm{MSpin}$ since these result from orthogonal spectra and are thus semistable. The construction provides a diagram of commutative $\cI$-space monoids
\begin{equation}\label{taut vs DK}
\xymatrix@-1pc{\mathrm{O}_\cI \sslash \mathrm{Spin}_\cI \ar[r]^{\taut_\mathrm{Spin}} \ar[d] & \GLoneIof{M\mathrm{Spin}} \ar[d]\\
P\widehat{\mathcal O}_\cI \sslash \widehat{\mathcal{O}}^{\mathrm{ev}}_\cI \ar[r] & \GLoneIof{(\Theta B^\times(*,\widehat{\mathcal{O}}^{\mathrm{ev}}_\cI,P\widehat{\mathcal O}_\cI \times_{O_\cI} \bS))^{\mathrm{fib}}} \\
P\mathcal O_\cI \ar[r]^{\mathrm{proj}^\ttt}\ar[u] & \GLoneIof{\mathrm{MSpin}}. \ar[u]}
\end{equation} 
Its vertical maps are level equivalences by Proposition~\ref{prop-good}. Let $\mu \colon P\mathcal O_\cI \times \mathrm{MSpin} \rightarrow \mathrm{MSpin}$ be the action constructed above. Unwinding definitions we now find that $\mathrm{proj}^\ttt$ equals the  map $\mu^\sharp$ from~\eqref{eq:mu-flat}. This equality is ultimately the heart of our comparison of operator algebraic and homotopical twisted $K$-theory. For now, denoting by $(-)^{\mathrm{cof}}$ cofibrant replacements of the maps above and suppressing the passage to singular complexes  from the notation, we obtain:

\begin{corollary}\label{comparison MSpin}
There are zig-zags of local equivalences  
\begin{align*} (B^{\boxtimes}(\taut_{\mathrm{Spin}}^{\mathrm{cof}}))^*\gamma_{M\mathrm{Spin}} & \simeq (B^{\boxtimes}(\mu^{\mathrm{cof}}))^*\gamma_\mathrm{MSpin} \quad \text{and}\\ (B^{\boxtimes}(\taut^{\mathrm{cof}}_{\mathrm{Spin}^c}))^*\gamma_{M\mathrm{Spin}^c} &\simeq (B^{\boxtimes}(\mu^{\mathrm{cof}}))^*\gamma_\mathrm{MSpin^c}
\end{align*}
of commutative parametrized ring spectra. \qed
\end{corollary}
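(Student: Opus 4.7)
The plan is to pull back the universal line bundle along each row of diagram \eqref{taut vs DK} (and its $\mathrm{Spin}^c$ analogue) and then connect the resulting parametrized ring spectra by the zig-zag supplied by the vertical level equivalences of Proposition~\ref{prop-good}. Throughout, I work up to functorial cofibrant replacement in commutative $\cI$-space monoids and use that $B^{\boxtimes}$ preserves $\cI$-equivalences between cofibrant commutative $\cI$-space monoids.

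First, I would apply cofibrant replacement to the three horizontal maps of \eqref{taut vs DK} in the category of arrows of $\cC\cS^{\cI}$, and then apply $B^{\boxtimes}$. Since the vertical maps of \eqref{taut vs DK} are level equivalences of commutative $\cI$-space monoids, the resulting diagram has $\cI$-equivalences in its left column connecting $B^{\boxtimes}(\mathrm{O}_\cI \sslash \mathrm{Spin}_\cI)^{\mathrm{cof}}$, $B^{\boxtimes}(P\widehat{\mathcal O}_\cI \sslash \widehat{\mathcal O}^{\mathrm{ev}}_\cI)^{\mathrm{cof}}$ and $B^{\boxtimes}(P\mathcal O_\cI)^{\mathrm{cof}}$, and in its right column $\cI$-equivalences connecting the classifying spaces of the units of $M\mathrm{Spin}$, of $(\Theta B^\times(*,\widehat{\mathcal O}^{\mathrm{ev}}_\cI,P\widehat{\mathcal O}_\cI\times_{O_\cI}\bS))^{\mathrm{fib}}$, and of $\mathrm{MSpin}$.

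Second, I would invoke naturality of the universal $R$-line bundle construction $R \mapsto \gamma_R$. A level equivalence $R \to R'$ of positive fibrant commutative symmetric ring spectra induces an $\cI$-equivalence $\GLoneIof{R} \to \GLoneIof{R'}$, and via cofibrant replacement this lifts to a map $G \to G'$ on units inducing a local equivalence of the two-sided bar constructions $B^{\barsm}(\bS,\bS^{\cI}_t[G],R) \to B^{\barsm}(\bS,\bS^{\cI}_t[G'],R')$, hence of their fibrant replacements $\gamma_R \to \gamma_{R'}$. Applying this to the commutative ring spectrum level equivalences supplied by Proposition~\ref{prop-good} identifies $\gamma_{M\mathrm{Spin}}$, $\gamma_{(\Theta B^\times(\ldots))^{\mathrm{fib}}}$ and $\gamma_\mathrm{MSpin}$ in the homotopy category of commutative parametrized ring spectra. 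Pulling these equivalences back along the cofibrant replacements of the three horizontal maps yields the desired zig-zag between $(B^{\boxtimes}(\taut_\mathrm{Spin}^{\mathrm{cof}}))^*\gamma_{M\mathrm{Spin}}$ and $(B^{\boxtimes}(\mathrm{proj}^{\ttt,\mathrm{cof}}))^*\gamma_\mathrm{MSpin}$ passing through the middle row. The crucial observation already made just before the corollary is that $\mathrm{proj}^{\ttt}$ agrees with $\mu^{\sharp}$ from \eqref{eq:mu-sharp} applied to the $P\mathcal O_\cI$-action on $\mathrm{MSpin}$, so after one further cofibrant replacement this latter pullback is precisely $(B^{\boxtimes}(\mu^{\mathrm{cof}}))^*\gamma_\mathrm{MSpin}$.

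The main technical obstacle is the interplay of cofibrant and fibrant replacements on both sides of the map $P \to \GLoneIof{R}$ used to form the pullback of $\gamma_R$: one must perform these replacements functorially and compatibly in both variables so that the local equivalences produced at each step commute with the pullback functors $f^*$, and so that the resulting morphisms are genuine morphisms of commutative parametrized ring spectra in $\SpsymR$. This is handled by combining the Quillen equivalences $(f_!,f^*)$ for $\cI$-equivalences $f$ established in \cite{HSS-retractive} with the fact that the middle column of \eqref{taut vs DK} is a zig-zag of level equivalences of commutative parametrized ring spectra, so every square in sight becomes commutative on the nose after suitable (co)fibrant replacement. The $\mathrm{Spin}^c$ case follows by replacing $P\widehat{\mathcal O}_\cI$, $\widehat{\mathcal O}^{\mathrm{ev}}_\cI$, $P\mathcal O_\cI$ and $\mathrm{Spin}_\cI$ by their complex analogues throughout.
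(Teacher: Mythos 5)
Your proposal is correct and reconstructs essentially the argument the paper leaves implicit: the commutative square \eqref{taut vs DK} with its vertical level equivalences (Proposition~\ref{prop-good}), together with the observation that $\mathrm{proj}^\ttt = \mu^\sharp$, supplies the zig-zag after pulling back universal line bundles. One small caveat: you state the naturality of $\gamma_R$ for \emph{positive fibrant} commutative ring spectra, but $M\mathrm{Spin}$ and $\mathrm{MSpin}$ are not positive fibrant; the paper relies on the weaker hypothesis that semistability and level-fibrancy suffice (noted right before \eqref{taut vs DK}, where it is pointed out that these spectra come from orthogonal spectra and hence are semistable). With that adjustment your argument matches the paper's intent exactly.
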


\begin{remark}
We also obtain a comparison zig-zag between $(B^{\boxtimes}(\mu^{\mathrm{cof}}))^*\gamma_\mathrm{MSpin}$ and $\mathrm{MSpin} \sslash P\mathcal O_\cI$ from \eqref{eq:bar-constr-comparison}. It seems likely that it consists of local equivalences as well. Since $\mathrm{MSpin}$ is far from being positive fibrant as a symmetric spectrum, this is, however, not implied by Corollary \ref{comparison corollary}. As the above corollary suffices for the analogous comparison in $K$-theory, we have refrained from working out further details.
\end{remark}

\section{Models of twisted \texorpdfstring{$K$}{K}-theory spectra}\label{finalsec}
We are now ready to establish our models for twisted $K$-theory spectra. In particular, we establish Theorem~\ref{thm:d} in Section~\ref{comp-geo-homo} and Theorem~\ref{thm:e} in Corollaries~\ref{tata} and~\ref{tata2}.

\subsection{Comparison of pairings in twisted  \texorpdfstring{$K$}{K}-theory} We work in the setup of Example~\ref{ex:POI}. For a $C^*$-algebra $A$ the $\cI$-monoid $\mathrm{Aut}_\cI(\mathcal K,A)$ acts on the symmetric $K$-theory spectrum $\mathrm K_A$ of \cite[Definition 4.9]{Jo-kspectra} given by
 \[(\mathrm K_A)_\bld n = \mathrm{Hom}_{*}(\widehat{\mathrm S}, A \otimes \mathcal K(L^2(\mathbb R^n)) \otimes \mathrm{Cl}_n)\]
through post composition; here $\widehat{\mathrm S}$ denotes the bi-algebra $C_0(\mathbb R)$ of continuous functions vanishing at infinity. Both $\mathrm{Aut}_\cI(\mathcal K,A)$ and $\mathrm K_A$ are positive fibrant for any $A$: For $\mathrm{Aut}_\cI(\mathcal K,A)$ this follows immediately from the existence of an isomorphism $\mathcal K \otimes \mathcal K \cong \mathcal K$, conjugation with which is homotopic to $- \otimes \mathrm{id}_\mathcal K$, and for $\mathrm K_A$ this is proved in~\cite[Theorem~4.10]{Jo-kspectra}. 

However, the groups $\mathrm{Aut}_\cI(\mathcal K,A)$ are not generally well-pointed, for example when $A = C(X)$ for some pathological compact Hausdorff space $X$, so we shall replace them by the realizations $s\mathrm{Aut}_\cI(\mathcal K,A)$ of their singular complexes throughout. Whenever $\mathrm{Aut}_\cI(\mathcal K,A)$ happens to be well-pointed, as in the case $A = \mathbb K$ or more generally in the case of strongly selfabsorbing algebras below, the spectra $\mathrm K_A \sslash \mathrm{Aut}_\cI(\mathcal K,A)$ and $\mathrm K_A \sslash s\mathrm{Aut}_\cI(\mathcal K,A)$ are locally equivalent by Lemma \ref{lem: simp-top} and no construction we make will be sensitive to the change. 

Now the coalgebra structure of the suspension algebra provides pairings
 \[\big(\mathrm K_A \sslash s\mathrm{Aut}_\cI(\mathcal K,A)\big) \barsm \big(\mathrm K_B \sslash s\mathrm{Aut}_\cI(\mathcal K,B)\big) \longrightarrow \mathrm K_{A \otimes B} \sslash s\mathrm{Aut}_\cI(\mathcal K,A \otimes B)\]
 by tensoring homomorphisms: They are induced by
\begin{align*}
(s\mathrm{Aut}_\cI(\mathcal K,A)^{\times q} \times \mathrm K_A) &\barsm  (s\mathrm{Aut}_\cI(\mathcal K,B)^{\times q} \times \mathrm K_B) \\
&\longrightarrow (s\mathrm{Aut}_\cI(\mathcal K,A) \boxtimes s\mathrm{Aut}_\cI(\mathcal K,B))^{\times q} \times (\mathrm K_A \sm \mathrm K_B) \\
&\longrightarrow s\mathrm{Aut}_\cI(\mathcal K,A \otimes B)^{\times q} \times \mathrm K_{A \otimes B}
\end{align*}
with the first map an instance of the distributivity morphism~\eqref{eq:distr} and the second map induced by the evident product maps.
In particular, for $A$ the base field we obtain commutative parametrized ring spectra 
 \[\mathrm{KO} \sslash P\mathcal O_n \simeq \mathrm K_\mathbb R \sslash s\mathrm{Aut}_\cI(\mathcal K)  \quad \text{and} \quad \mathrm{KU} \sslash P_{S^1}\mathcal U_n \simeq \mathrm K_\mathbb C \sslash s\mathrm{Aut}_\cI(\mathcal K)\]
 over which $\mathrm K_A \sslash s\mathrm{Aut}_\cI(\mathcal K,A)$ is a module, since the action of $s\mathrm{Aut}_\cI(\mathcal K,A)$ on $\mathrm K_A$ is $\mathrm{KO}$- or $\mathrm{KU}$-linear, respectively, and satisfies the Eckmann--Hilton condition of Definition~\ref{def:EH-for-R} in case of the base fields. 

 We abbreviate the homotopical twisted $K$-theory groups $(K_A \sslash \!s\mathrm{Aut}_\cI(\mathcal K,A))^i(X,\!\tau)$ to $K_A^i(X,\tau)$ and refer to~\cite[Definition~7.10]{HSS-retractive} for their definition in the setup of parametrized symmetric spectra. To compare this version of twisted $C^*$-$K$-theory to the operator algebraic one, we denote by $\Gamma(X,\xi)$ the $C^*$-algebra of sections for a bundle $\xi$ of $C^*$-algebras over a compact space $X$.

\begin{proposition}\label{prop:k-cycles}
For any $C^*$-algebra $A$, there is a canonical natural isomorphism
\[K_A^i(X,\tau) \rightarrow K_{-i}\Gamma(X,\tau^*\gamma^n_{A})\]
for $\tau \colon X \rightarrow Bs\mathrm{Aut}_\cI(\mathcal K,A)_\bld n$  with $n \geq 1$ and $X$ a finite cell complex, where 
\[\gamma^n_{A} = B^\times(*, s\mathrm{Aut}_\cI(\mathcal K,A)_\bld n, A \otimes  \mathcal K^{\otimes n})\] is the associated bundle of $C^*$-algebras. Furthermore, the diagram
\[\xymatrix@-1pc{K_A^i(X,\tau) \otimes K_B^j(Y,\sigma) \ar[r]^-\times \ar[d] & K_{A\otimes B}^{i+j}(X \times Y, \tau \otimes \sigma) \ar[d] \\
             K_{-i}(\Gamma(X,\tau^*\gamma^n_A)) \otimes K_{-j}(\Gamma(Y,\sigma^*\gamma^m_A)) \ar[r]^-\boxtimes& K_{-(i+j)}(\Gamma(X \times Y,(\tau \otimes \sigma)^*\gamma^{n+m}_{A \otimes B}))}\]
commutes, where the $\boxtimes$ on the lower horizontal arrow denotes the exterior pairing in $K$-theory applied to the pairing of $C^*$-algebras given by tensoring sections.
\end{proposition}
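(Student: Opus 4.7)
The plan is to first identify the parametrized spectrum $K_A \sslash s\mathrm{Aut}_\cI(\mathcal K,A)$ in each level with a bundle of $\ast$-homomorphism spaces, then translate sections into $C^{\ast}$-algebra homomorphisms via an exponential law, and finally appeal to Joachim's representability theorem for $K$-theory of $C^{\ast}$-algebras from~\cite{Jo-kspectra}.

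\textbf{Step 1 (Identification of the bundle).} By construction, the level $n$ retractive space of $\tau^{\ast}\bigl(\mathrm K_A \sslash s\mathrm{Aut}_\cI(\mathcal K,A)\bigr)$ is the Borel construction of the action of $s\mathrm{Aut}_\cI(\mathcal K,A)_{\bld n}$ on the based mapping space $\mathrm{Hom}_{\ast}(\widehat{\mathrm S}, A \otimes \mathcal K(L^2(\mathbb R^n)) \otimes \mathrm{Cl}_n)$, pulled back along $\tau$. Identifying $\mathcal K(L^2(\mathbb R^n)) \cong \mathcal K^{\otimes n}$ and using that $s\mathrm{Aut}_\cI(\mathcal K,A)_{\bld n}$ acts by post-composition through its canonical map to $\mathrm{Aut}(A \otimes \mathcal K^{\otimes n})$, this level is precisely the bundle of based $\ast$-homomorphism spaces
\[
\mathrm{Hom}^{\mathrm{bun}}_{\ast}\bigl(\widehat{\mathrm S}, \tau^{\ast}\gamma^n_A \otimes \mathrm{Cl}_n\bigr) \longrightarrow X
\]
with its tautological section given by the zero homomorphism.

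\textbf{Step 2 (From sections to homomorphisms).} For $X$ a finite cell complex, the exponential law for $\ast$-homomorphisms from the separable $C^{\ast}$-algebra $\widehat{\mathrm S}$ identifies continuous sections of this bundle with $\ast$-homomorphisms $\widehat{\mathrm S} \to \Gamma(X, \tau^{\ast}\gamma^n_A \otimes \mathrm{Cl}_n)$; this uses that $\Gamma(X, -)$ is symmetric monoidal for tensoring with constant bundles and that a section $s$ gives a map whose value at $f \in \widehat{\mathrm S}$ is the section $x \mapsto s(x)(f)$. Stabilizing with respect to the structure maps of the parametrized spectrum, and observing that these structure maps are precisely Joachim's Bott maps tensored over the base, we obtain that the homotopy classes of stable sections of $\tau^{\ast}\bigl(\mathrm K_A \sslash s\mathrm{Aut}_\cI(\mathcal K,A)\bigr)$ compute $K_0\bigl(\Gamma(X,\tau^{\ast}\gamma^n_A)\bigr)$ by \cite[Theorem~4.10]{Jo-kspectra} applied to the $C^{\ast}$-algebra $\Gamma(X,\tau^{\ast}\gamma^n_A)$. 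Shifting by $i$ (and smashing with simplicial spheres as in the definition of twisted cohomology from \cite[Definition~7.10]{HSS-retractive}) gives the claimed isomorphism $K_A^i(X,\tau) \cong K_{-i}\Gamma(X,\tau^{\ast}\gamma^n_A)$. Naturality in $(X,\tau)$ is automatic from the construction since both sides are functorial in maps of the base and pullbacks of the twist.

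\textbf{Step 3 (Multiplicativity).} To check that the square of pairings commutes, one unwinds both pairings to a common point-set level description. The top pairing is defined via the external smash product of parametrized spectra, which in level $(n,m)$ is induced by the distributivity morphism~\eqref{eq:distr} together with the map of $C^{\ast}$-algebras
\[
\bigl(A \otimes \mathcal K^{\otimes n} \otimes \mathrm{Cl}_n\bigr) \otimes \bigl(B \otimes \mathcal K^{\otimes m} \otimes \mathrm{Cl}_m\bigr) \longrightarrow (A \otimes B) \otimes \mathcal K^{\otimes (n+m)} \otimes \mathrm{Cl}_{n+m}
\]
and the coalgebra structure $\widehat{\mathrm S} \to \widehat{\mathrm S} \otimes \widehat{\mathrm S}$. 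On the level of homomorphism spaces, this is precisely the operation of tensoring two $\ast$-homomorphisms. Under the identification of Step~2, this tensoring corresponds to the exterior product in $C^{\ast}$-algebra $K$-theory induced by the pairing $\Gamma(X,\tau^{\ast}\gamma_A^n) \otimes \Gamma(Y,\sigma^{\ast}\gamma_B^m) \to \Gamma(X \times Y,(\tau \otimes \sigma)^{\ast}\gamma_{A\otimes B}^{n+m})$ given by tensoring sections, which is exactly the lower horizontal arrow of the diagram. Hence commutativity is formal once Step~2 is in place.

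\textbf{Main obstacle.} The principal technical issue is Step~2: justifying rigorously that homotopy classes of stable sections of the parametrized spectrum compute the $K$-theory of the global section algebra. The difficulty is twofold — first, checking that Joachim's representability theorem is compatible with the parametrized setup in the sense that $\Omega$-spectrum fibrancy at the level of fibers passes to a suitable fibrancy of the total parametrized spectrum after pulling back to a finite cell complex, and second, that the exponential law used to pass between sections and section-valued homomorphisms is compatible with the strong operator / point-norm topology in which our automorphism groups act. Both points are more or less standard but require some care, and for the latter, one may wish to reduce to a dense subalgebra of $\widehat{\mathrm S}$ or use the fact that $\widehat{\mathrm S}$ is separable and that $X$ is compact.
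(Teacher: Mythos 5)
Your proposal captures the correct geometric intuition, and in fact Steps 1 and 2 are essentially a paraphrase of Pennig's argument for \cite[Theorem~2.7(c)]{DP-II}, which the paper's proof \emph{cites verbatim} rather than re-derives. However, the bulk of the paper's actual proof is concerned with something you skip: the definition of $K_A^i(X,\tau)$ in \cite[Definition~7.10]{HSS-retractive} is $\pi_{-i}\bigl(\mathbb R\Gamma\,\mathbb R\tau_\cI^*(\mathrm K_A\sslash s\mathrm{Aut}_\cI(\mathcal K,A))\bigr)$, where $\tau_\cI\colon X_\cI\to Bs\mathrm{Aut}_\cI(\mathcal K,A)$ is the $\cI$-spacification of $\tau$ and $\mathbb R\Gamma$, $\mathbb R\tau_\cI^*$ are \emph{derived} functors. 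Your Step~1 begins with ``the level~$n$ retractive space of $\tau^*(\mathrm K_A\sslash s\mathrm{Aut}_\cI(\mathcal K,A))$,'' but $\tau^*$ on its own is not even defined in the parametrized framework: one must first replace $\tau$ by $\tau_\cI$, then take derived pullback and derived sections. The substance of the paper's proof is precisely the reduction from this abstract formula to an underived level computation: it uses the comparison map $p\colon X_\cI\to\const_\cI X^{\mathrm{fib}}$, positive fibrancy of $\mathrm K_A\sslash s\mathrm{Aut}_\cI(\mathcal K,A)$, the identity \cite[(7.29)]{HSS-retractive} for $\mathbb R\Gamma$ over a constant $\cI$-space, and the homotopy invariance of $\Gamma$ on Serre fibrations over cofibrant bases, eventually landing on $\pi_{m-i}\Gamma(X,\tau^*_{\bld m}E_{\bld m})$ for $m$ large. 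Only at that point is Pennig's argument invoked.

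So while you correctly flag in your ``Main obstacle'' that passing from stable sections to $C^*$-algebra $K$-theory is the technical crux and that $\Omega$-fibrancy must be propagated, you locate the issue in Joachim's representability and in topological subtleties of $\widehat{\mathrm S}$; neither is where the difficulty actually sits. The genuine gap is upstream: without the $\cI$-spacification/derived-section bookkeeping, the object whose sections you propose to take is not the one your twisted cohomology groups are defined from, and the equivalence between the two only holds after the chain of reductions the paper carries out (using that $E$ is positive fibrant, that $p$ is a level equivalence between $\cI$-spaces with cofibrant levels, and that the relevant level maps form commutative diagrams up to canonical homotopy). Your Step~3, in contrast, matches the paper's treatment of multiplicativity: both reduce it to a formal compatibility of $\mathbb R\Gamma$ with the monoidal structure, which the paper handles via \cite[(7.28)]{HSS-retractive}.
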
 

Here we have abused notation by also writing $\tau$ for the composite 
\[X \xrightarrow{\tau} Bs\mathrm{Aut}_\cI(\mathcal K,A)_\bld n \rightarrow  Bs\mathrm{Aut}_\cI(\mathcal K,A)_{h\cI}\]
 of $\tau$ with the canonical map to the homotopy colimit. Using similar abuse, the groups $K_*\Gamma(X,\tau^*\gamma^k_A)$ are all canonically isomorphic once $k \geq n \geq 1$ since the pullback of $\gamma^k_A$ to $Bs\mathrm{Aut}_\cI(\mathcal K,A)_\bld n$ is isomorphic to $\gamma^n_A$ under some identification $\mathcal K^{\otimes n} \cong \mathcal K^{\otimes k}$ (of which there is a path-connected space of choices). The superscripts in the lower row of the diagram are therefore just a convenient way to formulate the isomorphism.

\begin{remark}
We shall make use of the proof expounded in \cite[Theorem 2.14 (c)]{DP-II}. The assumption in loc.cit. that $A$ be strongly selfabsorbing is not used in this part of the argument. However, the condition that $X$ be a finite cell complex (and not just compact) is missing from the statement, but clearly required, as the $K$-theory of $C^*$-algebras is not invariant under weak equivalences (even without any twists involved). The oversight occurs in \cite[Corollary 2.13]{DP-II} as the function spectrum appearing in the displayed equation needs to be derived to make the statement true. In particular, the argument for \cite[Theorem 2.14(c)]{DP-II} remains unaffected by this change.
\end{remark}

\begin{proof}[Proof of Proposition \ref{prop:k-cycles}]
For an $\cI$-space $Y$, it is shown in~\cite[Proposition~7.1]{HSS-retractive} that the functor $\mathbb R (Y \to *)^* \colon \mathrm{Ho}(\Spsym{}) \to \mathrm{Ho}(\Spsym{Y})$ has a right adjoint $\mathbb R \Gamma$ that is represented by forming section spaces in each level in case that $Y$ is a constant $\cI$-space~\cite[Lemma~7.27]{HSS-retractive}. Moreover, we write $\tau_\cI \colon X_\cI \rightarrow Bs\mathrm{Aut}_\cI(\mathcal K,A)$ for the \emph{$\cI$-spacifi\-cation} of $\tau$ discussed for example in~\cite[Proposition 4.2]{Basu_SS_Thom} and recall that by~\cite[Definition~7.10]{HSS-retractive}, we have  
\[K_A^i(X,\tau) = \pi_{-i}(\mathbb R \Gamma\mathbb R \tau_\cI^*(\mathrm{K}_A \sslash s\mathrm{Aut}_\cI(\mathcal K,A))).\]
The $\cI$-spacification comes equipped with a map $p \colon X_\cI \rightarrow \const_\cI X^{\mathrm{fib}}$ for a fibrant replacement of $(X,\tau)$ in the category $\cS/Bs\mathrm{Aut}_\cI(\mathcal K,A)_{h\cI}$. In particular, $X^{\mathrm{fib}}$ can be chosen cofibrant. Similarly, we may assume $X_\cI$ levelwise cofibrant by applying singular complexes without changing $K_A^i(X,\tau)$.
Applying the identification~\cite[(7.29)]{HSS-retractive} to $p$ and abbreviating $\mathrm{K}_A \sslash s\mathrm{Aut}_\cI(\mathcal K,A)$ by $E$ we find for any positive $m$ greater than both $i$ and $n$
\[K_A^i(X,\tau) = \pi_{-i}\mathbb R \Gamma\mathbb R \tau_\cI^*E \cong \pi_{m-i}\Gamma(X^{\mathrm{fib}}, (\mathbb Lp_! \tau_\cI^* E)^{\mathrm{fib}}_\bld m)\]
since $E$ is positive fibrant in the local model structure. 
The space occurring on the right is equivalent to 
\[\Gamma(X_\bld m,p^*(\mathbb Lp_! \tau^*_\cI E)^{\mathrm{fib}}_\bld m) = \Gamma(X_\bld m,(\mathbb Rp^*\mathbb Lp_! \tau^*_\cI E)_\bld m) \simeq \Gamma(X_\bld m,\tau^*_mE_\bld m)\]
since  $p$ is a weak equivalence and $\Gamma$ preserves weak equivalences between Serre-fibrations with cofibrant bases (confer the proof of \cite[Lemma~7.27]{HSS-retractive}). Now consider the diagram
\[\xymatrix@-1pc{X_\bld m \ar[r]^{p_\bld m}\ar[d] & X^{\mathrm{fib}}\ar[d] & X \ar[l]\ar[d]^\tau \\
\overline{Bs\mathrm{Aut}_\cI(\mathcal K,A)}_{\bld m} \ar[rd] \ar[r]& Bs\mathrm{Aut}_\cI(\mathcal K,A)_{h\cI}& \ar[l] Bs\mathrm{Aut}_\cI(\mathcal K,A)_{\bld n} \ar[ld]^{\iota}\\
& Bs\mathrm{Aut}_\cI(\mathcal K,A)_{\bld m} \ar[u]& } \]
whose composition $X_\bld m \rightarrow Bs\mathrm{Aut}_\cI(\mathcal K,A)_{\bld m}$ defines $\tau_\bld m$; here the overline on the middle left term denotes the bar resolution 
\[\overline M(n) = \mathrm{hocolim}_{k \in \cI/n} M(k)\]
occurring in the $\cI$-spacification procedure of \cite[Section 4.1]{Basu_SS_Thom}. The two rectangles are commutative and the triangles commute up to canonical homotopy (for every choice of map $\iota \colon \bld n \rightarrow \bld m$ inducing the right diagonal arrow). Furthermore, all but the upper vertical maps are equivalences. We thus obtain an identification
\[\pi_{m-i}\Gamma(X_\bld m,\tau^*_\bld mE_\bld m) \cong \pi_{m-i}\Gamma(X,\tau^*(\mathrm{K}_A \sslash s\mathrm{Aut}_\cI(\mathcal K,A))_\bld m).\]
To this object the proof of \cite[Theorem 2.7 (c)]{DP-II} applies verbatim to produce an identification with $K_{-i}\Gamma(X,(\iota\tau)^*\gamma^m_{A})$ and by the explanation following the statement of the proposition this is canonically identified with $K_{-i}\Gamma(X,\tau^*\gamma^n_{A})$, independent of the choice of $\iota$.

The second claim is a lengthy, but straightforward diagram chase using the monoidal structure for $\mathbb R \Gamma$ described in~\cite[(7.28)]{HSS-retractive}.
\end{proof}

We assume from here on that $A$ is unital. Then we can also consider the following variant of the spectrum above, see \cite[Definition 4.1]{DP-I}:
	\[(\mathrm K^\infty_A)_\bld n = \mathrm{Hom}(\widehat{\mathrm S}, A^{\otimes n} \otimes \mathcal K(L^2(\mathbb R^n)) \otimes \mathrm{Cl}_n)\]
This spectrum is in fact a commutative symmetric ring spectrum using the same multiplication as above and is acted on by $\mathrm{Aut}^s_{\cI}(A \otimes \cK)$ via post composition. In generalization of the statement for the base fields, the action is easily checked to satisfy the Eckmann--Hilton condition of Definition~\ref{def:EH-for-R}. Let us quickly point out, that $\mathrm K_A^\infty$ does \emph{not} usually model the $K$-theory spectrum $\mathrm{K}_A$ of $A$. 
Rather its homotopy groups are related to the $K$-theory of `$A^{\otimes \infty}$' (though we shall not attempt to make this precise). There is always the comparison zig-zag
\begin{equation}\label{K-comparison}
\mathrm K_{A} \longrightarrow K^{\infty+1}_A \longleftarrow K_A^\infty
\end{equation}
with
\[(\mathrm K^{\infty+1}_A)_\bld n = \mathrm{Hom}(\widehat{\mathrm S}, A^{\otimes n+1} \otimes \mathcal K(L^2(\mathbb R^{n+1})) \otimes \mathrm{Cl}_n),\]
which was denoted $\mathrm{KU}_\bullet^{A,mod}$ in \cite{DP-II}; the left map is given by tensoring with the identity in the remaining factors, while the right hand map depends on a rank one projection in $\mathcal K$ (confer the proof of \cite[Theorem 2.14~(b)]{DP-II}). Unfortunately, it does not seem to be known whether in general the right hand map is a stable equivalence (it is not usually a $\pi_\ast$-isomorphism). 

The case of greatest interest for this variant of $K$-theory spectra is that of strongly selfabsorbing $C^*$-algebras, for which there exists an isomorphism $A \rightarrow A \otimes A$ that is homotopic to either inclusion, see \cite{TW-selfabs} for a precise definition. In this case it is easy to see that \emph{both} maps are $\pi_*$-isomorphisms (see again \cite[Theorem~2.14~(b)]{DP-II}). So in this case $\mathrm K^\infty_A$ does model the $K$-theory of $A$, and is in fact a positive $\Omega$-spectrum as well; the multiplication is quite different in flavor from that on the $K$-theory spectrum $\mathrm K_A$ if A is commutative though (except when $A$ is the base field, in which case both comparison maps are isomorphisms of ring spectra). Investing the main theorem of \cite{DP-I} we find a multiplicative version of \cite[Theorem 2.14 (c)]{DP-II}:

\begin{corollary}\label{tata}
For a strongly selfabsorbing, purely infinite $C^*$-algebra $A$ there is a zig-zag of local equivalences \[\gamma_{\mathrm K_A^\infty} \simeq \mathrm K_A^\infty \sslash \mathrm{Aut}^s_{\cI}(A \otimes \cK)\]
of commutative parametrized ring spectra. Furthermore, for any map $\tau \colon X \rightarrow B\mathrm{Aut}(A \otimes \mathcal K)$, there is a canonical isomorphism $(K^\infty_A)^*(X,\tau) \rightarrow (K_A)^*(X,\tau)$
and for any finite cell complex $X$ the resulting diagram
\[\xymatrix@-1pc{K_A^i(X,\tau) \otimes K_A^i(Y,\sigma) \ar[r]^-\times \ar[d] & K_{A}^{i+j}(X \times Y, \tau + \sigma) \ar[d] \\
             K_{-i}(\Gamma(X,\tau^*\gamma^1_A)) \otimes K_{-i}(\Gamma(Y,\sigma^*\gamma^1_A)) \ar[r]^-\boxtimes& K_i(\Gamma(X \times Y,(\tau \otimes \sigma)^*\gamma^{1}_{A}))}\]
commutes, where we have used a witnessing isomorphism of the strong selfabsorption to identify the lower right hand corner. 
\end{corollary}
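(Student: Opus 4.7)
The plan is to deduce the first assertion directly from Corollary \ref{comparison corollary}, applied to $R = \mathrm K_A^\infty$ and $M = \mathrm{Aut}^s_\cI(A \otimes \cK)$. The hypotheses have all been verified in the preceding discussion: $\mathrm K_A^\infty$ is a positive fibrant commutative symmetric ring spectrum (by the strong selfabsorption of $A$, cf. the remarks before the corollary), $\mathrm{Aut}^s_\cI(A \otimes \cK)$ is $\cI$-commutative and positive fibrant (for the latter, conjugation with an isomorphism $\cK \otimes \cK \cong \cK$ is homotopic to $- \otimes \id_\cK$), the Eckmann--Hilton condition on the action is noted in the text, and grouplikeness holds because we are looking at the full automorphism space. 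Corollary \ref{comparison corollary} then supplies a zig-zag
\[(B^{\boxtimes}(\mu^{\mathrm{cof}}))^*\!(\gamma_{\mathrm K_A^\infty}) \;\longleftarrow\; B^{\barsm}(\bS,\bS^{\cI}_t[M^{\mathrm{cof}}],\mathrm K_A^\infty) \;\longrightarrow\; \mathrm K_A^\infty \sslash \mathrm{Aut}^s_\cI(A \otimes \cK)\]
of local equivalences of commutative parametrized ring spectra over $B^{\boxtimes}(M^{\mathrm{cof}})$. To upgrade this to an equivalence over the base of $\gamma_{\mathrm K_A^\infty}$, I would invoke the main theorem of \cite{DP-I}, which asserts precisely that for purely infinite strongly selfabsorbing $A$ the adjoint $\mu^{\sharp}\colon \mathrm{Aut}^s_\cI(A \otimes \cK) \to \GLoneIof{\mathrm K_A^\infty}$ is an $\cI$-equivalence. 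Hence $B^{\boxtimes}(\mu^{\mathrm{cof}})$ is an $\cI$-equivalence, and the Quillen equivalence $((B^{\boxtimes}(\mu^{\mathrm{cof}}))_!, (B^{\boxtimes}(\mu^{\mathrm{cof}}))^*)$ (cf. \cite[Theorem 1.2]{HSS-retractive}) turns the pullback on the left into an equivalence with $\gamma_{\mathrm K_A^\infty}$ itself.

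For the canonical isomorphism $(\mathrm K_A^\infty)^i(X,\tau) \to \mathrm K_A^i(X,\tau)$, I would exploit the zig-zag \eqref{K-comparison}. As noted immediately after that display, for strongly selfabsorbing $A$ both maps are $\pi_*$-iso\-morphisms. Upon checking that these maps are compatible with the Eckmann--Hilton actions (the middle term $\mathrm K_A^{\infty+1}$ inherits an action via restriction along $\mathrm{Aut}^s_\cI(A \otimes \cK) \to \mathrm{Aut}^s_\cI(A^{\otimes n+1}\otimes \cK, A)$ and the same tensor-with-identity trick used to define \eqref{K-comparison}), the zig-zag extends to one of parametrized commutative ring spectra, inducing the asserted isomorphism on cohomology.

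For the product diagram, I would apply Proposition \ref{prop:k-cycles} to the case $B = A$: this identifies the upper external product $\times$ with the $C^*$-algebraic $\boxtimes$ product on sections of the bundles $\tau^*\gamma^n_A$ and $(\tau \otimes \sigma)^*\gamma^{n+m}_{A\otimes A}$. To land in the target of the diagram one must then compare these with sections of $(\tau \otimes \sigma)^*\gamma^1_A$, which is exactly what the isomorphism $A \otimes A \cong A$ witnessing the strong selfabsorption provides (functorially on $\gamma^1$). The previous paragraph's compatibility statement then closes the diagram.

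The main obstacle is the careful bookkeeping of the action maps and pairings: verifying that the zig-zag \eqref{K-comparison} consists of Eckmann--Hilton-equivariant maps of ring spectra and that the resulting comparison of twisted homology groups intertwines the exterior pairings on $\mathrm K_A^\infty \sslash \mathrm{Aut}^s_\cI(A\otimes\cK)$ defined by tensoring homomorphisms with the external $C^*$-algebraic $K$-theory product via the isomorphism $A \otimes A \cong A$. These verifications are diagram chases made tractable by Proposition \ref{prop:BSSIME-to-E-sslash-M} and the multiplicativity statements in Lemma \ref{lem:BSIMR-to-RsslashM-commutative}, but require the selfabsorption isomorphism to be tracked consistently throughout.
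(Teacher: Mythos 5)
Your proposal follows the same overall architecture as the paper's proof: Corollary~\ref{comparison corollary} plus the main theorem of \cite{DP-I} for the first claim, the zig-zag~\eqref{K-comparison} for the canonical isomorphism, and Proposition~\ref{prop:k-cycles} for the commutativity of the square. However, two steps are missing or imprecise.

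First, Corollary~\ref{comparison corollary} is proved over simplicial sets; to apply it in the topological setting you must pass through Corollary~\ref{cor:top-comparison-cor} and Lemma~\ref{lem: simp-top}, which require the levels of $M$ to be well-based. Since in general $\mathrm{Aut}_\cI(\mathcal K,A)$ is \emph{not} well-pointed (which is exactly why the paper works with $s\mathrm{Aut}_\cI(\mathcal K,A)$ throughout), you need the fact that $\mathrm{Aut}^s_\cI(A\otimes \cK)$ \emph{is} well-pointed for purely infinite strongly selfabsorbing $A$ --- the paper cites \cite[Proposition~2.26]{DP-III} for this. Without checking well-pointedness, the direct invocation of Corollary~\ref{comparison corollary} is not justified.

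Second, you state that the zig-zag \eqref{K-comparison} is compatible with Eckmann--Hilton actions after restriction, but your candidate intermediate group $\mathrm{Aut}^s_\cI(A^{\otimes n+1}\otimes\cK,A)$ does not act compatibly on $K_A^{\infty+1}$ and differs from what is needed. The actual comparison requires a longer chain of five cartesian $\cI$-monoids --- $\mathrm{Aut}_\cI(\cK,A)$, $\mathrm{Aut}_\cI(\cK,A\otimes A)$, $\mathrm{Aut}_\cI(A\otimes\cK,A\otimes\cK)$, $\mathrm{Aut}_\cI(A\otimes\cK,A^{\otimes 2}\otimes\cK^{\otimes 2})$, and $\mathrm{Aut}^s_\cI(A\otimes\cK)$ --- acting on $\mathrm K_A$, $\mathrm K_{A\otimes A}$, $K_A^{\infty+1}$, $K_A^{\infty+2}$, $K_A^\infty$ respectively, assembled into a $3\times 2$ commutative diagram of smash products and multiplications. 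One then needs \cite[Theorem~4.5]{DP-I} to see that the inclusions among these groups are \emph{levelwise} homotopy equivalences (not merely $\cI$-equivalences), so that $-\sslash-$ applied to each column yields level equivalences. This is what identifies the external pairing on $\mathrm K_A\sslash s\mathrm{Aut}_\cI(\cK,A)$ with the internal ring structure on $K_A^\infty\sslash\mathrm{Aut}^s_\cI(A\otimes\cK)$; your sketch acknowledges that this needs doing but does not supply the mechanism.
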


For the notion of purely infinite $C^*$-algebras we refer the reader to \cite[Proposition 1.6]{Cuntz}. Let us only remark that the base field is rightly excluded from being purely infinite.

Specializing to the infinite Cuntz-algebra $\mathcal O_\infty$, whose unit induces an isomorphism $\mathrm{KU} \rightarrow \mathrm K_{\mathcal O_\infty}$ by \cite[Corollary 3.11]{Cuntz}, we find:

\begin{corollary}\label{tata2}
There is a zig-zag of local equivalences
\[\gamma_{\mathrm{KU}} \simeq \mathrm K_{\mathcal O_\infty}^\infty \sslash \mathrm{Aut}^s_{\cI}({\mathcal O_\infty} \otimes \cK)\]
of commutative parametrized ring spectra, lifting Pennig's description of twisted $K$-theory to a multiplicative isomorphism.
\end{corollary}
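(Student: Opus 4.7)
The plan is to specialize Corollary~\ref{tata} to $A = \mathcal{O}_\infty$ and then bridge between $\mathrm K^\infty_{\mathcal{O}_\infty}$ and Joachim's model $\mathrm{KU}$ of complex $K$-theory via the comparison zig-zag \eqref{K-comparison} and Cuntz's identification.

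First, I would verify that $\mathcal{O}_\infty$ satisfies the hypotheses of Corollary~\ref{tata}: it is strongly selfabsorbing (a theorem of R\o rdam/Toms--Winter) and purely infinite (by construction). Applying that corollary directly yields the ``inner" zig-zag
\[ \gamma_{\mathrm K^\infty_{\mathcal{O}_\infty}} \simeq \mathrm K^\infty_{\mathcal{O}_\infty} \sslash \mathrm{Aut}^s_{\cI}(\mathcal{O}_\infty \otimes \mathcal{K}) \]
of commutative parametrized ring spectra, which is half of the desired equivalence.

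Next I would combine the comparison zig-zag \eqref{K-comparison} for $A = \mathcal{O}_\infty$ with the Cuntz equivalence $\mathrm{KU} \xrightarrow{\simeq} \mathrm K_{\mathcal{O}_\infty}$ induced by the unit $\mathbb{C} \hookrightarrow \mathcal{O}_\infty$, producing a chain
\[ \mathrm{KU} \xrightarrow{\simeq} \mathrm K_{\mathcal{O}_\infty} \longrightarrow \mathrm K^{\infty+1}_{\mathcal{O}_\infty} \longleftarrow \mathrm K^\infty_{\mathcal{O}_\infty} \]
in which every map is a $\pi_*$-isomorphism (the right two being $\pi_*$-isomorphisms precisely because $\mathcal{O}_\infty$ is strongly selfabsorbing, as indicated after \eqref{K-comparison}). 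Since $\pi_*$-isomorphisms are the weak equivalences in the positive stable model structure on commutative symmetric ring spectra, and since the functor $R \mapsto \gamma_R = B^\barsm(\bS, \bS^\cI_t[\GLoneIof{R}^{\mathrm{cof}}], R)^{\mathrm{fib}}$ takes weak equivalences of positive fibrant commutative symmetric ring spectra to local equivalences of commutative parametrized ring spectra, I would transport this zig-zag through $\gamma_{(-)}$ to obtain
\[ \gamma_{\mathrm{KU}} \simeq \gamma_{\mathrm K^\infty_{\mathcal{O}_\infty}}, \]
and concatenate with the first step.

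The principal obstacle lies in ensuring that the zig-zag in the second step genuinely consists of maps of commutative symmetric ring spectra, not merely of homotopy ring spectra. The excerpt warns that the multiplications on $\mathrm K_A$ and $\mathrm K_A^\infty$ are ``quite different in flavor" for general strongly selfabsorbing $A$, so one must either construct $E_\infty$-lifts of the maps in \eqref{K-comparison} by hand at the level of symmetric spectra (exploiting that both sides are built from function spectra $\mathrm{Hom}(\widehat{\mathrm S}, -)$ with compatible monoidal structures) or appeal to a rigidity statement for commutative ring spectrum structures on the $\mathrm{KU}$-homotopy type. Given this, and the passage to cofibrant--fibrant replacements required for $\gamma$ to be homotopically meaningful, this is the only step requiring genuine care; the remaining pieces assemble by formal functoriality and the previously established Corollary~\ref{tata}.
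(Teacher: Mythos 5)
Your overall strategy is right: specialize Corollary~\ref{tata} to $A = \mathcal O_\infty$ and then identify $\gamma_{\mathrm{KU}}$ with $\gamma_{\mathrm K^\infty_{\mathcal O_\infty}}$. However, the specific route you choose in the second step — the chain $\mathrm{KU} \to \mathrm K_{\mathcal O_\infty} \to \mathrm K^{\infty+1}_{\mathcal O_\infty} \leftarrow \mathrm K^\infty_{\mathcal O_\infty}$ — introduces a genuine obstruction that you correctly flag but then overestimate. The spectra $\mathrm K_A$ and $\mathrm K^{\infty+1}_A$ are simply \emph{not} commutative symmetric ring spectra for $A = \mathcal O_\infty$: the pairings they carry are \emph{external}, $\mathrm K_A \sm \mathrm K_A \to \mathrm K_{A \otimes A}$, and do not land back in $\mathrm K_A$ unless $A$ is the base field. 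This is precisely why the paper's comparison zig-zag \eqref{K-comparison} is stated only at the level of (module) spectra, with the parenthetical remark that both comparison maps are ring isomorphisms \emph{only} when $A$ is the base field. So there is no hope of "constructing $E_\infty$-lifts of the maps in \eqref{K-comparison}" directly, and appealing to a rigidity theorem for $E_\infty$-structures on the $\mathrm{KU}$-homotopy type would be using a sledgehammer where none is needed.

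The fix is much more elementary, and it avoids $\mathrm K_{\mathcal O_\infty}$ altogether. Since $\mathbb C^{\otimes n} \cong \mathbb C$ canonically, one has $\mathrm{KU} = \mathrm K_{\mathbb C} = \mathrm K^\infty_{\mathbb C}$ as commutative symmetric ring spectra (indeed the comparison maps in \eqref{K-comparison} are identities for $A = \mathbb C$). The functor $A \mapsto \mathrm K^\infty_A$ is functorial in unital $*$-homomorphisms via postcomposition at each level, so the unit $\mathbb C \to \mathcal O_\infty$ induces a map of \emph{commutative symmetric ring spectra}
\[ \mathrm{KU} = \mathrm K^\infty_{\mathbb C} \longrightarrow \mathrm K^\infty_{\mathcal O_\infty}. \]
By Cuntz's theorem the unit $\mathbb C \to \mathcal O_\infty$ is a $\mathrm{KK}$-equivalence, hence so is each $\mathbb C = \mathbb C^{\otimes n} \to \mathcal O_\infty^{\otimes n}$, and the displayed map is therefore a positive level equivalence and in particular a stable equivalence of positive fibrant commutative symmetric ring spectra. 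Applying $\gamma_{(-)}$ and concatenating with the zig-zag $\gamma_{\mathrm K^\infty_{\mathcal O_\infty}} \simeq \mathrm K^\infty_{\mathcal O_\infty} \sslash \mathrm{Aut}^s_{\cI}(\mathcal O_\infty \otimes \mathcal K)$ from Corollary~\ref{tata} (which you invoke correctly, including the verification that $\mathcal O_\infty$ is strongly selfabsorbing and purely infinite) completes the argument without ever leaving the category of commutative parametrized ring spectra.
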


\begin{proof}[Proof of Corollary \ref{tata}]
The first claim follows immediately from Corollary \ref{comparison corollary}, \cite[Theorem 1.1]{DP-I}, which shows that $\mu^\flat \colon \mathrm{Aut}_\cI^s(A \otimes \mathcal K) \rightarrow \GLoneIof(\mathrm K_A^\infty)$ is an $\cI$-equivalence and \cite[Proposition 2.26]{DP-III}, which shows that $\mathrm{Aut}^s_\cI(A \otimes \mathcal K)$ is well-pointed and therefore does not need to be replaced by its simplicial counterpart.

To obtain the other claims consider the comparison zig-zag \eqref{K-comparison}. We find a commutative diagram
\[\xymatrix@-1pc{\mathrm K_{A} \sm \mathrm K_{A} \ar[r] \ar[d] & K^{\infty+1}_A \sm K^{\infty+1}_A \ar[d] & K_A^\infty \sm K_A^\infty \ar[l] \ar[d] \\
\mathrm K_{A \otimes A} \ar[r] & K^{\infty+2}_A & K^{\infty}_A \ar[l]}\]
where the notation is supposed to be self-explanatory. Now the $\cI$-monoid $\mathrm{Aut}_\mathcal I(\mathcal K,A)$ acts on $\mathrm K_A$, $\mathrm{Aut}_\mathcal I(\mathcal K, A \otimes A)$ acts on $\mathrm K_{A \otimes A}$, $\mathrm{Aut}_\mathcal I(A \otimes \mathcal K, A \otimes \mathcal K)$ acts on $K^{\infty+1}_A$, $\mathrm{Aut}_\mathcal I(A \otimes \mathcal K, A^{\otimes 2} \otimes \mathcal K^{\otimes 2})$ acts on $K^{\infty+2}_A$ and finally $\mathrm{Aut}^s_{\cI}(A \otimes \cK)$ on $K^{\infty}_A$. These actions are compatible with the various induced maps and by (the proof of) \cite[Theorem 4.5]{DP-I} all inclusions between them are levelwise homotopy equivalences. Therefore applying $-\sslash-$ to every available term produces the desired zig-zag (even of positive level equivalences) and identifies the internal pairing of the right hand side with the external one on the left.

The claim about the diagram commuting is now immediate from Proposition~\ref{prop:k-cycles}.
\end{proof}

\begin{remark}
It is not clear to us what the analogous statements for the associated homology theory should be in the full generality of Proposition \ref{prop:k-cycles}. In case $\tau$ factors through $Bs\mathrm{Aut}_\cI(\mathcal K)_\bld n \rightarrow Bs\mathrm{Aut}_\cI(\mathcal K,A)_\bld n$, there are multiplicative isomorphisms
\[(K_A)_i(X,\tau) \cong \mathrm{KK}_i(\Gamma((-\tau)^*\gamma^n_\mathbb K),A)\]
as a straightforward generalization of the untwisted case, where 
\[(K_A)_i(X) \cong \mathrm{KK}_i(C(X),A) \cong \mathrm{KK}_i(C(X, \mathcal K),A).\]
In case $A$ is strongly selfabsorbing, and also for the base field $A = \mathbb K$, \cite[Theorem 3.18]{DP-III} implies that
\[(K_A)_i(X,\tau) \cong \mathrm{KK}_i(\Gamma((-\tau)^*\gamma^n_A),A)\]
for arbitrary $\tau \colon X \rightarrow Bs\mathrm{Aut}_\cI(\mathcal K,A)_\bld n$ by a similar reduction as in the proof of Proposition \ref{prop:k-cycles}. Again we expect the isomorphism to be multiplicative. 

However, we do not know of a common generalization of these two descriptions and refrain from spelling them out, as the construction of the isomorphism would require a lengthy detour into $\mathrm{KK}$-theory.
\end{remark}

\subsection{Comparison of geometric and tautological twists of  \texorpdfstring{$K$}{K}-theory}\label{comp-geo-homo} 
We will now use the results above to obtain the comparison between the operator algebraic and homotopically minded spectra representing twisted $K$-theory. As an additional input, we use the Atiyah--Bott--Shapiro orientations
	\[\alpha \colon \mathrm{MSpin} \longrightarrow \mathrm{KO}	\quad \text{and} \quad \alpha^c \colon \mathrm{MSpin}^c \longrightarrow \mathrm{KU}\]
	from \cite[Section 6]{Jo-coherence}.  Recall first that the maps $\alpha$ and $\alpha^c$ are obtained from the unit maps $S^n \rightarrow \mathrm {KO}_n$ or $S^n \rightarrow \mathrm{KU}_n$ (see \cite[Construction 3.4.1]{HeJo}), by extending them over $\mathrm{MSpin}_n = (\mathrm{P}\widehat{\mathcal O}_n)_+ \wedge_{\mathrm O(n)} S^n$ and its complex analogue in the unique $\mathrm{P}\widehat{\mathcal O}_n$- or $\mathrm{P}\widehat{\mathcal U}_n$-equivariant fashion. Also recall that $\mathrm{Aut}_\cI(\mathcal K) = P\mathcal O_\cI$ acts on $\mathrm{KO}$ (and that $P_{S^1}\mathcal U_\cI$ acts on $\mathrm{KU}$) by the constructions of the previous section. This allows us to implement the Donovan--Karoubi map considered in the introduction in the present setup as the map
\[
  \kappa \colon B^{\boxtimes}(\mathrm{P}{\mathcal O}_\cI^{\mathrm{cof}}) \to \BGLoneIcofof{\mathrm{KO}}
  \]
  induced by~\eqref{eq:mu-sharp}, and similarly in the complex case. On the other hand, composing 
  the map of classifying spaces induced by the map $\taut_{\mathrm{Spin}}$ from the construction in Section~\ref{section:taut-twists} with the Atiyah--Bott--Shapiro orientation provides a map
  \[ \tautko\colon B^{\boxtimes}((\mathrm{O}_\cI \sslash \mathrm{Spin}_\cI)^{\mathrm{cof}}) \to  \BGLoneIcofof{\mathrm{KO}} \]
  that implements the inclusion of tautological twists of $K$-theory $\tautko$ in our setup. 
We obtain the identification of $\kappa$ first sketched in \cite[Appendix~C]{HeJo}:
  \begin{theorem}
  The Donovan--Karoubi map $\kappa$  and the inclusion of tautological twists of $K$-theory $\tautko$ are related by a zig-zag of $\cI$-equivalences between commutative $\cI$-space monoids over $\BGLoneIcofof{\mathrm{KO}}$, and analogously in the complex case. In particular, this zig-zag induces an isomorphism between these two maps in the homotopy category of $E_{\infty}$ spaces. 
  \end{theorem}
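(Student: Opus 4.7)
The plan is to simply extend the diagram~\eqref{taut vs DK} one column to the right by composing with the $P\widehat{\mathcal O}_\cI$-equivariant Atiyah--Bott--Shapiro orientation $\alpha\colon \mathrm{MSpin}\to\mathrm{KO}$, and then to apply the bar construction $B^{\boxtimes}$ (after cofibrant replacement).

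More precisely, the key point is that $\alpha$ was defined by extending the unit maps $S^n\to\mathrm{KO}_n$ uniquely in a $P\widehat{\mathcal O}_n$-equivariant way, and in particular it is $P\mathcal O_n$-equivariant for the restricted action coming from the inclusion $P\mathcal O_\cI \hookrightarrow P\widehat{\mathcal O}_\cI$. Thus passing to units induces a map of commutative $\cI$-space monoids $\GLoneIof{\alpha}\colon\GLoneIof{\mathrm{MSpin}}\to\GLoneIof{\mathrm{KO}}$ through which all three vertical arrows on the right of~\eqref{taut vs DK} are compatible. Concretely, I would post-compose each right-hand vertex of~\eqref{taut vs DK} with the corresponding units map induced by $\alpha$ (respectively its variants on the bar-construction spectra appearing in the middle row). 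By naturality of the formation of $\mu^\sharp$ in Lemma~\ref{dp-map}, the bottom row after composition becomes precisely $\mu^\sharp\colon P\mathcal O_\cI\to\GLoneIof{\mathrm{KO}}$, i.e.\ the underlying map of the Donovan--Karoubi map $\kappa$. By the construction of tautological twists in Section~\ref{section:taut-twists}, the top row after composition is the underlying map of $\tautko$.

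The next step is to check that the vertical maps in the enlarged diagram are still level equivalences of commutative $\cI$-space monoids: the left-hand columns are unchanged and handled by Proposition~\ref{prop-good}, while the right-hand columns are the images under $\GLoneI(-)$ of level equivalences between positive $\Omega$-spectra, for which this is standard. Applying a cofibrant replacement in $\cC\cS^{\cI}$ to the whole diagram and then the bar construction $B^{\boxtimes}$ yields a zig-zag of commutative $\cI$-space monoid maps over $\BGLoneIcofof{\mathrm{KO}}$ whose outermost top row is $\tautko$ and whose outermost bottom row is $\kappa$, with all vertical arrows $\cI$-equivalences. Since cofibrant replacement and $B^{\boxtimes}$ both preserve $\cI$-equivalences between suitably cofibrant inputs (by \cite[Proposition~9.9]{Sagave-S_diagram} and properness of the $\cI$-model structure), this is the required zig-zag. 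The complex case is identical, substituting $\mathrm{Spin}^c_\cI$, $\widehat{\mathcal U}_\cI^{\mathrm{ev}}$, $P_{S^1}\mathcal U_\cI$, and $\alpha^c$ throughout.

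The only point requiring care — and what I would call the main obstacle — is to verify that the various identifications used really take place in $\cC\cS^{\cI}$ rather than just in $\cS^{\cI}$. The crucial ingredients are that the multiplicative structure on Joachim's $\mathrm{MSpin}$ comes from the $\cI$-monoid concatenation on $\widehat{\mathcal O}^{\mathrm{ev}}_\cI$ (so that $\alpha$ is strictly multiplicative and $P\widehat{\mathcal O}_\cI$-equivariant simultaneously), that the Eckmann--Hilton condition on the $P\mathcal O_\cI$-action on $\mathrm{KO}$ of the previous section makes $\mu^{\sharp}$ a map of commutative $\cI$-space monoids, and that the tautological construction of Section~\ref{section:taut-twists} is natural in commutative $\cI$-space monoid maps. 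Each of these is already in place, so the proof reduces to drawing the enlarged diagram and invoking Proposition~\ref{prop-good} together with the naturality of cofibrant replacement and $B^{\boxtimes}$.
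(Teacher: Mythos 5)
Your proposal is correct and takes essentially the same approach as the paper: both proofs rest on the diagram~\eqref{taut vs DK} supplemented with the observation that $\kappa$ factors through $\BGLoneIcofof{\mathrm{MSpin}}$ via the $\mathrm{P}\mathcal O_\cI$-equivariance of the Atiyah--Bott--Shapiro orientation, with Proposition~\ref{prop-good} supplying the level equivalences; your ``extend by one column and apply $B^{\boxtimes}$'' is simply a more spelled-out way of packaging the paper's ``$\kappa = \BGLoneIof{(\alpha)} \circ B^{\boxtimes}((\mathrm{proj}^{\ttt})^{\mathrm{cof}})$'' factorization.
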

  \begin{proof}
   Recall that $B^{\boxtimes}(\mathrm{proj}^\ttt) \colon B^{\boxtimes}(\mathrm{P}{\mathcal O}_\cI^{\mathrm{cof}}) \rightarrow \BGLoneIof{(M\mathrm{Spin})^{\mathrm{cof}}}$ is the map associated to the evident action of $\mathrm{P}{\mathcal O}_\cI$ on $\mathrm{MSpin} = \Theta(\widehat{\mathcal O}^{\mathrm{ev}}_\cI \times_{\mathrm{Spin}_\cI} \bS)$.   From \eqref{taut vs DK} we obtain a zig-zag of $\cI$-equivalences between the maps of commutative $\cI$-space monoids $B^{\boxtimes}(\taut_{\mathrm{Spin}}^\mathrm{cof})$ and $B^{\boxtimes}((\mathrm{proj}^\ttt)^\mathrm{cof})$.    
  Since the Atiyah--Bott--Shapiro orientation $\alpha \colon \mathrm{MSpin} \to\mathrm{KO}$ is equivariant for the $P\mathcal O_\cI$-actions, $\kappa$ factors as the composite \[ B((\mathrm{P}{\mathcal O}_\cI)^{\mathrm{cof}}) \xrightarrow{B^{\boxtimes}((\mathrm{proj}^{\ttt})^\mathrm{cof})} \BGLoneIcofof{\mathrm{MSpin}}\xrightarrow{\BGLoneIof{(\alpha)}} \BGLoneIcofof{\mathrm{KO}}.\] 
This implies the claim. 
\end{proof}

We can now prove the identification $(\tautko)^*\gamma_{\mathrm{KO}} \simeq \mathrm{KO} \sslash \mathrm P \mathcal O$ stated in the introduction:
\begin{proof}[Proof of Theorem~\ref{thm:d}]
Corollary~\ref{comparison corollary} provides a zig-zag of local equivalences between commutative parametrized ring spectra relating $\mathrm{KO} \sslash \mathrm P\mathcal O_\cI$ and $\kappa^* \gamma_{\mathrm{KO}}$. The zig-zag of $\cI$-equivalences between $\kappa$ and $\tautko$ established in the previous theorem implies that the parametrized spectra obtained from pulling back along these maps are locally equivalent. This identifies $\kappa^*\gamma_{\mathrm{KO}}$  with $(\tautko)^*\gamma_{\mathrm{KO}}$.   \end{proof}
Combining Theorem~\ref{thm:d} with Proposition \ref{prop:k-cycles} provides the desired multiplicative description of the twisted (co)homology theory associated to $\gamma_\mathrm{KO}$ in terms of $\mathrm{KK}$-theory.

\begin{remark}
To the best of our knowledge such a comparison of operator algebraic and homotopical twisted K-theory has not appeared in the literature before, although it has been variously used, e.g. \cite[Section 6]{Ando-B-G_twists} and \cite[Section 1.1.1 and 1.1.2]{douglas-twist}. As mentioned in the introduction its rudiments were attempted in \cite{AGG-Uniqueness}, where it was shown that the maps \[\mu^\flat \colon B\mathrm{P}{\mathcal O} \rightarrow \BGLoneof{\mathrm{KO}}\quad \text{ and } \quad \BGLoneof{(\alpha)} \circ \taut_{\mathrm{Spin}} \colon B(\mathrm O \sslash \mathrm{Spin}) \rightarrow \BGLoneof{\mathrm{KO}}\] and their complex analogues give homotopic maps on connective covers, i.e., that their images in
\[[K(\bZ/2,2),\BGLoneof{\mathrm{KO}}] \quad \text{and} \quad [K(\bZ,3),\BGLoneof{\mathrm{KU}}]\]
agree, by explicitly determining these groups to be $\bZ/2$ and $\bZ$, respectively. By contrast, our result compares the two maps at the level of Spin bordism spectra, as $\mathrm{E}_\infty$-maps, before taking connective covers and allows for the direct interpretation of homotopically twisted $K$-theory via operator algebras.\\

We also immediately obtain maps
\[\alpha \colon \mathrm{MSpin}\sslash \mathrm P\mathcal O_\cI \longrightarrow \mathrm{KO}\sslash \mathrm P\mathcal O_\cI \quad \text{and} \quad \alpha^c \colon \mathrm{MSpin}^c\sslash P_{S^1}\mathcal U_\cI \longrightarrow \mathrm{KU}\sslash P_{S^1}\mathcal U_\cI,\]
of commutative parametrized ring spectra, geometric incarnations of the  \emph{twisted Atiyah--Bott--Shapiro orientations}. These maps are refinements of the analogous map considered in \cite{HeJo} based on the framework for parametrized homotopy theory of May and Sigurdsson \cite{May-S_parametrized}, which did not allow for multiplicative considerations. By our results above, the left one fits into a commutative square
\[\xymatrix@-1pc{\mathrm{MSpin} \sslash \mathrm P\mathcal O_\cI \ar[rr]^\alpha && \mathrm{KO}\sslash \mathrm P\mathcal O_\cI \\
(B^{\boxtimes}(\taut_{\mathrm{Spin}}^\mathrm{cof}))^*\gamma_{M\mathrm{Spin}} \ar[rr]^-{\gamma_\alpha} \ar[u] && (\tautko)^*\gamma_\mathrm{KO} \ar[u]^\simeq}\]
in the homotopy category of commutative parametrized ring spectra, and analogously for the right one. Given the (untwisted) Atiyah-Bott-Shapiro orientation, the construction of the lower map is entirely homotopical in nature and is also easily carried out in other frameworks for parametrized homotopy theory such as \cite{Ando-B-G-H-R_infinity-Thom}. Theorem \ref{comparison theorem} does not, however, suffice to see that the left vertical map is an isomorphism, since $\mathrm{MSpin}$ is not positively fibrant as a symmetric ring spectrum. As mentioned in Remark~\ref{rem:action-of-F}, we expect a semistability assumption to suffice for its conclusion instead of fibrancy, but the future applications we have in mind are not affected by replacing $\mathrm{MSpin}$ fibrantly. 
\end{remark}
\begin{bibdiv}
\begin{biblist}

\bib{Ando-B-G_twists}{incollection}{
      author={Ando, Matthew},
      author={Blumberg, Andrew~J.},
      author={Gepner, David},
       title={Twists of {K}-theory and {TMF}},
        date={2010},
   booktitle={Superstrings, geometry, topology, and {$C^\ast$}-algebras},
      volume={81},
   publisher={Amer. Math. Soc., Providence, RI},
       pages={27\ndash 63},
         url={http://dx.doi.org/10.1090/pspum/081/2681757},
        note={Proc. Sympos. Pure Math., Amer. Math. Soc., Providence, RI, Vol.
  81},
}

\bib{Ando-B-G_parametrized}{article}{
      author={Ando, Matthew},
      author={Blumberg, Andrew~J.},
      author={Gepner, David},
       title={Parametrized spectra, multiplicative {T}hom spectra, and the
  twisted {U}mkehr map},
        date={2018},
     journal={Geom. Topol.},
      volume={22},
       pages={3761\ndash 3825},
}

\bib{Ando-B-G-H-R_infinity-Thom}{article}{
      author={Ando, Matthew},
      author={Blumberg, Andrew~J.},
      author={Gepner, David},
      author={Hopkins, Michael~J.},
      author={Rezk, Charles},
       title={An {$\infty$}-categorical approach to {$R$}-line bundles,
  {$R$}-module {T}hom spectra, and twisted {$R$}-homology},
        date={2014},
        ISSN={1753-8416},
     journal={J. Topol.},
      volume={7},
      number={3},
       pages={869\ndash 893},
         url={http://dx.doi.org/10.1112/jtopol/jtt035},
}

\bib{AGG-Uniqueness}{article}{
      author={Antieau, Benjamin},
      author={Gepner, David},
      author={G\'omez, Jos\'e~Manuel},
       title={Actions of {$K(\pi,n)$} spaces on {$K$}-theory and uniqueness of
  twisted {$K$}-theory},
        date={2014},
        ISSN={0002-9947},
     journal={Trans. Amer. Math. Soc.},
      volume={366},
      number={7},
       pages={3631\ndash 3648},
         url={https://doi.org/10.1090/S0002-9947-2014-05937-0},
}

\bib{supercat}{article}{
      author={Brundan, Jonathan},
      author={Ellis, Alexander~P.},
       title={Monoidal supercategories},
        date={2017},
        ISSN={0010-3616},
     journal={Comm. Math. Phys.},
      volume={351},
      number={3},
       pages={1045\ndash 1089},
         url={https://doi.org/10.1007/s00220-017-2850-9},
}

\bib{Basu_SS_Thom}{article}{
      author={Basu, Samik},
      author={Sagave, Steffen},
      author={Schlichtkrull, Christian},
       title={{G}eneralized {T}hom spectra and their topological {H}ochschild
  homology},
        date={2020},
        ISSN={1474-7480},
     journal={J. Inst. Math. Jussieu},
      volume={19},
      number={1},
       pages={21\ndash 64},
         url={https://doi.org/10.1017/s1474748017000421},
}

\bib{Cuntz}{article}{
      author={Cuntz, Joachim},
       title={{$K$}-theory for certain {$C^{\ast} $}-algebras},
        date={1981},
        ISSN={0003-486X},
     journal={Ann. of Math. (2)},
      volume={113},
      number={1},
       pages={181\ndash 197},
         url={https://doi.org/10.2307/1971137},
}

\bib{Dixmier-book}{book}{
      author={Dixmier, Jacques},
       title={{$C^*$}-algebras},
   publisher={North-Holland Publishing Co., Amsterdam-New York-Oxford},
        date={1977},
        ISBN={0-7204-0762-1},
        note={Translated from the French by Francis Jellett, North-Holland
  Mathematical Library, Vol. 15},
}

\bib{Donovan}{article}{
      author={Donovan, P.},
      author={Karoubi, M.},
       title={Graded {B}rauer groups and {$K$}-theory with local coefficients},
        date={1970},
        ISSN={0073-8301},
     journal={Inst. Hautes \'{E}tudes Sci. Publ. Math.},
      number={38},
       pages={5\ndash 25},
         url={http://www.numdam.org/item?id=PMIHES_1970__38__5_0},
}

\bib{douglas-twist}{article}{
      author={Douglas, Christopher~L.},
       title={On the twisted {$K$}-homology of simple {L}ie groups},
        date={2006},
        ISSN={0040-9383},
     journal={Topology},
      volume={45},
      number={6},
       pages={955\ndash 988},
         url={https://doi.org/10.1016/j.top.2006.06.007},
}

\bib{DP-I}{article}{
      author={Dadarlat, Marius},
      author={Pennig, Ulrich},
       title={Unit spectra of {K}-theory from strongly self-absorbing
  {$C^*$}-algebras},
        date={2015},
        ISSN={1472-2747},
     journal={Algebr. Geom. Topol.},
      volume={15},
      number={1},
       pages={137\ndash 168},
         url={https://doi.org/10.2140/agt.2015.15.137},
}

\bib{DP-III}{article}{
      author={Dadarlat, Marius},
      author={Pennig, Ulrich},
       title={A {D}ixmier-{D}ouady theory for strongly self-absorbing
  {$C^*$}-algebras},
        date={2016},
        ISSN={0075-4102},
     journal={J. Reine Angew. Math.},
      volume={718},
       pages={153\ndash 181},
         url={https://doi.org/10.1515/crelle-2014-0044},
}

\bib{Dugger_replacing}{article}{
      author={Dugger, Daniel},
       title={Replacing model categories with simplicial ones},
        date={2001},
        ISSN={0002-9947},
     journal={Trans. Amer. Math. Soc.},
      volume={353},
      number={12},
       pages={5003\ndash 5027},
         url={https://doi.org/10.1090/S0002-9947-01-02661-7},
}

\bib{HeJo}{article}{
      author={Hebestreit, Fabian},
      author={Joachim, Michael},
       title={Twisted spin cobordism and positive scalar curvature},
        date={2020},
        ISSN={1753-8416},
     journal={J. Topol.},
      volume={13},
      number={1},
       pages={1\ndash 58},
         url={https://doi.org/10.1112/topo.12122},
}

\bib{Hovey_symmetric-general}{article}{
      author={Hovey, Mark},
       title={Spectra and symmetric spectra in general model categories},
        date={2001},
        ISSN={0022-4049},
     journal={J. Pure Appl. Algebra},
      volume={165},
      number={1},
       pages={63\ndash 127},
         url={http://dx.doi.org/10.1016/S0022-4049(00)00172-9},
}

\bib{HSS-retractive}{article}{
      author={Hebestreit, Fabian},
      author={Sagave, Steffen},
      author={Schlichtkrull, Christian},
       title={Multiplicative parametrized homotopy theory via symmetric spectra
  in retractive spaces},
        date={2020},
     journal={Forum Math. Sigma},
      volume={8},
       pages={e16, 84 pp.},
         url={https://doi.org/10.1017/fms.2020.11},
}

\bib{Jo-kspectra}{article}{
      author={Joachim, Michael},
       title={{$K$}-homology of {$C^\ast$}-categories and symmetric spectra
  representing {$K$}-homology},
        date={2003},
        ISSN={0025-5831},
     journal={Math. Ann.},
      volume={327},
      number={4},
       pages={641\ndash 670},
         url={https://doi.org/10.1007/s00208-003-0426-9},
}

\bib{Jo-coherence}{incollection}{
      author={Joachim, Michael},
       title={Higher coherences for equivariant {$K$}-theory},
        date={2004},
   booktitle={Structured ring spectra},
      series={London Math. Soc. Lecture Note Ser.},
      volume={315},
   publisher={Cambridge Univ. Press, Cambridge},
       pages={87\ndash 114},
         url={https://doi.org/10.1017/CBO9780511529955.006},
        note={London Math. Soc. Lecture Note Ser. \textbf{315}, Cambridge Univ.
  Press},
}

\bib{May-S_parametrized}{book}{
      author={May, J.~P.},
      author={Sigurdsson, J.},
       title={Parametrized homotopy theory},
      series={Mathematical Surveys and Monographs},
   publisher={American Mathematical Society, Providence, RI},
        date={2006},
      volume={132},
        ISBN={978-0-8218-3922-5; 0-8218-3922-5},
         url={http://dx.doi.org/10.1090/surv/132},
}

\bib{DP-II}{article}{
      author={Pennig, Ulrich},
       title={A non-commutative model for higher twisted {$K$}-theory},
        date={2016},
        ISSN={1753-8416},
     journal={J. Topol.},
      volume={9},
      number={1},
       pages={27\ndash 50},
         url={https://doi.org/10.1112/jtopol/jtv033},
}

\bib{RoSt}{incollection}{
      author={Rosenberg, Jonathan},
      author={Stolz, Stephan},
       title={Metrics of positive scalar curvature and connections with
  surgery},
        date={2001},
   booktitle={Surveys on surgery theory, {V}ol. 2},
      series={Ann. of Math. Stud.},
      volume={149},
   publisher={Princeton Univ. Press, Princeton, NJ},
       pages={353\ndash 386},
}

\bib{Schlichtkrull_units}{article}{
      author={Schlichtkrull, Christian},
       title={Units of ring spectra and their traces in algebraic
  {$K$}-theory},
        date={2004},
        ISSN={1465-3060},
     journal={Geom. Topol.},
      volume={8},
       pages={645\ndash 673 (electronic)},
}

\bib{Schlichtkrull_Thom-symmetric}{article}{
      author={Schlichtkrull, Christian},
       title={Thom spectra that are symmetric spectra},
        date={2009},
     journal={Doc. Math.},
      volume={14},
       pages={699\ndash 748},
}

\bib{schwede-global}{book}{
      author={Schwede, Stefan},
       title={Global homotopy theory},
      series={New Mathematical Monographs},
   publisher={Cambridge University Press, Cambridge},
        date={2018},
      volume={34},
        ISBN={978-1-108-42581-0},
         url={https://doi.org/10.1017/9781108349161},
}

\bib{Sagave-S_diagram}{article}{
      author={Sagave, Steffen},
      author={Schlichtkrull, Christian},
       title={Diagram spaces and symmetric spectra},
        date={2012},
        ISSN={0001-8708},
     journal={Adv. Math.},
      volume={231},
      number={3-4},
       pages={2116\ndash 2193},
         url={http://dx.doi.org/10.1016/j.aim.2012.07.013},
}

\bib{Sagave-S_group-compl}{article}{
      author={Sagave, Steffen},
      author={Schlichtkrull, Christian},
       title={Group completion and units in {$\mathcal I$}-spaces},
        date={2013},
        ISSN={1472-2747},
     journal={Algebr. Geom. Topol.},
      volume={13},
      number={2},
       pages={625\ndash 686},
         url={http://dx.doi.org/10.2140/agt.2013.13.625},
}

\bib{Steenrod}{article}{
      author={Steenrod, N.~E.},
       title={Homology with local coefficients},
        date={1943},
        ISSN={0003-486X},
     journal={Ann. of Math. (2)},
      volume={44},
       pages={610\ndash 627},
         url={https://doi.org/10.2307/1969099},
}

\bib{Stolz-conc}{misc}{
      author={Stolz, Stephan},
       title={Concordance classes of positive scalar curvature metrics},
        date={1996},
        note={Preprint, available at
  \url{https://www3.nd.edu/~stolz/preprint.html}},
}

\bib{tDKaPu70}{book}{
      author={tom Dieck, Tammo},
      author={Kamps, Klaus~Heiner},
      author={Puppe, Dieter},
       title={Homotopietheorie},
      series={Lecture Notes in Mathematics, Vol. 157},
   publisher={Springer-Verlag, Berlin-New York},
        date={1970},
}

\bib{TW-selfabs}{article}{
      author={Toms, Andrew~S.},
      author={Winter, Wilhelm},
       title={Strongly self-absorbing {$C^*$}-algebras},
        date={2007},
        ISSN={0002-9947},
     journal={Trans. Amer. Math. Soc.},
      volume={359},
      number={8},
       pages={3999\ndash 4029},
         url={https://doi.org/10.1090/S0002-9947-07-04173-6},
}

\end{biblist}
\end{bibdiv}

\end{document}